\newtheorem{theorem}{Theorem}[section]
\newtheorem{lemma}{Lemma}[section]
\newtheorem{corollary}{Corollary}[section]
\newtheorem{proposition}{Proposition}[section]
\newtheorem{definition}{Definition}
\newtheorem{remark}{Remark}[section]
\numberwithin{equation}{section}
\def\RR{\mathbb R}
\def\CC{\mathbb C}
\def\XX{\mathfrak X}
\title{The Hartogs phenomenon in almost homogeneous varieties}
\author{Sergei~V.~Feklistov}
\date{}
\begin{document}
\maketitle
\begin{abstract}
We study the Hartogs extension phenomenon in noncompact almost homogeneous algebraic varieties and we prove the cohomological and weight criteria for the Hartogs phenomenon. In the case of spherical varieties, we prove a criterion for the Hartogs phenomenon in terms of colored fans.

Bibliography: 30~titles.
\end{abstract}

\footnotetext[0]{This work is supported by the Krasnoyarsk Mathematical Center and financed by the Ministry of
Science and Higher Education of the Russian Federation (Agreement No. 075-02-2022-876).}

\section{Introduction}
\label{s1}

The classical Hartogs extension theorem states that for every domain $W\subset\CC^{n}(n>1)$ and a compact set $K\subset W$ such that $W\setminus K$ is connected, the restriction homomorphism $$H^{0}(W,\mathcal{O})\to H^{0}(W\setminus K, \mathcal{O})$$ is an isomorphism.

A natural question arises if this is true for complex analytic spaces. 

\begin{definition}
We say that a noncompact connected complex analytic space $X$ admits the Hartogs phenomenon if for any domain $W\subset X$ and a compact set $K\subset W$ such that $W\setminus K$ is connected, the restriction homomorphism $$H^{0}(W,\mathcal{O})\to H^{0}(W\setminus K, \mathcal{O})$$ is an isomorphism. 
\end{definition}

In this or a similar formulation this phenomenon has been extensively studied in many situations, including Stein manifolds and spaces, $(n-1)$-complete normal complex spaces and so on \cite{Andersson,AndrGrau,AndrHill,AndrVesen,BanStan,ColtRupp,Harvey,Merker,Rossi,Vassiliadou,Viorel}.

Our goal is to study the Hartogs phenomenon in almost homogeneous algebraic varieties. Let $X$ be a complex analytic variety (i.e. reduced, irreducible complex analytic space) and $G$ be a connected complex Lie group acting holomorphically on $X$. In this situation $X$ is called complex analytic \textit{$G$-variety}. A complex analytic $G$-variety $X$ is called almost homogeneous if $X$ has an open $G$-orbit $\Omega$ \cite{Akh}. 

For example, toric varieties, horospherical varieties, flag varieties (moreover, each spherical variety) are almost homogeneous algebraic varieties. In the context of toric varieties the Hartogs phenomenon was studied in \cite{Fekl, Marci, Marci3}. 

We shall follow an approach that goes back to Serre \cite{Serre}. First we prove the result about cohomology vanishing for some class of complex analytic varieties. 

 	\begin{theorem}
 	\label{th1I}
 	Let $X$ be a noncompact complex analytic variety possessing the following properties: 
	\begin{enumerate}
	\item $X$ admits an open embbeding (not necessary with dense image) $X\hookrightarrow X'$ into a complex analytic variety $X'$ with $H^{1}(X',\mathcal{O})=0$;
	\item $X$ admits a compact exhaustion $\{V_n\}$ with $X\setminus V_n$ being connected sets. 
	\end{enumerate}
 	Then $X$ admits the Hartogs phenomenon if and only if $H^{1}_{c}(X,\mathcal{O})=0$.
	\end{theorem}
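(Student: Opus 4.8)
The plan is to translate the extension problem into local cohomology and read off both implications from the long exact sequence
$\cdots \to H^q_K(Y,\mathcal O)\to H^q(Y,\mathcal O)\to H^q(Y\setminus K,\mathcal O)\xrightarrow{\delta} H^{q+1}_K(Y,\mathcal O)\to\cdots$,
together with the excision isomorphism $H^q_K(W,\mathcal O)\cong H^q_K(X,\mathcal O)$ (valid for any open $W\supseteq K$, since cohomology with support in $K$ depends only on a neighbourhood of $K$) and the identification $H^1_c(X,\mathcal O)=\varinjlim_K H^1_K(X,\mathcal O)=\varinjlim_n H^1_{V_n}(X,\mathcal O)$. For a pair $(W,K)$ a function $f\in H^0(W\setminus K,\mathcal O)$ extends across $K$ precisely when $\delta(f)=0$ in $H^1_K(W,\mathcal O)$, so the entire statement becomes a question about the vanishing of these local groups. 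Injectivity of the restriction map is free throughout: it is the identity theorem on the irreducible variety $W$.

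For the implication $H^1_c(X,\mathcal O)=0\Rightarrow$ Hartogs I would first isolate a purely topological lemma: if $K\subseteq W$ with $W$ a domain and $W\setminus K$ connected, then $X\setminus K$ is already connected. Indeed, any component of $X\setminus K$ missing $W\setminus K$ would lie in $X\setminus W$ yet have topological boundary contained in $K\subseteq W$, which is impossible for a boundary point sitting in the open set $W$; hence $X\setminus K$ has a single component. Granting this, the identity theorem forces $H^0_{K'\setminus K}(X\setminus K,\mathcal O)=0$ for every compact $K'\supseteq K$ (a holomorphic function on the connected set $X\setminus K$ that vanishes on the nonempty open set $X\setminus K'$ is zero), and the exact sequence for the nested supports $K\subseteq K'$ then shows the transition maps $H^1_K(X,\mathcal O)\to H^1_{K'}(X,\mathcal O)$ are injective. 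Consequently $H^1_K(X,\mathcal O)\hookrightarrow H^1_c(X,\mathcal O)=0$, so $H^1_K(W,\mathcal O)\cong H^1_K(X,\mathcal O)=0$, the connecting map $\delta$ lands in the zero group, and $f$ extends. Note that this direction uses only hypothesis (2).

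For the converse I would bring in hypothesis (1). Applying the Hartogs property to the admissible pair $(W,K)=(X,V_n)$---legitimate because $X$ is a domain in itself and $X\setminus V_n$ is connected by (2)---yields surjectivity of $H^0(X,\mathcal O)\to H^0(X\setminus V_n,\mathcal O)$, i.e. $\operatorname{im}\delta=0$, which by exactness means the natural map $a\colon H^1_{V_n}(X,\mathcal O)\to H^1(X,\mathcal O)$ is injective. Independently, hypothesis (1) shows $a=0$: by excision $H^1_{V_n}(X,\mathcal O)\cong H^1_{V_n}(X',\mathcal O)$, and by naturality of ``forget the support'' under the open restriction $X\hookrightarrow X'$ the map $a$ factors through $H^1_{V_n}(X',\mathcal O)\to H^1(X',\mathcal O)=0$. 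An injective zero map has trivial source, so $H^1_{V_n}(X,\mathcal O)=0$ for every $n$, whence $H^1_c(X,\mathcal O)=\varinjlim_n H^1_{V_n}(X,\mathcal O)=0$.

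The step I expect to be the main obstacle is the topological lemma bridging an arbitrary domain $W$ to the whole variety $X$: without it one is tempted to enlarge $K$ to a set with connected complement, but such an enlargement may swallow a relatively compact $W$ and wreck the final identity-theorem comparison; the lemma removes this difficulty by showing that connectedness of $W\setminus K$ is automatically inherited by $X\setminus K$. The remaining work is bookkeeping that is routine in principle but must be verified on a possibly singular reduced space: the local-cohomology long exact sequence, the excision isomorphism, the nested-support sequence, and the commutativity of the square expressing naturality of $H^1_K\to H^1$ under $X\hookrightarrow X'$. All of these are statements about derived functors of sections with support and hold for $\mathcal O$ on any complex space, with the identity theorem supplying the only input special to reduced irreducible varieties.
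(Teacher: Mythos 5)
Your proposal is correct and runs on the same machinery as the paper's proof: the long exact sequence of cohomology with supports, the excision isomorphism $H^1_K(W,\mathcal O)\cong H^1_K(X,\mathcal O)$, and the identification $H^1_c(X,\mathcal O)=\varinjlim_K H^1_K(X,\mathcal O)$; your converse direction is essentially identical to the paper's. The forward direction, though, is organized genuinely differently. The paper's Lemma~\ref{Lemma1} extends a given $f\in H^0(X\setminus K,\mathcal O)$ element by element (choose $K'\supset K$ killing $F_K(f)$, extend from $X\setminus K'$, compare by the uniqueness theorem), which only yields that $R_K$ is an isomorphism on $X$ itself; to obtain the vanishing $H^1_K(W,\mathcal O)=0$ needed for an arbitrary domain $W$, the paper then routes through the ambient $X'$ and invokes $H^1(X',\mathcal O)=0$. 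You instead prove $H^1_K(X,\mathcal O)=0$ outright: the identity theorem on the connected set $X\setminus K$ gives $H^0_{K'\setminus K}(X\setminus K,\mathcal O)=0$, so the transition maps $H^1_K(X,\mathcal O)\to H^1_{K'}(X,\mathcal O)$ are all injective, so $H^1_K(X,\mathcal O)$ injects into $H^1_c(X,\mathcal O)=0$, and excision finishes inside $W$ with no reference to $X'$. This buys a sharper dependency structure: the implication $H^1_c(X,\mathcal O)=0\Rightarrow$ Hartogs needs neither hypothesis (1) nor, in fact, the exhaustion in (2) (only connectedness, local connectedness and noncompactness of $X$), whereas the paper uses $H^1(X',\mathcal O)=0$ in both directions. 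Your explicit argument that $X\setminus K$ is connected also fills in a step the paper merely asserts. One shared caveat: both your argument and the paper's invoke the identity theorem on the connected open sets $X\setminus K$ and $W$, which tacitly requires these sets to be irreducible rather than merely connected; this is automatic once $X$ is normal (the case the paper ultimately uses), so it is not a defect of your proposal relative to the paper.
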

	
We apply Theorem \ref{th1I} to \textit{normal} noncompact complex analytic varieties that possess some properties related to their compactifications. First we give the following definition.

\begin{definition}\label{maindef}
A noncompact complex analytic variety $X$ is called \textit{$(b,\sigma)$-com\-pa\-cti\-fiable} if it admits a compactification $X'$ with the following properties:
\begin{enumerate}
\item $X'$ is a compact complex analytic variety;
\item $X'\setminus X$ is a proper analytic subset and it has $b$ connected components;
\item $\dim_{\CC} H^{1}(X',\mathcal{O})=\sigma$.
\end{enumerate} 
If $X$ is normal (resp. algebraic), then also we require $X'$ to be normal (resp. algebraic). If $X$ is a $G$-variety, then we requre $X'$ to be a $G$-variety and the compactification map to be $G$-equivariant. 
\end{definition}

Note that the number $\sigma$ is called irregularity of $X'$ (see \cite{Hart} in the context of projective surfaces). The number $b$ is related to the number $e(X)$ of topological ends of $X$ (see \cite{Pesch} about topological ends and \cite{Viorel} about the relation with the Hartogs phenomenon). Namely, we have $b\leq e(X)$.
 
In this paper we consider only the case $b=1$ and $\sigma=0$. This means that $X'\setminus X$ is connected and $H^{1}(X',\mathcal{O})=0$. 

For a (1,0)-compactifiable complex analytic variety $X$ we have the canonical isomorphism $$H^{1}_{c}(X,\mathcal{O})=H^{0}(Z,i^{-1}\mathcal{O})/\CC,$$ where $Z=X'\setminus X$ and $i\colon Z\hookrightarrow X'$ is the closed embedding. It follows that we have to study the space $H^{0}(Z,i^{-1}\mathcal{O})$.
	
Now let $X$ be a \textit{normal} (1,0)-compactifiable \textit{almost homogeneous} \textit{algebraic} $G$-variety, where $G$ is acting algebraically on $X$, let $X'$ be a corresponding compactification of $X$. 

Denote by $\mathcal{G}(X')$ the set of $G$-stable prime divisors on $X'$ and define $$Y:=X'\setminus \bigcup\limits_{D\in\mathcal{G}(X'),D\subset X} D.$$ 

Note that the set $\{D\in\mathcal{G}(X')\mid D\subset X\}$ may be the empty set. 

Let $G$ be a connected complex \textit{reductive} Lie group, $B\subset G$ be a Borel subgroup, $T\subset B$ be a maximal algebraic torus with the character lattice $\XX(T)$, the 1-parameter subgroup lattice $\XX^{*}(T):=Hom(\XX(T),\mathbb{Z})$ and the set of dominant characters $\XX_{+}(T)$. 

In Proposition \ref{Prop5} we prove that the algebra of regular functions $\CC[Y]$ is a dense subspace of the topological vector space $H^{0}(Z,i^{-1}\mathcal{O})$ (with the direct limit topology). It follows that $H^{1}_{c}(X,\mathcal{O})=0$ if and only if $\CC[Y]=\mathbb{C}$. 

Since $G$ acts algebraically on $Y$, it follows that $\CC[Y]$ is a representation of $G$. Now define the weight monoid of $Y$ as $$\Lambda_{+}(Y):=\{\lambda\in \XX_{+}(T)\mid \CC[Y]_{\lambda}^{(B)}\neq 0\}$$
where $\CC[Y]_{\lambda}^{(B)}:=\{f\in\CC[Y]\mid \exists\lambda\in\XX(T): b.f=\lambda(b)f,\forall b\in B\}$. 

We obtain the following weight criterion.
\begin{theorem}
Let $G$ be a connected complex reductive Lie group, and let $X$ be a normal (1,0)-compactifiable almost homogeneous algebraic $G$-variety. Then $X$ admits the Hartogs phenomenon if and only if $\Lambda_{+}(Y)=0$ and $\CC[Y]^{B}=\CC$.
\end{theorem}

In this paper we consider so-called spherical $G$-varieties. They are almost homogeneous algebraic $G$-varieties where $G$ is a complex reductive group and a Borel subgroup $B\subset G$ acts on $X$ with an open orbit. 

Let $X$ be a spherical $G$-variety with an open $G$-orbit $\Omega$. Let us define the weight lattice as $$M:=\{\lambda\in \XX(T)\mid \CC(\Omega)^{(B)}_{\lambda}\neq0\}.$$ 
Note that $M$ is a sublattice in the character lattice $\XX(T)$. Let $M_{\RR}:=M\otimes\RR$. Denote by $\mathcal{B}(Y)$ the set of all $B$-stable prime divisors on $Y$. 

Each $B$-stable divisor $D\in \mathcal{B}(Y)$ defines the discrete valuation $$v_{D}\colon \CC(\Omega)\setminus\{0\}\to \mathbb{Z}.$$ Recall that $v_{D}(f)$ is the order of zeros or poles of $f$ at $D$. Also the valuation $v_{D}$ defines a point in the dual weight lattice $a_{D}\in N:=Hom(M,\mathbb{Z})$ by setting $\langle a_{D},\lambda\rangle:=v_{D}(f)$ for $f\in \CC(\Omega)^{(B)}_{\lambda}$.

Define the following cone in the space $N_{\RR}$: 
$$C:=\mathbb{R}_{\geq 0}\big\langle a_{D}\mid D\in \mathcal{B}(Y)\big\rangle.$$

We have the following convex geometric criterion of the Hartogs phenomenon.

\begin{corollary}
Let $X$ be a (1,0)-compactifiable spherical variety. Then $X$ admits the Hartogs phenomenon if and only if $C=N_{\RR}$.
\end{corollary}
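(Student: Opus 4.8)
The plan is to deduce the Corollary from the preceding weight criterion by translating the two algebraic conditions $\Lambda_+(Y)=0$ and $\CC[Y]^B=\CC$ into the single convex-geometric statement $C=N_\RR$. The key observation is that for a spherical variety everything is governed by the combinatorics of $B$-semiinvariant rational functions, so I would work entirely inside $\CC(\Omega)^{(B)}$ and its weight decomposition.

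First I would recall the standard description of regular $B$-semiinvariants on the quasi-affine variety $Y$. For a spherical $G$-variety, multiplicity-freeness gives $\dim \CC(\Omega)^{(B)}_\lambda \le 1$ for each $\lambda \in M$, and a function $f_\lambda \in \CC(\Omega)^{(B)}_\lambda$ is regular on $Y$ exactly when it has no poles along any $B$-stable prime divisor of $Y$, i.e. when $v_D(f_\lambda)\ge 0$ for all $D\in\mathcal{B}(Y)$. Unwinding the definition $\langle a_D,\lambda\rangle = v_D(f_\lambda)$, this says precisely that $\lambda$ lies in the dual cone $C^\vee = \{\lambda\in M_\RR \mid \langle a_D,\lambda\rangle\ge 0 \ \forall D\}$. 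Hence the weight set of the $B$-algebra $\CC[Y]$ is exactly $C^\vee\cap M$, and so $\CC[Y] = \bigoplus_{\lambda\in C^\vee\cap M}\CC f_\lambda$ as a $B$-module (after fixing generators $f_\lambda$).

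With this dictionary in hand, I would translate the two conditions of the weight criterion. The condition $\CC[Y]^B=\CC$ means the only $B$-invariant weight is $\lambda=0$, i.e. $C^\vee\cap M$ meets the kernel $M^B$ only in $0$; since $X$ is spherical the generic isotropy has no nonconstant $B$-invariants, so this reduces to $C^\vee$ containing no nonzero lattice point with $\lambda=0$-type, which I expect to be automatic or to collapse into the same cone condition. The condition $\Lambda_+(Y)=0$ says that every $B$-semiinvariant in $\CC[Y]$ is in fact $G$-invariant, hence constant; combined with the previous point this forces $\CC[Y]=\CC$, which by the weight criterion is equivalent to the Hartogs phenomenon. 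The crucial step is to see that $\CC[Y]=\CC$ is equivalent to $C^\vee\cap M = \{0\}$, i.e. the dual cone is trivial. Dualizing, $C^\vee = \{0\}$ holds if and only if $C=N_\RR$: a convex cone $C$ in $N_\RR$ equals the whole space precisely when its dual cone degenerates to the origin. This is the standard fact that $(C^\vee)=\{0\}\iff \overline{C}=N_\RR$, and $C$ is already closed as the conical hull of finitely many points $a_D$ (since $\mathcal{B}(Y)$ is finite by normality and the spherical structure theory).

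The main obstacle I anticipate is the careful handling of the two separate conditions $\Lambda_+(Y)=0$ and $\CC[Y]^B=\CC$ versus the single condition $\CC[Y]=\CC$: I must verify that for spherical varieties these two conditions together are genuinely equivalent to $\CC[Y]=\CC$, and that neither is redundant nor hides an extra constraint. Concretely, I need the algebraic fact that a $B$-semiinvariant regular function whose weight $\lambda$ lies in $M$ but is $B$-invariant (weight $0$) must already be $G$-invariant, together with the sphericity-driven statement $\CC(\Omega)^G=\CC$, to conclude that $C^\vee\cap M=\{0\}$ captures both conditions simultaneously. Once the identification $\CC[Y]=\bigoplus_{\lambda\in C^\vee\cap M}\CC f_\lambda$ is established and the polyhedral duality $C^\vee=\{0\}\iff C=N_\RR$ is invoked, the equivalence with the Hartogs phenomenon follows immediately from the weight criterion.
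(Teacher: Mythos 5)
Your proposal is correct and takes essentially the same route as the paper: the paper's intermediate lemma identifies the weight monoid of $\CC[Y]$ with $L\cap M$ where $L=C^{\vee}$, concludes that the Hartogs phenomenon holds iff $L=0$, and then dualizes to $C=N_{\RR}$, exactly as you do. The two points you flag as possible obstacles close just as you expect: $\CC[Y]^{B}=\CC$ is automatic from sphericity since $\CC[Y]^{B}\subset\CC(\Omega)^{B}=\CC$, and $C^{\vee}\cap M=\{0\}$ forces $C^{\vee}=\{0\}$ because $C^{\vee}$ is a rational polyhedral cone.
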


Note that this criterion can also be formulated in terms of colored fans. Briefly, each spherical variety with the open $G$-orbit $\Omega$ is encoded by a colored fan --- a collection of strictly convex cones in a real vector space with the common apex that may intersect only along their common faces. For more details see Section \ref{s6} in this paper or \cite{Gandini, Tim}. 

Let $X_{\Sigma}$ be a spherical variety with a colored fan $\Sigma$. For $\Sigma$ we denote by $|\Sigma|$ the support of $\Sigma$ and denote by $\overline{N_{\RR}\setminus |\Sigma|}$ the closure of $N_{\RR}\setminus |\Sigma|$ in $N_{\RR}$. 

By $\mathcal{V}(\Omega)$ we denote the finitely generated convex rational cone in $N_{\mathbb{Q}}=N\otimes\mathbb{Q}$ of all $G$-invariant valuations (see \cite[Sections 4, 10]{Gandini} or Section \ref{s6} in this paper), and by $\mathcal{V}_{\mathbb{R}}(\Omega)$ we denote the cone generated by the set $\mathcal{V}(\Omega)$ in $N_{\mathbb{R}}$. By $\mathcal{B}(\Omega)$ we denote the set of all prime $B$-stable divisors of $\Omega$. 

Let us note that a noncompact spherical variety $X_{\Sigma}$ is  (1,0)-compactifiable if and only if $\mathcal{V}_{\mathbb{R}}(\Omega)\setminus |\Sigma|$ is a connected set (see Lemma \ref{lemconnect}). We have the following main result.

\begin{theorem}
Let $X_{\Sigma}$ be a noncompact spherical $G$-variety with the open $G$-orbit $\Omega$ and with the colored fan $\Sigma$ such that $\mathcal{V}_{\mathbb{R}}(\Omega)\setminus |\Sigma|$ is a connected set. Then $X_{\Sigma}$ admits the Hartogs phenomenon if and only if $$\mathbb{R}_{\geq 0}\langle\overline{\mathcal{V}_{\mathbb{R}}(\Omega)\setminus |\Sigma|}\cup \{a_{D}\mid D\in \mathcal{B}(\Omega)\}\rangle=N_\mathbb{R}.$$
\end{theorem}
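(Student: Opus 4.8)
The plan is to deduce this statement from the convex geometric criterion proved above (the Corollary), namely that a $(1,0)$-compactifiable spherical variety $X$ admits the Hartogs phenomenon if and only if
$$C:=\RR_{\geq 0}\big\langle a_{D}\mid D\in\mathcal{B}(Y)\big\rangle=N_{\RR}.$$
Since the hypothesis that $\mathcal{V}_{\RR}(\Omega)\setminus|\Sigma|$ is connected guarantees, via Lemma \ref{lemconnect}, that $X_{\Sigma}$ is indeed $(1,0)$-compactifiable, this criterion applies and the theorem reduces to the purely combinatorial identity
$$\RR_{\geq 0}\big\langle a_{D}\mid D\in\mathcal{B}(Y)\big\rangle=\RR_{\geq 0}\big\langle\,\overline{\mathcal{V}_{\RR}(\Omega)\setminus|\Sigma|}\cup\{a_{D}\mid D\in\mathcal{B}(\Omega)\}\,\big\rangle.$$
So first I would fix a $G$-equivariant completion $X'=X_{\Sigma'}$ with $\Sigma\subseteq\Sigma'$ a complete colored fan (so that its colored cones cover $\mathcal{V}_{\RR}(\Omega)$), observe that $Y$ is again a $G$-stable open subvariety of $X'$ containing $\Omega$, hence a spherical embedding of $\Omega$, and invoke the Luna--Vust dictionary: $\mathcal{B}(Y)$ splits as the colors of $\Omega$ together with the $G$-stable prime divisors of $Y$.

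The next step is to read off these two families from the fan. Every color $D\in\mathcal{B}(\Omega)$ has its generic point in $\Omega$, which is untouched by the removal of $G$-stable divisors, so all colors survive in $Y$ and contribute exactly the points $\{a_{D}\mid D\in\mathcal{B}(\Omega)\}$. For the $G$-stable part I would use that removing $\bigcup_{D\subset X}D$ deletes precisely those $G$-stable divisors $D_{\rho}$ whose ray $\rho$ is \emph{interior}, in the sense that every colored cone of $\Sigma'$ having $\rho$ as a face already lies in $\Sigma$; the surviving $G$-stable divisors of $Y$ are then the $D_{\rho}$ whose color-free ray $\rho\subseteq\mathcal{V}_{\RR}(\Omega)$ lies in the closure of the complement of $|\Sigma|$. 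Their generators $a_{D_{\rho}}$ are exactly the rays of the cones of $\Sigma'$ that cover $\overline{\mathcal{V}_{\RR}(\Omega)\setminus|\Sigma|}$. Since each such cone is generated by its rays and is contained in $\overline{\mathcal{V}_{\RR}(\Omega)\setminus|\Sigma|}$, while conversely this closed set is a union of such cones, the $\RR_{\geq 0}$-span of these generators equals $\RR_{\geq 0}\langle\overline{\mathcal{V}_{\RR}(\Omega)\setminus|\Sigma|}\rangle$. Combining the two families yields the displayed identity, and the equation $C=N_{\RR}$ becomes the asserted condition.

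I expect the main obstacle to be this last combinatorial step, and in particular showing that the resulting cone does not depend on the auxiliary completion $\Sigma'$. Different completions introduce different boundary rays, so I must argue that the $\RR_{\geq 0}$-span is always $\RR_{\geq 0}\langle\overline{\mathcal{V}_{\RR}(\Omega)\setminus|\Sigma|}\rangle$: this rests on the fact that the cones of any completion covering $\mathcal{V}_{\RR}(\Omega)$ subdivide $\overline{\mathcal{V}_{\RR}(\Omega)\setminus|\Sigma|}$ into subcones meeting only along common faces, so that the span of their rays is intrinsic to $\Sigma$. A related delicate point is to control colored cones of $\Sigma'$ whose support meets $\mathcal{V}_{\RR}(\Omega)\setminus|\Sigma|$ but which carry colors; here I must verify that the color-free ($G$-stable) rays already generate the relevant closure, so that the term $\{a_{D}\mid D\in\mathcal{B}(\Omega)\}$ accounts for all color contributions and nothing is double counted or omitted. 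Once independence of $\Sigma'$ and the correct bookkeeping of colors are secured, the equality of cones, and hence the theorem, follows.
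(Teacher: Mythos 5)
Your proposal is correct and follows essentially the same route as the paper: reduce via Lemma \ref{lemconnect} and Corollary \ref{cormain12} to the identity $C=\mathbb{R}_{\geq 0}\langle\overline{\mathcal{V}_{\mathbb{R}}(\Omega)\setminus |\Sigma|}\cup \{a_{D}\mid D\in \mathcal{B}(\Omega)\}\rangle$, and prove it by identifying, through the Luna--Vust dictionary for a completion $\Sigma'$, which $G$-stable divisors survive in $Y$ and observing that their rays together with the colors span the closure of $\mathcal{V}_{\mathbb{R}}(\Omega)\setminus|\Sigma|$. The only presentational difference is that the paper packages this bookkeeping by passing to the codimension-$\geq 2$ modification $\widehat{Y}$ and its colored fan $\Sigma_{\widehat{Y}}$ (Lemmas \ref{lemmaonY}--\ref{lemmaonC}), while you argue ray by ray; the independence of the auxiliary completion that worries you is delivered automatically by the equality itself, since the right-hand side is intrinsic to $\Sigma$ and $\Omega$.
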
  

In particular, let $X_{\Sigma}$ be a noncompact horospherical variety with the open $G$-orbit $\Omega$. Let $U$ is the unipotent radical of the Borel subgroup $B$, $S$ be the set of simple roots with respect to $B$ and $S^{\vee}$ be the set of dual simple roots. Let $H$ be the stabilizer of any point $o\in\Omega$ such that $H\supset U^{-}$. Consider a parabolic subgroup $P\supset B$ such that $P^{-}=N_{G}(H)$. Note that parabolic subgroups containing a given Borel subgroup $B$ are parametrized by subsets of simple roots $I\subset S$. Let $I$ be a subset of $S$ which corresponds to the parabolic subgroup $P$.

The injective map $\iota\colon M_\RR\hookrightarrow \XX(T)\otimes\RR$ induces the surjective map $$\iota^{*}\colon \XX^{*}(T)\otimes\RR\twoheadrightarrow N_{\RR}:=N\otimes\RR.$$ 

We obtain the following corollary.

\begin{corollary}
Let $X_{\Sigma}$ be a noncompact horospherical $G$-variety with the open $G$-orbit $\Omega$ and with the colored fan $\Sigma$ such that $N_{\mathbb{R}}\setminus |\Sigma|$ is a connected set. Then $X_{\Sigma}$ admits the Hartogs phenomenon if and only if $$\mathbb{R}_{\geq 0}\langle\overline{N_{\mathbb{R}}\setminus |\Sigma|}\cup \iota^{*}((S\setminus I)^{\vee})\rangle=N_\mathbb{R}$$ where $(S\setminus I)^{\vee}=\{\alpha^{\vee}\mid\alpha\in S\setminus I\}$.
\end{corollary}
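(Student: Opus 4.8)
The plan is to obtain this corollary by specializing the preceding spherical theorem to the horospherical situation, where the combinatorial data $\mathcal{V}_{\mathbb{R}}(\Omega)$ and $\{a_{D}\mid D\in\mathcal{B}(\Omega)\}$ admit an explicit description. Two structural facts about horospherical homogeneous spaces are the essential inputs, and I would establish (or cite from the Luna--Vust theory, e.g. \cite{Gandini,Tim}) each of them before performing the substitution.

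First I would recall the characterization of horosphericity through the valuation cone: a spherical homogeneous space $\Omega=G/H$ is horospherical if and only if every $G$-invariant valuation of $\mathbb{C}(\Omega)$ is central, equivalently the cone $\mathcal{V}(\Omega)$ of $G$-invariant valuations is the entire space $N_{\mathbb{Q}}$. Consequently $\mathcal{V}_{\mathbb{R}}(\Omega)=N_{\mathbb{R}}$. Under this identification the hypothesis of the theorem, that $\mathcal{V}_{\mathbb{R}}(\Omega)\setminus|\Sigma|$ be connected, becomes precisely the stated hypothesis that $N_{\mathbb{R}}\setminus|\Sigma|$ be connected, and the closure $\overline{\mathcal{V}_{\mathbb{R}}(\Omega)\setminus|\Sigma|}$ in the criterion becomes $\overline{N_{\mathbb{R}}\setminus|\Sigma|}$.

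Second I would identify the colors of $\Omega$ and compute their images under the color map $D\mapsto a_{D}$. Since $\Omega=G/H$ is homogeneous it has no $G$-stable prime divisors, so $\mathcal{B}(\Omega)$ consists exactly of the colors. With $H\supset U^{-}$, $P^{-}=N_{G}(H)$, and $I\subset S$ the subset of simple roots indexing $P$, the standard theory provides a bijection between $\mathcal{B}(\Omega)$ and the set $S\setminus I$; writing $D_{\alpha}$ for the color attached to $\alpha\in S\setminus I$, the associated point $a_{D_{\alpha}}\in N$ equals the restriction of the coroot $\alpha^{\vee}$ to the weight lattice $M$. As $\iota\colon M_{\RR}\hookrightarrow\XX(T)\otimes\RR$ is the inclusion of the weight lattice and $\iota^{*}$ is its dual (the restriction map), this restriction is exactly $\iota^{*}(\alpha^{\vee})$, whence $\{a_{D}\mid D\in\mathcal{B}(\Omega)\}=\iota^{*}((S\setminus I)^{\vee})$.

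Finally, substituting the two identifications $\mathcal{V}_{\mathbb{R}}(\Omega)=N_{\mathbb{R}}$ and $\{a_{D}\mid D\in\mathcal{B}(\Omega)\}=\iota^{*}((S\setminus I)^{\vee})$ into the criterion $$\mathbb{R}_{\geq 0}\langle\overline{\mathcal{V}_{\mathbb{R}}(\Omega)\setminus|\Sigma|}\cup\{a_{D}\mid D\in\mathcal{B}(\Omega)\}\rangle=N_{\mathbb{R}}$$ of the theorem yields the asserted equivalence immediately. The main obstacle is not the substitution itself but the correct verification of the two structural facts and, above all, the bookkeeping that matches $a_{D_{\alpha}}$ with $\iota^{*}(\alpha^{\vee})$ under the canonical identifications $N=\mathrm{Hom}(M,\mathbb{Z})$ and $\iota^{*}\colon\XX^{*}(T)\otimes\RR\twoheadrightarrow N_{\RR}$; concretely one must check that the valuation $v_{D_{\alpha}}$ pairs with each $B$-eigenweight $\lambda\in M$ exactly as $\langle\alpha^{\vee},\lambda\rangle$, which is precisely where the horospherical normalization $H\supset U^{-}$ is used.
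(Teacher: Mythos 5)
Your proposal is correct and matches the paper's own route exactly: the paper likewise deduces this corollary by substituting into Theorem \ref{ThmC} the two facts recorded in Remark \ref{rem2} (citing \cite[Section 28.1]{Tim}), namely that $\mathcal{V}(\Omega)=N_{\mathbb{Q}}$ for horospherical $\Omega$ and that the $B$-stable divisors of $\Omega$ are the $D_{\alpha}$, $\alpha\in S\setminus I$, with $a_{D_{\alpha}}=\iota^{*}(\alpha^{\vee})$. No gaps; your added remark that the normalization $H\supset U^{-}$ is where the pairing $\langle a_{D_{\alpha}},\lambda\rangle=\langle\alpha^{\vee},\lambda\rangle$ must be checked is a fair point of care, though the paper delegates it to the cited reference.
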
 

\section{Cohomological criterion for the Hartogs phenomenon}
\label{s2}

We consider only reduced, irreducible complex analytic spaces (i.e. complex analytic \textit{varieties}). 

J.-P. Serre proved the Hartogs phenomenon for Stein manifolds by using triviality of the cohomology group with compact supports $H^{1}_{c}(X,\mathcal{O})$ where $\mathcal{O}$ is the sheaf of holomorphic functions \cite{Serre}. For more details about of the cohomology group with compact supports of sheaves see \cite{BanStan}.

We consider a class of complex analytic varieties such that for this class the triviality of the group $H^{1}_{c}(X,\mathcal{O})$ is a necessary and sufficient condition for the Hartogs phenomenon. 

 \begin{theorem}
 	\label{th1}
 	Let $X$ be a noncompact complex analytic variety with the following properties: 
	\begin{enumerate}
	\item $X$ admits an open embbeding (not necessary with dense image) $X\hookrightarrow X'$ into a complex analytic variety $X'$ with $H^{1}(X',\mathcal{O})=0$;
	\item $X$ admits a compact exhaustion $\{V_n\}$ with $X\setminus V_n$ being connected sets.  
	\end{enumerate}
 	Then $X$ admits the Hartogs phenomenon if and only if $H^{1}_{c}(X,\mathcal{O})=0$.
\end{theorem}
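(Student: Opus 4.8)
The plan is to translate the Hartogs property into the vanishing of a connecting homomorphism in local cohomology, to recognise $H^{1}_{c}(X,\mathcal{O})$ as the direct limit of these local groups, and then to play the two hypotheses against one another: the embedding $X\hookrightarrow X'$ controls $H^{1}$, while the connected exhaustion controls the transition maps of the limit. First I would reduce the statement to a cohomological one. For a domain $W$ and a compact $K\subset W$ with $W\setminus K$ connected and nonempty, injectivity of $H^{0}(W,\mathcal{O})\to H^{0}(W\setminus K,\mathcal{O})$ is automatic by the identity theorem, since $W\setminus K$ is a nonempty open subset of the irreducible variety $W$. The exact sequence of local cohomology
$$0\to H^{0}(W,\mathcal{O})\to H^{0}(W\setminus K,\mathcal{O})\xrightarrow{\ \delta\ }H^{1}_{K}(W,\mathcal{O})\to H^{1}(W,\mathcal{O})$$
(using $H^{0}_{K}(W,\mathcal{O})=0$) shows that a given $g$ extends if and only if $\delta(g)=0$, and excision identifies $H^{1}_{K}(W,\mathcal{O})\cong H^{1}_{K}(X,\mathcal{O})$ because $K$ is compact and $W$ is open in $X$. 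Finally I would record that $H^{1}_{c}(X,\mathcal{O})=\varinjlim_{K}H^{1}_{K}(X,\mathcal{O})$, the limit being cofinally computed along $\{V_{n}\}$.

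For the implication $H^{1}_{c}(X,\mathcal{O})=0\Rightarrow$ Hartogs, the engine is injectivity of the canonical map $H^{1}_{K}(X,\mathcal{O})\to H^{1}_{c}(X,\mathcal{O})$. For $K\subseteq L$ the kernel of the transition map $H^{1}_{K}(X,\mathcal{O})\to H^{1}_{L}(X,\mathcal{O})$ is the image of $H^{0}_{L\setminus K}(X\setminus K,\mathcal{O})$; but a holomorphic function on the connected variety $X\setminus K$ whose support lies in the compact set $L\setminus K$ must vanish by the identity theorem, so this group is $0$ and the transition maps are injective. Hence $H^{1}_{K}(X,\mathcal{O})\hookrightarrow H^{1}_{c}(X,\mathcal{O})=0$, so $\delta(g)=0$ and $g$ extends. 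To run this for an arbitrary domain $W$ I would pass to the connected component of $X\setminus K$ containing $W\setminus K$ and enlarge $K$ inside the cofinal exhaustion $\{V_{n}\}$, for which $X\setminus V_{n}$ is connected, so as to land in a situation where the identity theorem applies to the full complement.

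For the converse, Hartogs $\Rightarrow H^{1}_{c}(X,\mathcal{O})=0$, I would use both hypotheses simultaneously. Excision gives $H^{1}_{V_{n}}(X,\mathcal{O})\cong H^{1}_{V_{n}}(X',\mathcal{O})$, and since $H^{1}(X',\mathcal{O})=0$ the same exact sequence, now written on $X'$, yields a surjection $H^{0}(X'\setminus V_{n},\mathcal{O})\twoheadrightarrow H^{1}_{V_{n}}(X',\mathcal{O})$. Given a representative $\hat{g}\in H^{0}(X'\setminus V_{n},\mathcal{O})$, I restrict it to $X\setminus V_{n}$ and apply the Hartogs phenomenon with $W=X$ and $K=V_{n}$, which is legitimate precisely because $X\setminus V_{n}$ is connected, obtaining an extension $\tilde{g}\in H^{0}(X,\mathcal{O})$. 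Since $\tilde{g}$ and $\hat{g}$ agree on the overlap $X\setminus V_{n}$ of the open cover $\{X,\ X'\setminus V_{n}\}$ of $X'$, they glue to a section of $\mathcal{O}$ on $X'$; thus $\hat{g}$ extends across $V_{n}$ and its class is zero. Therefore $H^{1}_{V_{n}}(X,\mathcal{O})=0$ for every $n$, and passing to the limit gives $H^{1}_{c}(X,\mathcal{O})=0$.

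The main obstacle I expect is the bookkeeping of connectedness. The Hartogs hypothesis only furnishes that $W\setminus K$ is connected for an arbitrary domain $W$, whereas the identity-theorem arguments want the entire complement $X\setminus K$ (or $X'\setminus K$) to be connected, so that the supported sections $H^{0}_{L\setminus K}(X\setminus K,\mathcal{O})$ genuinely vanish and the transition maps are injective. Making the reduction to a connected complement precise — through the component of $X\setminus K$ meeting $W\setminus K$ together with the connected exhaustion $\{V_{n}\}$ — and verifying that the auxiliary embedding $X'$ really does neutralise the possibly nonzero $H^{1}(X,\mathcal{O})$ in these transition maps, is the delicate part of the argument, and it is exactly here that the two standing hypotheses are consumed.
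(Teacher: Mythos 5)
Your proof is correct, and your backward implication (Hartogs $\Rightarrow H^{1}_{c}=0$) coincides with the paper's: take a representative in $H^{0}(X'\setminus V_{n},\mathcal{O})$ of a class in $H^{1}_{V_{n}}(X',\mathcal{O})$, extend its restriction across $V_{n}$ by the Hartogs hypothesis, glue over the cover $\{X,\,X'\setminus V_{n}\}$ of $X'$, conclude $H^{1}_{V_{n}}(X',\mathcal{O})=0\cong H^{1}_{V_{n}}(X,\mathcal{O})$, and pass to the limit. The forward implication, however, takes a genuinely different route. The paper first proves surjectivity of $H^{0}(X,\mathcal{O})\to H^{0}(X\setminus K,\mathcal{O})$ (Lemma \ref{Lemma1}) by pushing a class of $H^{1}_{K}(X,\mathcal{O})$ into some $H^{1}_{K'}(X,\mathcal{O})$ where it dies and matching the two extensions by uniqueness; to transfer the conclusion to an arbitrary domain $W$ it then invokes the embedding into $X'$, deduces $H^{1}_{K}(X',\mathcal{O})=0$ from $H^{1}(X',\mathcal{O})=0$, and carries this to $H^{1}_{K}(W,\mathcal{O})$ by excision. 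You instead note that the kernel of each transition map $H^{1}_{K}(X,\mathcal{O})\to H^{1}_{L}(X,\mathcal{O})$ is the image of $H^{0}_{L\setminus K}(X\setminus K,\mathcal{O})$, which vanishes by the identity theorem on the connected set $X\setminus K$; hence $H^{1}_{K}(X,\mathcal{O})$ injects into $H^{1}_{c}(X,\mathcal{O})=0$, and excision gives $H^{1}_{K}(W,\mathcal{O})=0$ directly. This is the same uniqueness argument repackaged through the exact sequence of the pair $K\subset L$, but it buys something real: hypothesis (1) is not consumed in this direction at all, only in the converse. Two small remarks. First, the "delicate bookkeeping" you anticipate largely evaporates: $X\setminus K$ is automatically connected whenever $X$, $W$ and $W\setminus K$ are (a separation of $X\setminus K$ would, since the connected set $W\setminus K$ lies in one piece, produce a separation of $X=W\cup(X\setminus K)$), which is exactly the one-line observation the paper makes; no passage to components or enlargement of $K$ is needed. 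Second, both your argument and the paper's tacitly use the identity theorem on connected open subsets of $X$ (for $H^{0}_{K}(W,\mathcal{O})=0$, for the vanishing of $H^{0}_{L\setminus K}(X\setminus K,\mathcal{O})$, and for injectivity of the restrictions), which is harmless in the normal setting to which the theorem is ultimately applied.
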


\begin{proof}
First, we assume $H^{1}_{c}(X,\mathcal{O})=0$. Let $W\subset X$ be a domain and $K\subset W$ be a compact set such that $W\setminus K$ is connected. Note that since $X, W, W\setminus K$ are connected sets, it follows that $X\setminus K$ is connected set.

We need the following Lemma. 
\begin{lemma}\label{Lemma1}
Let $K\subset X$ be a compact set such that $X\setminus K$ is connected. If $H^{1}_{c}(X,\mathcal{O})=0$, then the restriction homomorphism $H^{0}(X,\mathcal{O}) \to H^{0}(X\setminus K,\mathcal{O})$ is an isomorphism.
\end{lemma}

\begin{proof}

We consider the following exact sequence of cohomology groups \cite{BanStan}:

\begin{equation*}
	\xymatrix@C=0.5cm{
0 \ar[r] &  H^{0}_{K}(X,\mathcal{O}) \ar[r] &  H^{0}(X,\mathcal{O}) \ar[r]^{R_{K}} & H^{0}(X\setminus K,\mathcal{O}) \ar[r]^{F_{K}} & H^{1}_{K}(X,\mathcal{O}) \ar[r] & \cdots }
\end{equation*}

Obviously, $H^{0}_{K}(X,\mathcal{O})=0$. So, the restriction homomorphism $R_{K}$ is injective. 

Recall that if $S,T\subset X$ are compact sets and $S\subset T$, then we obtain the canonical homomorphism $\phi_{ST}\colon H^{1}_{S}(X,\mathcal{O})\to H^{1}_{T}(X,\mathcal{O})$. Moreover, we have the canonical isomorphism \cite{BanStan} $$\varinjlim\limits_{S} H^{1}_{S}(X,\mathcal{O})\cong H^{1}_{c}(X,\mathcal{O})$$ where the inductive limit is taken over all compact subsets $S$ of $X$ (or over a cofinal part of them).

Let $f\in H^{0}(X\setminus K,\mathcal{O})$ and we consider $ F_{K}(f)\in H^{1}_{K}(X,\mathcal{O})$. Since $H^{1}_{c}(X,\mathcal{O})=0$, it follows that there exists a compact set $K'\subset X$ such that $K\subset K'$ and $\phi_{KK'}(F_{K}(f))=0$.

Note that the following diagram is commutative.

\[ \begin{diagram} 
\node{0} \arrow{e,t}{} \node{H^{0}(X,\mathcal{O})} \arrow[2]{e,t}{R_{K}} \arrow{ese,r}{R_{K'}} 
\node[2]{H^{0}(X\setminus K,\mathcal{O})} \arrow[2]{e,t}{F_{K}} \arrow{s,r}{}
\node[2]{H^{1}_{K}(X,\mathcal{O})}  \arrow{s,r}{\phi_{KK'}} \\
\node[4]{H^{0}(X\setminus K',\mathcal{O})} \arrow[2]{e,t}{F_{K'}} 
\node[2]{H^{1}_{K'}(X,\mathcal{O})} 
\end{diagram}\]

It follows that $F_{K'}(f|_{X\setminus K'})=0$. Thus there exists $g\in H^{0}(X,\mathcal{O})$ such that $g|_{X\setminus K'}=f|_{X\setminus K'}$. Recall that $X\setminus K$ is a connected set. Using the uniqueness theorem we obtain $g|_{X\setminus K}=R_{K}(g)=f$.  The proof of the lemma is complete.
\end{proof}

Further on, we have the following commutative diagram for $K\subset W\subset X\subset X'$. 

\[
\begin{diagram} 
\node{0\longrightarrow H^{0}(X',\mathcal{O})} \arrow{e,t}{R_1} \arrow{s,r}{}
\node{H^{0}(X'\setminus K,\mathcal{O})} \arrow{e,t}{} \arrow{s,r}{}
\node{H^{1}_{K}(X',\mathcal{O})} \arrow{e,t}{} \arrow{s,r}{H_1} 
\node{H^{1}(X',\mathcal{O})}\arrow{s,r}{}
\\
\node{0\longrightarrow H^{0}(X,\mathcal{O})} \arrow{e,t}{R_{2}} \arrow{s,r}{}
\node{H^{0}(X\setminus K,\mathcal{O})} \arrow{e,t}{} \arrow{s,r}{}
\node{H^{1}_{K}(X,\mathcal{O})} \arrow{e,t}{} \arrow{s,r}{H_2} 
\node{H^{1}(X,\mathcal{O})}\arrow{s,r}{}
\\
\node{0\longrightarrow H^{0}(W,\mathcal{O})} \arrow{e,t}{R_3} 
\node{H^{0}(W\setminus K,\mathcal{O})} \arrow{e,t}{} 
\node{H^{1}_{K}(W,\mathcal{O})} \arrow{e,t}{}
\node{H^{1}(W,\mathcal{O})}
\end{diagram}
\]

Using Lemma \ref{Lemma1}, we obtain that the restriction homomorphism $R_2$ is an isomorphism. Since $X\cap (X'\setminus K)=X\setminus K$, it follows that $R_{1}$ is an isomorphism. 

Since $H^{1}(X',\mathcal{O})=0$, it follows that $H^{1}_{K}(X',\mathcal{O})=0$. Using the excision property \cite{BanStan} we obtain $H_{1}$ and $H_{2}$ are canonical isomorphisms. We obtain that $H^{1}_{K}(W,\mathcal{O})=0$ and the restriction homomorphism $R_{3}$ is an isomorphsim.

Now, we assume $X$ admits the Hartogs phenomenon. In particular, the restricition homomorphism $H^{0}(X,\mathcal{O})\to H^{0}(X\setminus V_{n},\mathcal{O})$ is an isomorphism, where $\{V_{n}\}$ is the compact exhaustion such that $X\setminus V_n$ is a connected set for any $n$. 

We have the following commutative diagramm for $V_{n}\subset X\subset X'$. 

\[
\begin{diagram} 
\node{0\longrightarrow H^{0}(X',\mathcal{O})} \arrow{e,t}{R_1} \arrow{s,r}{}
\node{H^{0}(X'\setminus V_{n},\mathcal{O})} \arrow{e,t}{} \arrow{s,r}{}
\node{H^{1}_{V_{n}}(X',\mathcal{O})} \arrow{e,t}{} \arrow{s,r}{H_1} 
\node{H^{1}(X',\mathcal{O})}\arrow{s,r}{}
\\
\node{0\longrightarrow H^{0}(X,\mathcal{O})} \arrow{e,t}{R_{2}} 
\node{H^{0}(X\setminus V_{n},\mathcal{O})} \arrow{e,t}{} 
\node{H^{1}_{V_{n}}(X,\mathcal{O})} \arrow{e,t}{} 
\node{H^{1}(X,\mathcal{O})}
\end{diagram}
\]

Since $R_{2}$ is an isomorphism so is $R_1$ too. Since $H^{1}(X',\mathcal{O})=0$, it follows that $H^{1}_{V_{n}}(X,\mathcal{O})\cong H^{1}_{V_{n}}(X',\mathcal{O})=0$. By assumption, $\{V_{n}\}$ is a compact exhaustion of $X$. Thus we obtain $H^{1}_{c}(X,\mathcal{O})\cong \varinjlim\limits_{n} H^{1}_{V_n}(X,\mathcal{O})=0$.  The proof of the theorem is complete.
\end{proof}

Let $X$ be a normal (1,0)-compactifiable complex analytic variety, $X'$ be a corresponding compactification of $X$ and let $Z:=X'\setminus X$. Thus we have the following corollary.

\begin{corollary}\label{Cor1}
A normal (1,0)-compactifiable complex analytic variety $X$ admits the Hartogs phenomenon if and only if 
$$H^{0}(Z,i^{-1}\mathcal{O})=\CC$$ where $i\colon Z\hookrightarrow X'$ is the closed embedding.
\end{corollary}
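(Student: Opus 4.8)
The plan is to deduce Corollary~\ref{Cor1} from Theorem~\ref{th1}: first I would check that a normal $(1,0)$-compactifiable variety satisfies both hypotheses of that theorem, and then identify $H^{1}_{c}(X,\mathcal{O})$ with $H^{0}(Z,i^{-1}\mathcal{O})/\CC$ via the long exact sequence attached to the extension by zero of $\mathcal{O}$ across $Z$. Hypothesis~(1) is immediate: by Definition~\ref{maindef} the compactification $X'$ is a complex analytic variety with $H^{1}(X',\mathcal{O})=0$ (since $\sigma=0$), and $X\hookrightarrow X'$ is an open embedding because $Z=X'\setminus X$ is a closed proper analytic subset.

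For hypothesis~(2) I would build the compact exhaustion from the side of $Z$. Since $Z$ is a closed analytic subset of the compact variety $X'$, it is compact, and by the condition $b=1$ it is connected. As $X'$ is locally connected, I can choose a decreasing fundamental system $\{U_n\}$ of connected open neighborhoods of $Z$ with $\bigcap_{n}U_n=Z$, and set $V_n:=X'\setminus U_n$. Each $V_n$ is compact, $V_n\subset X$, the $V_n$ increase, and $\{V_n\}$ exhausts $X$ because any compact $L\subset X$ is eventually disjoint from $Z$, hence contained in some $V_n$; moreover $X\setminus V_n=U_n\setminus Z$. The step I expect to be the main obstacle is showing that each $U_n\setminus Z$ is connected. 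Here normality is essential: $U_n$ is a connected open subset of the normal variety $X'$, hence itself normal, and $Z$ is a nowhere-dense analytic subset of $U_n$, so by the standard fact that the complement of a proper analytic subset in a connected normal complex space is connected, $U_n\setminus Z=X\cap U_n$ is connected. This is exactly where both defining features of the $(1,0)$ case are used — the connectedness of $Z$ to obtain connected $U_n$, and the normality of $X'$ to puncture without disconnecting. Thus hypothesis~(2) holds and Theorem~\ref{th1} applies, so $X$ admits the Hartogs phenomenon if and only if $H^{1}_{c}(X,\mathcal{O})=0$.

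It then remains to compute $H^{1}_{c}(X,\mathcal{O})$. Writing $j\colon X\hookrightarrow X'$ for the open embedding, I would use the short exact sequence of sheaves on $X'$
\[
0\to j_{!}(\mathcal{O}|_{X})\to \mathcal{O}\to i_{*}(i^{-1}\mathcal{O})\to 0 .
\]
Since $X'$ is compact one has $H^{k}(X',j_{!}(\mathcal{O}|_{X}))\cong H^{k}_{c}(X,\mathcal{O})$, and since $i$ is a closed embedding $H^{k}(X',i_{*}(i^{-1}\mathcal{O}))\cong H^{k}(Z,i^{-1}\mathcal{O})$; both identifications come from the compact-support machinery of \cite{BanStan}. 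The associated long exact sequence then reads
\[
0\to H^{0}_{c}(X,\mathcal{O})\to H^{0}(X',\mathcal{O})\to H^{0}(Z,i^{-1}\mathcal{O})\to H^{1}_{c}(X,\mathcal{O})\to H^{1}(X',\mathcal{O})\to\cdots .
\]
Now $H^{0}_{c}(X,\mathcal{O})=0$, because a holomorphic function with compact support on the connected noncompact variety $X$ vanishes identically; $H^{0}(X',\mathcal{O})=\CC$, since $X'$ is a compact connected variety; and $H^{1}(X',\mathcal{O})=0$ by assumption. Hence the sequence collapses to $0\to\CC\to H^{0}(Z,i^{-1}\mathcal{O})\to H^{1}_{c}(X,\mathcal{O})\to 0$, yielding the isomorphism $H^{1}_{c}(X,\mathcal{O})\cong H^{0}(Z,i^{-1}\mathcal{O})/\CC$ announced in the introduction. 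Therefore $H^{1}_{c}(X,\mathcal{O})=0$ if and only if $H^{0}(Z,i^{-1}\mathcal{O})=\CC$, which combined with Theorem~\ref{th1} completes the proof.
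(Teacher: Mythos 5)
Your argument is correct and follows essentially the same route as the paper: the long exact sequence of the pair $(X',Z)$ (the paper cites it from Bredon rather than writing out $0\to j_{!}(\mathcal{O}|_{X})\to\mathcal{O}\to i_{*}i^{-1}\mathcal{O}\to 0$ explicitly) gives $H^{1}_{c}(X,\mathcal{O})\cong H^{0}(Z,i^{-1}\mathcal{O})/\CC$, and the compact exhaustion is built exactly as you do, by taking complements of shrinking connected neighborhoods of $Z$ and invoking normality of $X'$ together with the connectedness criterion for complements of thin analytic sets to see that $X\setminus V_{n}=U_{n}\setminus Z$ is connected. The only cosmetic difference is that the paper produces the shrinking neighborhoods via a metric on $X'$ rather than via local connectedness.
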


\begin{proof}
First, we have the following exact sequence \cite[Chapter II, Section 10.3]{Bredon}. 

\begin{multline*}
\xymatrix@C=0.5cm{
  0 \ar[r] & H^{0}_{c}(X,\mathcal{O}) \ar[rr] && H^{0}(X',\mathcal{O}) \ar[rr] && H^{0}(Z,i^{-1}\mathcal{O}) \ar[r] &\\ \ar[r] & H^{1}_{c}(X,\mathcal{O}) \ar[rr] && H^{1}(X',\mathcal{O}) \ar[r]  & \cdots }
\end{multline*}

Since $H^{0}_{c}(X,\mathcal{O})=0$, $H^{0}(X',\mathcal{O})=\CC$ and $H^{1}(X',\mathcal{O})=0$ it follows that $$H^{1}_{c}(X,\mathcal{O})\cong H^{0}(Z,i^{-1}\mathcal{O})/\CC.$$ 

Let $U\subset X'$ be a connected open neighbourhood of $Z$. Since $U$ is also a normal space and $Z$ is a thin set in $U$, by the criterion of connectedness \cite[Statement 13.8]{Grauert} we obtain that $U\setminus Z$ is a connected set. Thus the set $V=X'\setminus U$ is a compact subset in $X$, and $X\setminus V$ is connected.

A sequence of nested connected neighbourhoods $\{U_{n}\}_{n=1}^{\infty}$ of $Z$ with properties $\overline{U_{n+1}}\subset U_{n}$ and $\bigcap\limits_{n}U_{n}=Z$ induces a sequence of compact sets  $\{V_{n}\}_{n=1}^{\infty}$ in $X$ giving an exhaustion of $X$ such that $X\setminus V_{n}$ is connected.

Existence of such a sequence $\{U_{n}\}_{n=1}^{\infty}$ follows from metrizability of $X'$. Indeed, let $\rho$ be a metric compatible with the topology of $X'$. We define open sets as $$U_{n}:=\{x\in X'\mid \inf\limits_{z\in X'\setminus X}\rho(z,x)<\frac{1}{n}\}.$$

The sequence $\{U_{n}\}_{n=1}^{\infty}$ satisfies the properties $\overline{U_{n+1}}\subset U_{n}$ and $\bigcap\limits_{n}U_{n}=Z$. The proof of the corollary is complete.
\end{proof}

For example, let $X$ be a normal noncompact algebraic $G$-variety and $G$ be a complex linear algebraic group acting algebraically on $X$. By Sumihiro's results \cite[Theorem 3]{Sumi} there exists a $G$-equivariant compactification $X'$ of $X$(i.e. there exists a normal compact algebraic variety $X'$ on which an algebraic action of $G$ is given such that $X$ is embedded in $X'$ as an open subset and the algebraic action of $G$ on $X'$ is an extension of the given algebraic action of $G$ on $X$).

Additionally we assume that X is (b, 0)-compactifiable with $b \geq 1$. If $b>1$, then the analytic set $Z=X'\setminus X$ is not connected. But there exists a $G$-invariant Zariski open subvariety $X''\subset X'$ such that $X\subset X''$ and $X'\setminus X''$ is connected (i.e. $X''$ is (1,0)-compactifiable). In this case, if $X''$ admits the Hartogs phenomenon, then so does $X$. 

\section{Almost homogeneous $G$-varieties}
\label{s3}

Let $X$ be a complex analytic variety (i.e. reduced, irreducible complex analytic space) and $G$ be a connected complex Lie group acting holomorphically on $X$. In this situation $X$ is called a complex analytic \textit{$G$-variety}.

\begin{definition}
A complex analytic $G$-variety $X$ is called almost homogeneous if $X$ has an open $G$-orbit $\Omega$.
\end{definition}

Note that an open $G$-orbit $\Omega$ in an almost homogeneous complex analytic $G$-variety $X$ is unique and connected and $E:=X\setminus \Omega$ is a proper analytic subset \cite[Section 1.7, Proposition 4]{Akh}. 

The main object of this paper are normal (1,0)-compactifiable almost homogeneous complex algebraic $G$-varieties (see Definition \ref{maindef} for (1,0)-compactifiable varieties), where $G$ is a reductive complex Lie group. 

Recall that a connected complex Lie group $G$ is called reductive if $G$ has a compact real form $K$ (i.e. $K$ is a compact real Lie subgroup of $G$ and $Lie (K)\otimes\CC=Lie(G)$). 

This definition is closely related to the corresponding definition for connected linear algebraic groups. We recall that a connected linear algebraic group is called reductive if it is unipotent radical is trivial. A connected reductive linear algebraic group over $\CC$ is reductive in the sense of the first definition. Conversely, each connected reductive complex Lie group $G$ has a unique structure of a reductive linear algebraic group. Thus one can consider the subalgebra of regular functions $\CC[G]$ in the algebra of holomorphic functions $H^{0}(G,\mathcal{O})$.

Using Corollary \ref{Cor1}, we see that we have to study the space $H^{0}(Z,i^{-1}\mathcal{O})$. In the case where $G$ is a complex reductive Lie group we use the Harish-Chandra theorem (see section \ref{s4}). 

We conclude this section by proving the existence of a neighbourhood system of $Z:=X'\setminus X$ with a good property. 

Recall that $H^{0}(Z,i^{-1}\mathcal{O})=\varinjlim\limits_{U\supset Z}H^{0}(U,\mathcal{O})$, but the inductive limit may be taken over a cofinal system of neighbourhoods of the set $Z$.

Now let $X$ be an arbitrary complex analytic variety, $K$ be a compact Lie group acting holomorphically on $X$ and $Z$ be a connected compact $K$-invariant analytic set in $X$. 

\begin{proposition}\label{prop2}
There exists a cofinal system of neighbourhoods $\{U_{n}\}$ of $Z$ such that each $U_{n}$ is a $K$-invariant open neighbourhood.
\end{proposition}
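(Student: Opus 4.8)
The plan is to average an arbitrary cofinal system of neighbourhoods over the compact group $K$ to produce a $K$-invariant one. First I would fix a metric $\rho$ on $X$ compatible with its topology (possible since $X$, being a complex analytic space, is metrizable) and replace it by a $K$-invariant metric $\tilde\rho(x,y):=\int_{K}\rho(k.x,k.y)\,dk$, where $dk$ is the normalized Haar measure on $K$. Since $K$ acts by homeomorphisms and $Z$ is $K$-invariant, the averaged metric $\tilde\rho$ is still compatible with the topology (it is bounded above and below by positive multiples of $\rho$ on compacta, using compactness of $K$ and joint continuity of the action) and it is $K$-invariant: $\tilde\rho(g.x,g.y)=\tilde\rho(x,y)$ for all $g\in K$.

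Next I would define, exactly as in the proof of Corollary \ref{Cor1}, the open sets
$$U_{n}:=\Big\{x\in X\mid \inf_{z\in Z}\tilde\rho(z,x)<\tfrac{1}{n}\Big\}.$$
Because $\tilde\rho$ is $K$-invariant and $Z$ is $K$-stable, the distance function $x\mapsto \inf_{z\in Z}\tilde\rho(z,x)$ is $K$-invariant, and hence each $U_{n}$ is a $K$-invariant open neighbourhood of $Z$. The same argument as in Corollary \ref{Cor1} shows that $\{U_{n}\}$ is a cofinal (in fact nested, with $\bigcap_{n}U_{n}=Z$) system of neighbourhoods of $Z$, so it is cofinal among all neighbourhoods of $Z$.

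The main technical point to verify carefully is that the averaged metric $\tilde\rho$ remains a genuine metric inducing the original topology. Symmetry and the triangle inequality pass through the integral without trouble, and $\tilde\rho(x,y)=0$ forces $\rho(k.x,k.y)=0$ for almost every $k$, hence $x=y$ by continuity; so $\tilde\rho$ is a metric. For topological compatibility one uses that $K$ is compact and the action map $K\times X\to X$ is continuous, so on any compact neighbourhood the two metrics are uniformly comparable, giving that $\tilde\rho$ and $\rho$ define the same topology. I expect this comparability step to be the only place requiring genuine care; everything else is a direct transcription of the construction already used in the proof of Corollary \ref{Cor1}, now carried out with a $K$-invariant metric in place of an arbitrary one.
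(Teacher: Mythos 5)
Your construction is the same as the paper's: average the metric over the Haar measure of $K$ and take sublevel sets of the resulting distance to $Z$. The one place where your argument genuinely differs is also the place where it breaks down, and it is exactly the step you flag as the crux. You justify compatibility of $\tilde\rho$ with the original topology by claiming that $\tilde\rho$ is ``bounded above and below by positive multiples of $\rho$ on compacta, using compactness of $K$ and joint continuity of the action.'' That bi-Lipschitz comparability is not a consequence of continuity: the maps $x\mapsto k.x$ are only homeomorphisms, and a homeomorphism need not be Lipschitz with respect to an arbitrary compatible metric $\rho$, even on a compact set. (Already for a single continuous involution $\sigma$ of a compact interval with unbounded difference quotients, $\tfrac12\bigl(|x-y|+|\sigma(x)-\sigma(y)|\bigr)$ fails to satisfy $\tilde\rho\leq C\rho$.) So the inequality you lean on is unavailable. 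The weaker statement you actually need --- that $\tilde\rho$ induces the original topology, at least near $Z$ --- is true, but it requires a different argument (uniform continuity of $(k,y)\mapsto\rho(k.x,k.y)$ on $K\times\overline{U}$ for a relatively compact $U$, plus local compactness of $X$ for the reverse direction), not a two-sided Lipschitz bound.

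The paper sidesteps the comparison of topologies entirely, and you could too: fix a relatively compact neighbourhood $U$ of $Z$ and set $U_{n}:=\{x\in U\mid \operatorname{dist}_{\rho_K}(x,Z)<\tfrac1n\}$. Then (i) $\rho_K$ is continuous on $U\times U$ for the original topology because the integrand is continuous on the compact set $K\times\overline{U}\times\overline{U}$, which makes each $U_n$ open in the original topology; and (ii) cofinality follows because, for any neighbourhood $V$ of $Z$, the continuous positive function $\rho_K$ attains a positive minimum on the compact set $Z\times(\overline{U}\setminus V)$, so $U_n\subset V$ for large $n$. Neither step requires knowing that $\rho_K$ and $\rho$ generate the same topology on $X$. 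I would either repair your compatibility claim along the lines just indicated or, more simply, localize to a relatively compact $U$ as above; as written, the justification of your ``main technical point'' does not go through.
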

\begin{proof}
Let us note that $X$ is metrizable. Let $\rho$ be a metric compatible with the topology of $X$. Let $\mu$ be a Haar measure on the compact Lie group $K$. Define $$\rho_{K}(x,y):=\int\limits_{K} \rho(k.x,k.y)d\mu(k)$$
Clearly, $\rho_K$ is a metric. Since $\mu$ is left-right-invariant, it follows that $$\rho_{K}(s.x,s.y)=\int\limits_{K} \rho((ks).x,(ks).y)d\mu(k)=\int\limits_{K} \rho(k.x,k.y)d\mu(k)=\rho_{K}(x,y).$$

Let $U$ be a relatively compact neighbourhood of $Z$. Since $\rho(k.x,k.y)$ is continuous on the compact set $K\times\overline{U}\times\overline{U}$, it follows that  $\rho_{K}(x,y)$ is continuous on $U\times U$ with respect to the original topology on $X$.

Note that the distance function $dist (x, Z):=\inf\limits_{z\in Z}\rho_{K}(x,z)$ is a continuous function on $U$. 

Let $U_{n}:=\{x\in U\mid dist (x, Z)<\frac{1}{n}\}$. It is an open neighbourhood of $Z$ with respect to the original topology. Note that for all $x\in U_{n}$ and for all $k\in K$ we have $$\inf\limits_{z\in Z}\rho_{K}(k.x,z)=\inf\limits_{z\in Z}\rho_{K}(x,k^{-1}.z)=\inf\limits_{z\in Z}\rho_{K}(x,z)<\frac{1}{n}.$$ 

It follows that $U_n$ is a $K$-invariant neighbourhood of $Z$. Now, let $V$ be an arbitrary neighbourhood of $Z$. Since there exists $n$ such that $\frac{1}{n}<\inf\limits_{z\in Z,x\in X\setminus V}\rho_{K}(x,z)$, it follows that $U_{n}\subset V$. This concludes the proof of Proposition \ref{prop2}. 
\end{proof}

Therefore $H^{0}(Z,i^{-1}\mathcal{O})=\mathop{\underrightarrow{\lim}}_{U_n}H^{0}(U_n,\mathcal{O})$, where the inductive limit is taken over the neighbourhood system $\{U_n\}$ of $Z$ where $U_{n}$ are $K$-invariant.

\section{The Harish-Chandra theorem and its applications}
\label{s4}
First we recall the necessary elements of the representation theory of the compact Lie groups (see \cite{Akh, Sepanski}).

Let $F$ be a Frechet space over $\CC$, let $K$ be a compact Lie group and $$\rho\colon K\to Aut(F)$$ be a continuous representation (i.e. $\rho$ is a homomorphism of $K$ into the group of invertible continuous linear operators on $F$ such that $K\to F, k\mapsto \rho(k)f$ is continuous for every vector $f\in F$). 

We define the following $K$-invariant subspaces in $F$. 
\begin{definition}\noindent
\begin{itemize}
\item Define the subspace of all $K$-finite vectors as $$F^{0}:=\{f\in F\mid \dim span\langle\rho(k)f\mid k\in K\rangle<\infty\}$$
\item Define the subspace of all $K$-differentiable vectors as $$F^{\infty}:=\{f\in F\mid K\to F, k\mapsto \rho(k)f \text{ is differentiable}\}$$
\end{itemize}
\end{definition}

Let $\widehat{K}$ be the set of all equivalence classes of finite-dimensional irreducible linear representations of $K$.
For each $\delta\in \widehat{K}$ there is a continuous linear operator $$E_{\delta}\colon F\to F, E_{\delta}(f):=\int\limits_{K}\overline{\chi_{\delta}(k)}\rho(k)fd\mu(k)$$ where $\chi_{\delta}$ is the character of a representation in $\delta$. 

We have the following properties of the operators $E_{\delta}$ \cite[Section 5.1, Theorem 3]{Akh}:
\begin{enumerate}
\item Each $E_{\delta}$ is a continuous projection;
\item $E_{\delta}E_{\delta'}=0$ if $\delta\neq\delta'$;
\item $\rho(k)E_{\delta}=E_{\delta}\rho(k)$ for all $k\in K$
\end{enumerate}

Let $F_{\delta}:=E_{\delta}(F)$. Note that $F_{\delta}$ is a closed $K$-invariant subspace of $F$ for each $\delta\in\widehat{K}$ and $F^{0}=\bigoplus\limits_{\delta\in\widehat{K}}F_{\delta}$. Moreover $F_{\delta}$ is the isotypic component of $F$ of type $\delta$ (i.e. $F_{\delta}$ consists of those vectors in $F$, whose $K$-orbit is contained in a finite-dimensional $K$-submodule isomorphic to $mV_{\delta}$) \cite[Section 5.1, Theorem 4]{Akh}.

We have the series representation for any differentiable vector; this series converges absolutly with respect to any continuous seminorm on $F$. It is the so-called Harish-Chandra theorem \cite[Section 5.1, Theorem 5]{Akh}. 

\begin{theorem}\label{HarCh}
For any vector $f\in F^{\infty}$ $$f=\sum\limits_{\delta\in\widehat{K}} E_{\delta}f$$ where the convergence is absolute with respect to any continuous seminorm on $F$.
\end{theorem}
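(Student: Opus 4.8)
The plan is to deduce the absolute convergence from a quantitative decay estimate for the isotypic pieces $E_\delta f$, manufacturing arbitrarily fast decay in $\delta$ by means of the Casimir element of $K$, and then to identify the sum with $f$ via the completeness of the isotypic decomposition (the Peter--Weyl theorem). Throughout I would use the properties already recorded: each $E_\delta$ is a continuous projection, $E_\delta E_{\delta'}=0$ for $\delta\neq\delta'$, $\rho(k)E_\delta=E_\delta\rho(k)$, and $F^{0}=\bigoplus_\delta F_\delta$ with $F_\delta$ the $\delta$-isotypic (hence $K$-finite, hence differentiable) component.

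First I would introduce a Laplacian on $K$. Fix an $\mathrm{Ad}(K)$-invariant inner product on $\mathfrak{k}:=\mathrm{Lie}(K)$ with orthonormal basis $(X_i)$, and set $\Delta:=1-\sum_i X_i^{2}\in U(\mathfrak{k})$. Since $f\in F^{\infty}$ is smooth, the infinitesimal action extends to $U(\mathfrak{k})$ and $\rho(\Delta)^m$ maps $F^{\infty}$ into itself for every $m$. Because the inner product is $\mathrm{Ad}(K)$-invariant, $\sum_i X_i^{2}$ is central in $U(\mathfrak{k})$, so $\rho(\Delta)$ commutes with every $\rho(k)$, hence with every $E_\delta$; consequently it preserves each $F_\delta$ and acts there as multiplication by a scalar $c_\delta=1+\kappa_\delta\geq 1$, where $\kappa_\delta\geq 0$ is the Casimir eigenvalue of $\delta$. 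The key identity is then
$$E_\delta f = c_\delta^{-m}\,E_\delta\big(\rho(\Delta)^m f\big),\qquad m\in\mathbb{Z}_{\geq 0},$$
obtained by commuting $\rho(\Delta)^m$ past $E_\delta$ and using that it acts by $c_\delta^{m}$ on $F_\delta\ni E_\delta f$.

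Next I would bound the projections uniformly. For any continuous seminorm $p$ on $F$ and any $g\in F$, the orbit map $k\mapsto\rho(k)g$ is continuous, so $M_p(g):=\sup_{k\in K}p(\rho(k)g)<\infty$ by compactness of $K$; together with $\int_K|\chi_\delta|\,d\mu\leq(\int_K|\chi_\delta|^{2}\,d\mu)^{1/2}=1$ (normalized Haar measure and Schur orthogonality) this gives
$$p(E_\delta g)\leq\int_K|\chi_\delta(k)|\,p(\rho(k)g)\,d\mu(k)\leq M_p(g).$$
Applying this to $g=\rho(\Delta)^m f$ and combining with the key identity yields $p(E_\delta f)\leq c_\delta^{-m}M_p(\rho(\Delta)^m f)$. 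The remaining analytic input is summability of $\sum_\delta c_\delta^{-m}$: the classes $\delta\in\widehat{K}$ are indexed by dominant weights in a lattice cone of rank $r=\mathrm{rank}(K)$, along which $\kappa_\delta$ grows quadratically in the weight, while the number of weights of bounded norm grows only polynomially, so $\sum_\delta c_\delta^{-m}<\infty$ once $m$ is large enough (roughly $m>r/2$). For such $m$ we get $\sum_\delta p(E_\delta f)\leq M_p(\rho(\Delta)^m f)\sum_\delta c_\delta^{-m}<\infty$ for every continuous seminorm $p$. As $F$ is Fréchet, its topology is given by countably many seminorms and it is complete, so absolute convergence with respect to all of them forces $\sum_\delta E_\delta f$ to converge unconditionally to some $g\in F$, which is exactly the asserted absolute convergence.

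Finally I would identify $g$ with $f$. Applying the continuous projection $E_{\delta_0}$ to $g=\sum_\delta E_\delta f$ and using $E_{\delta_0}E_\delta=0$ for $\delta\neq\delta_0$ gives $E_{\delta_0}(g-f)=0$ for every $\delta_0$, so $g-f$ is annihilated by all isotypic projections. That such a vector must vanish is precisely the completeness of the Peter--Weyl decomposition: testing against an arbitrary continuous linear functional $\phi$ on $F$ reduces it to the classical fact that a smooth function on $K$ all of whose isotypic components vanish is identically zero, and then $f=g$ follows because the continuous dual separates points of the locally convex Hausdorff space $F$. I expect the main obstacle to be the convergence step --- specifically, marrying the Casimir decay estimate to the polynomial counting of $\widehat{K}$ to obtain summability --- while the identification of the limit rests conceptually on Peter--Weyl completeness, which I would cite.
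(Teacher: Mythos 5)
Your argument is correct and follows the standard route: the paper does not prove this statement at all but quotes it from Akhiezer \cite[Section 5.1, Theorem 5]{Akh}, and the proof given there is essentially the one you sketch (Casimir decay on isotypic components, polynomial growth of $\widehat{K}$, completeness via Peter--Weyl). The only caveat is notational: for $E_{\delta}$ to be the isotypic projection the integrand should carry the factor $\dim V_{\delta}$, which turns your uniform bound $p(E_{\delta}g)\leq M_{p}(g)$ into $p(E_{\delta}g)\leq (\dim V_{\delta})\,M_{p}(g)$; since $\dim V_{\delta}$ grows only polynomially in the highest weight, this does not affect the summability step.
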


Now let $X$ be a complex analytic $K$-variety and $K$ be a connected compact Lie group acts holomorphically on $X$. Then $K$ has a linear representation on the Frechet space $F=H^{0}(X,\mathcal{O})$. In this case, we have $F=F^{\infty}$ \cite[Section 5.2, Proposition]{Akh}. 

Let $G$ be a connected reductive Lie group with a real compact form $K$. Let $\Omega$ be a complex algebraic homogeneous $G$-variety and $\CC[\Omega]$ be the algebra of regular function on $\Omega$. Let $W\subset \Omega$ be a $K$-invariant domain.

\begin{proposition}\label{HarCh2}
For every $f\in H^{0}(W,\mathcal{O})$  we have $$f=\sum\limits_{\delta\in\widehat{K}}E_{\delta}f$$ where the convergence is absolute with respect to any continuous seminorm on $H^{0}(W,\mathcal{O})$ and moreover $E_{\delta}f\in \CC[\Omega]$.
\end{proposition}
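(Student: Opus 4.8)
The plan is to combine the Harish-Chandra decomposition from Theorem~\ref{HarCh} with the algebraicity of $K$-finite vectors on the homogeneous variety $\Omega$. First I would verify the hypotheses of Theorem~\ref{HarCh} for the Frechet space $F=H^{0}(W,\mathcal{O})$: since $K$ is a connected compact Lie group acting holomorphically on the $K$-invariant domain $W\subset\Omega$, the cited result (Section~5.2, Proposition of \cite{Akh}) gives $F=F^{\infty}$, so every $f\in H^{0}(W,\mathcal{O})$ is a differentiable vector. Applying Theorem~\ref{HarCh} then immediately yields the absolutely convergent expansion $f=\sum_{\delta\in\widehat{K}}E_{\delta}f$ with respect to any continuous seminorm on $H^{0}(W,\mathcal{O})$. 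This disposes of the convergence half of the statement with essentially no new work.

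The substantive claim is the second assertion, namely $E_{\delta}f\in\CC[\Omega]$. The strategy here is to show that each isotypic component $F_{\delta}=E_{\delta}(F)$ consists of \emph{restrictions to $W$ of regular functions on $\Omega$}. The key point is that $E_{\delta}f$ is a $K$-finite vector: by the properties of the projections $E_{\delta}$, the $K$-orbit of $E_{\delta}f$ spans a finite-dimensional $K$-submodule of $H^{0}(W,\mathcal{O})$. I would argue that a holomorphic function on $W$ whose $K$-translates span a finite-dimensional space must extend to a $K$-finite holomorphic function on all of $\Omega$, and that $K$-finite holomorphic functions on the affine homogeneous space $\Omega$ are automatically regular. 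The reductivity of $G$ and the fact that $\Omega$ is a complex algebraic homogeneous $G$-variety are essential: the algebra $\CC[\Omega]$ decomposes as a $G$-module (equivalently a $K$-module) into finite-dimensional isotypic pieces, and by the complexification principle ($K$-finiteness for the compact form $K$ upgrades to $G$-finiteness, hence algebraicity, because $Lie(K)\otimes\CC=Lie(G)$) these exhaust all $K$-finite holomorphic vectors.

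The main obstacle I anticipate is justifying the passage from a $K$-finite holomorphic function defined only on the $K$-invariant subdomain $W$ to a genuinely \emph{global regular} function on $\Omega$. The extension step requires showing that finite-dimensionality of the span of $K$-translates forces the local data on $W$ to be the restriction of an algebraic object; this is where one invokes that a finite-dimensional $K$-module of holomorphic functions, being a module over the compact form, complexifies to a finite-dimensional $G$-module, and a $G$-finite vector on the homogeneous space $\Omega=G/H$ is necessarily a matrix coefficient, hence regular. I would make this precise by identifying the finite-dimensional $K$-submodule $V$ containing the $K$-orbit of $E_{\delta}f$, complexifying the $K$-action on $V$ to a holomorphic $G$-action (using that $V$ is spanned by holomorphic functions and that $G$ is the complexification of $K$), and then observing that evaluation gives a $G$-equivariant realization of $V$ inside $\CC[\Omega]$.

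Once global algebraicity is established, the containment $E_{\delta}f\in\CC[\Omega]$ follows. I expect the convergence statement to be a direct citation and the algebraicity statement to carry the real content, with the complexification argument ($K$-finite $\Rightarrow$ $G$-finite $\Rightarrow$ regular on the affine homogeneous space $\Omega$) being the crux of the proof.
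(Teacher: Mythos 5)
Your strategy is sound, but you should know that the paper does not actually prove this proposition: its entire proof is the single citation of \cite[Section 5.3, Theorem 2]{Akh}, which is precisely the statement being made (on a $K$-invariant domain of an algebraic homogeneous space of a reductive group, every holomorphic function has an absolutely convergent Harish--Chandra expansion whose isotypic components are restrictions of regular functions). So your proposal amounts to reconstructing the proof of the cited theorem. The first half of your argument (verifying $F=F^{\infty}$ via \cite[Section 5.2]{Akh} and invoking Theorem \ref{HarCh}) matches the paper's setup exactly, and the second half --- $K$-finiteness of $E_{\delta}f$, complexification of the finite-dimensional $K$-module to a holomorphic (hence rational) $G$-module, and realization by matrix coefficients inside $\CC[\Omega]$ --- is the standard route underlying Akhiezer's theorem. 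What the paper's approach buys is brevity; what yours buys is an actual argument, provided one step is done carefully.

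That step is the descent to $\Omega=G/H$, which your sketch glosses with ``evaluation gives a $G$-equivariant realization of $V$ inside $\CC[\Omega]$.'' Evaluation at a base point $x_{0}\in W$ gives $F_{v}(g)=(g^{-1}.v)(x_{0})$, and rationality of the complexified representation puts $F_{v}\in\CC[G]$; but to land in $\CC[\Omega]=\CC[G]^{H}$ you need right invariance under the \emph{full} stabilizer $H$, and this cannot be deduced from invariance under $K\cap H$ by any density argument: in the paper's own examples $\Omega=G/U^{-}$, so $H$ is unipotent and $K\cap H=\{e\}$. Likewise, the identification of the abstract complexified action with the geometric one, $(g.v)(x)=v(g^{-1}x)$, holds a priori only on $K\times W$ and must be propagated by analytic continuation, using that $K$ is a maximal totally real submanifold of $G$ and hence a uniqueness set for functions holomorphic in the group variable, together with a connectedness argument for the sets $\{g\mid g^{-1}x\in W\}$. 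This is exactly the technical content of \cite[Section 5.3, Theorem 2]{Akh}; either carry it out in full or, as the paper does, cite it.
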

\begin{proof}
Using \cite[Section 5.3, Theorem 2]{Akh}, we conclude the proof of the Proposition \ref{HarCh2}. 
\end{proof}

In other words, the algebra $\CC[\Omega]$ is dense in $H^{0}(W,\mathcal{O})$.

Let $G$ and $K$ be as above, and let $X$ be a normal (1,0)-compactifiable almost homogeneous complex algebraic $G$-variety. Let $X'$ be a corresponding $G$-equi\-variant compactification of $X$ and $Z:=X'\setminus X$. Denote by $\mathcal{G}(X')$ the set of $G$-stable prime divisors on $X'$, let $$Y:=X'\setminus \bigcup\limits_{D\in\mathcal{G}(X'),D\subset X} D,$$

It is a Zariski open algebraic subvariety in $X'$ which is a normal almost homogeneous complex algebraic $G$-variety. Denote by $\CC[Y]$ the algebra of regular functions on $Y$. 

Note that $\{D\in\mathcal{G}(X'),D\subset X\}$ may be an empty set, in which case we obtain $Y=X'$ and $\CC[Y]=\CC$.

The vector space $H^{0}(Z,i^{-1}\mathcal{O})$ has a natural topological vector space structure (namely, it has the direct limit topology).

\begin{lemma}\label{Prop5}
The canonical homomorphism $$\CC[Y]\to H^{0}(Z,i^{-1}\mathcal{O})$$ is injective and with a dense range. 
\end{lemma}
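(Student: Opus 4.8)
The plan is to establish the two assertions of Lemma~\ref{Prop5} separately: injectivity of the canonical map $\CC[Y]\to H^{0}(Z,i^{-1}\mathcal{O})$ and density of its range. The canonical map is constructed as follows. Since $Y=X'\setminus\bigcup_{D\in\mathcal{G}(X'),D\subset X}D$ is a $G$-invariant Zariski open neighbourhood of $Z$ (note that every $G$-stable prime divisor of $X'$ contained in $X$ is removed, so $Z=X'\setminus X\subset Y$), every regular function on $Y$ restricts to a holomorphic function on any neighbourhood $U_n\subset Y$ of $Z$, hence defines an element of the direct limit $H^{0}(Z,i^{-1}\mathcal{O})=\varinjlim_{U_n}H^{0}(U_n,\mathcal{O})$.

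For injectivity, I would argue that if $f\in\CC[Y]$ maps to $0$ in the direct limit, then $f$ vanishes on some open neighbourhood of $Z$ inside $Y$. Because $Y$ is an irreducible variety (being a Zariski open subset of the irreducible variety $X'$) and $f$ is regular on $Y$, vanishing on a nonempty open subset forces $f\equiv 0$ on $Y$ by the identity principle for irreducible varieties. This step is routine.

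The substantive part is density. Here I would invoke the Harish-Chandra machinery developed in Section~\ref{s4}. Fix a maximal compact subgroup $K\subset G$. By Proposition~\ref{prop2} applied to the connected compact $K$-invariant analytic set $Z$, there is a cofinal system of $K$-invariant neighbourhoods $\{U_n\}$ of $Z$, so that $H^{0}(Z,i^{-1}\mathcal{O})=\varinjlim_{U_n}H^{0}(U_n,\mathcal{O})$ with each $U_n$ being $K$-invariant. An element of $H^{0}(Z,i^{-1}\mathcal{O})$ is then represented by some $f\in H^{0}(U_n,\mathcal{O})$ for a $K$-invariant neighbourhood $U_n$. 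Shrinking $U_n$ if necessary, I would arrange that $U_n\cap\Omega$ is a $K$-invariant domain in the open $G$-orbit $\Omega$; the key point is that $\Omega$ is dense in $Y$ (since $Y$ is almost homogeneous with the same open $G$-orbit $\Omega$), so that $U_n\cap\Omega$ is a nonempty $K$-invariant open subset of $\Omega$.

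Then I would apply Proposition~\ref{HarCh2} to the restriction $f|_{U_n\cap\Omega}\in H^{0}(U_n\cap\Omega,\mathcal{O})$: its Harish-Chandra expansion $f=\sum_{\delta\in\widehat{K}}E_\delta f$ converges absolutely with respect to every continuous seminorm on $H^{0}(U_n\cap\Omega,\mathcal{O})$, and crucially each $E_\delta f\in\CC[\Omega]$. The finite partial sums $\sum_{\delta\in F}E_\delta f$ (over finite subsets $F\subset\widehat{K}$) are regular functions on $\Omega$ that converge to $f$ in the Fr\'echet topology of $H^{0}(U_n\cap\Omega,\mathcal{O})$. The main obstacle — and the step requiring genuine care — is to upgrade these approximants from regular functions on $\Omega$ to regular functions on all of $Y$, i.e.\ to show $E_\delta f\in\CC[Y]$. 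This is where normality of $Y$ and the choice of $Y$ as the complement of precisely the $G$-stable divisors \emph{inside} $X$ enter decisively: a rational function on $Y$ that is regular on $\Omega$ can have poles only along $G$-stable prime divisors of $Y$, and $Y$ has been constructed to remove exactly those $G$-stable divisors lying in $X$, so the only $G$-stable divisors remaining in $Y$ meet $Z$; since $f$ is holomorphic on the neighbourhood $U_n\supset Z$, the expansion coefficients $E_\delta f$ cannot acquire poles there, whence $E_\delta f$ extends regularly across $Z$ and lies in $\CC[Y]$. Once this is secured, the partial sums lie in $\CC[Y]$ and converge to $f$ in $H^{0}(U_n,\mathcal{O})$, and since the direct limit topology on $H^{0}(Z,i^{-1}\mathcal{O})$ is the finest making each $H^{0}(U_n,\mathcal{O})\to H^{0}(Z,i^{-1}\mathcal{O})$ continuous, this convergence descends to convergence in the target. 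Thus $\CC[Y]$ is dense, completing the proof.
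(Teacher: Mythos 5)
Your proposal is correct and follows essentially the same route as the paper: $K$-invariant neighbourhoods from Proposition~\ref{prop2}, the Harish--Chandra expansion with $E_{\delta}f\in\CC[Y]$ because every $G$-stable prime divisor of $Y$ meets $U_{n}\supset Z$, and continuity of $H^{0}(U_{n},\mathcal{O})\to H^{0}(Z,i^{-1}\mathcal{O})$ to pass density to the direct limit. You are in fact slightly more explicit than the paper on injectivity and on why poles can only occur along $G$-stable divisors.
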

\begin{proof}
Using Proposition \ref{prop2}, we obtain that there exists a cofinal system of neighbourhoods $\{U_n\}$ of $Z$ such that each $U_{n}$ is $K$-invariant. 

Consider the Frechet space $H^{0}(U_{n},\mathcal{O})$. Using Theorem \ref{HarCh}, we obtain any $f\in H^{0}(U_{n},\mathcal{O})$ has a series representation $$f=\sum\limits_{\delta\in\widehat{K}} E_{\delta}f$$ with the uniform convergence on compact sets in $U_n$.
Using Proposition \ref{HarCh2}, we obtain $E_{\delta}f\in \CC[\Omega]$. 

Since $f\in H^{0}(U_{n},\mathcal{O})$ and $U_{n}$ intersects with each prime $G$-stable divisor on $Y$, it follows that the rational function $E_{\delta}(f)$ has no poles on each prime $G$-stable divisor on $Y$. Thus we obtain $E_{\delta}(f)\in \CC[Y]$. Moreover $\CC[Y]$ is the dense subspace in $H^{0}(U_{n},\mathcal{O})$.

Clearly, the canonical homomorphism $\CC[Y]\to H^{0}(Z,i^{-1}\mathcal{O})$ is injective. 

Now let $[f]\in H^{0}(Z,i^{-1}\mathcal{O})$. It is represented by $f$ in $H^{0}(U_{n},\mathcal{O})$ for some $n$. Since $\CC[Y]$ is the dense subspace in $H^{0}(U_{n},\mathcal{O})$, it follows that there exists $\{f_{n}\}_{n\in\mathbb{N}}\subset \CC[Y]$ such that $\lim\limits_{n\to\infty}f_{n}=f$ in $H^{0}(U_{n},\mathcal{O})$. Continuity of the canonical map $$H^{0}(U_{n},\mathcal{O})\to H^{0}(Z,i^{-1}\mathcal{O}), f\mapsto [f]$$ implies that $\lim\limits_{n\to\infty}[f_{n}]=[f]$. Thus the map $\CC[Y]\to H^{0}(Z,i^{-1}\mathcal{O})$ has a dense range.  The proof of the lemma is complete.
\end{proof}

Using Corollary \ref{Cor1} and Lemma \ref{Prop5}, we obtain the following proposition.

\begin{proposition}\label{Prop6}
Let $G$ be a connected complex reductive Lie group, and let $X$ be a normal (1,0)-compactifiable almost homogeneous algebraic $G$-variety. Then $X$ admits the Hartogs phenomenon if and only if $\CC[Y]=\CC$.
\end{proposition}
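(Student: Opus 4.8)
The plan is to combine the two tools already in hand: Corollary \ref{Cor1} reduces the Hartogs phenomenon to the condition $H^{0}(Z,i^{-1}\mathcal{O})=\CC$, and Lemma \ref{Prop5} tells us that $\CC[Y]$ sits inside $H^{0}(Z,i^{-1}\mathcal{O})$ as a dense subspace. So the whole argument is an elementary bridge between these two facts. First I would invoke Corollary \ref{Cor1}, whose hypotheses are met because $X$ is normal and (1,0)-compactifiable, to record that $X$ admits the Hartogs phenomenon \emph{if and only if} $H^{0}(Z,i^{-1}\mathcal{O})=\CC$, where $Z=X'\setminus X$ and $i\colon Z\hookrightarrow X'$ is the closed embedding.

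Next I would translate this equality into a statement about $\CC[Y]$. The direction $\CC[Y]=\CC \Rightarrow H^{0}(Z,i^{-1}\mathcal{O})=\CC$ is the content that uses density: by Lemma \ref{Prop5} the constants $\CC[Y]=\CC$ form a dense subspace of $H^{0}(Z,i^{-1}\mathcal{O})$, but $\CC$ is already a closed (indeed finite-dimensional, hence complete) subspace of the topological vector space $H^{0}(Z,i^{-1}\mathcal{O})$, so a dense subspace that is simultaneously closed must be the whole space; therefore $H^{0}(Z,i^{-1}\mathcal{O})=\CC$. Conversely, the inclusion $\CC[Y]\hookrightarrow H^{0}(Z,i^{-1}\mathcal{O})$ of Lemma \ref{Prop5} is injective, so if the target equals $\CC$ then $\CC[Y]$ injects into $\CC$; since $\CC[Y]$ always contains the constants, this forces $\CC[Y]=\CC$. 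Chaining these equivalences gives exactly: $X$ admits the Hartogs phenomenon $\iff \CC[Y]=\CC$.

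The step I expect to require the most care is the forward implication, specifically the assertion that a \emph{dense} subspace equal to $\CC$ forces the ambient space to be $\CC$. This is immediate once one notes that the one-dimensional subspace $\CC\subset H^{0}(Z,i^{-1}\mathcal{O})$ is closed in the direct-limit topology; I would justify this by observing that $H^{0}(Z,i^{-1}\mathcal{O})$ is Hausdorff (as a locally convex direct limit of the Fréchet spaces $H^{0}(U_n,\mathcal{O})$ along continuous maps), so every finite-dimensional subspace is closed, and a subset that is both dense and closed is everything. No genuinely new idea is needed beyond this topological remark; the substance of the argument has already been discharged in Lemma \ref{Prop5} via the Harish-Chandra expansion, and here one only harvests its consequence.
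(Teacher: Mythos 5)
Your proposal is correct and follows exactly the route the paper intends: the paper's ``proof'' of Proposition \ref{Prop6} is just the one-line remark that it follows from Corollary \ref{Cor1} and Lemma \ref{Prop5}, and you have filled in precisely the intended bridge (density of $\CC[Y]$ plus closedness of the finite-dimensional subspace $\CC$ in one direction, injectivity of $\CC[Y]\hookrightarrow H^{0}(Z,i^{-1}\mathcal{O})$ in the other). If you want to sidestep any worry about Hausdorffness of the direct-limit topology, note that Lemma \ref{Prop5} actually establishes density of $\CC[Y]$ in each Fr\'echet space $H^{0}(U_{n},\mathcal{O})$, so $\CC[Y]=\CC$ already forces $H^{0}(U_{n},\mathcal{O})=\CC$ for every $n$ and hence $H^{0}(Z,i^{-1}\mathcal{O})=\CC$.
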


\section{Weight criterion for the Hartogs phenomenon}
\label{s5}

Let $X, X', Y$ be as in Section \ref{s4}. We have to investigate $\CC[Y]$. Since $G$ acts algebraically on $Y$ it follows that $\CC[Y]$ is a representation of $G$. 

We recall some facts of the representation theory of reductive groups (see \cite{Brion, Humphreys}). 

Let $G$ be a connected reductive complex Lie group, $B\subset G$ be a Borel subgroup and $T\subset B$ be a maximal algebraic torus with the character lattice $\XX(T)$. For the Lie algebra $\mathfrak{g}=Lie(G)$ we have the root decomposition $$\mathfrak{g}=\mathfrak{t}\oplus\bigoplus\limits_{\alpha\in R}\mathfrak{g}_{\alpha}, $$ with respect to the adjoint reprepresentation, where $R\subset \mathcal{X}(T)$ is the root system of $G$. 

Let $R_{+}\subset R$ be the set of positive roots. This set is uniquely defined by the following condition $$Lie(B)=\mathfrak{t}\oplus\bigoplus\limits_{\alpha\in R_{+}}\mathfrak{g}_{\alpha}.$$ 

Denote by $S$ the set of simple roots and denote by $S^{\vee}$ the set of dual simple roots (it is a subset in the 1-parameter subgroup lattice $\XX^{*}(T)$). 

Denote by $C_{+}$ the dominant Weyl chamber. Recall that $$C_{+}:=\{t\in \XX(T)\otimes\mathbb{R}\mid \langle t,\alpha^{\vee}\rangle\geq 0, \forall \alpha\in S\}.$$ 

Define the set of dominant characters $\XX_{+}(T)$ as $\XX(T)\cap C_{+}$. 

We need the following facts. 
\begin{enumerate}
\item There is a bijective correspondence between the set $\XX_{+}(T)$ of dominant characters and the set $\widehat{G}$ of isomorphism classes of irreducible representations of $G$.
\item For any rational representation $V$ of the reductive group $G$ (i.e. a representation such that every $v\in V$ is contained in a finite-dimensional $G$-stable subspace on which $G$ acts algebraically) we have the canonical decomposition 
$$V\cong\bigoplus\limits_{\lambda\in\XX_{+}(T)}V_{\lambda}^{(B)}\otimes V(\lambda)$$ where $V_{\lambda}^{(B)}:=\{v\in V\mid b.v=\lambda(b)v\}$ is the set of $B$-eigenvectors of weight $\lambda$. 
\end{enumerate}

Since $\CC[Y]$ is a rational representation of $G$ \cite[Lemma 1.5]{Brion}, it follows that the canonical decomposition for $\CC[Y]$ has the form

$$\CC[Y]\cong\bigoplus\limits_{\lambda\in\XX_{+}(T)}\CC[Y]_{\lambda}^{(B)}\otimes V(\lambda).$$

Define the weight monoid of $Y$ as $\Lambda_{+}(Y):=\{\lambda\in \XX_{+}(T)\mid \CC[Y]_{\lambda}^{(B)}\neq 0\}$.

We obtain the following weight criterion for the Hartogs phenomenon.

\begin{theorem}
Let $G$ be a connected complex reductive Lie group, and let $X$ be a normal (1,0)-compactifiable almost homogeneous algebraic $G$-variety. Then $X$ admits the Hartogs phenomenon if and only if $\Lambda_{+}(Y)=0$ and $\CC[Y]^{B}=\CC$.
\end{theorem}

\section{Convex geometric criterion for the Hartogs phenomenon in spherical varieties}
\label{s6}

In this section we recall some facts from the theory of spherical varieties (see \cite{Gandini, Tim}). 

Let $G$ be a complex reductive Lie group, $B\subset G$ be a Borel subgroup, $T\subset B$ be a maximal algebraic torus.

\begin{definition}\label{defspher}
A normal almost homogeneous complex algebraic $G$-variety with the open orbit $\Omega$ is called spherical if $\Omega$ contains an open $B$-orbit $O$.
\end{definition}

Note that the open $B$-orbit $O$ is an affine variety \cite[Theorem 3.5]{Tim}.

Also, a normal algebraic $G$-variety $X$ is spherical if and only if any $B$-invariant rational function on $X$ is constant (i.e. $\mathbb{C}(X)^{B}=\mathbb{C}$) \cite[Theorem 2.8]{Gandini} and if and only if $X$ contains finitely many $B$-orbits \cite[Theorem 2.11]{Gandini}. 

With the open orbit $\Omega$ we associate the weight lattice 

\begin{equation*}
M:=\{\lambda\in \mathfrak{X}(T)\mid \mathbb{C}(\Omega)^{(B)}_{\lambda}\neq0\}.
\end{equation*}
and the dual weight lattice $N:=Hom(M,\mathbb{Z})$.

Also, let $M_{\mathbb{R}}:=M\otimes\mathbb{R}$ and $N_{\mathbb{R}}:=N\otimes\mathbb{R}$. Note that $M$ is a sublattice in the character lattice $\mathfrak{X}(T)$ of the torus $T$. Recall that $B=TU$ (here $U$ is the unipotent radical of $B$), and since $U$ has no nontrivial characters we obtain $\XX(B)=\XX(T)$. We define $$\mathbb{C}(\Omega)^{(B)}:=\{f\in \mathbb{C}(\Omega)\setminus\{0\}\mid \exists\lambda\in\XX(T): b.f=\lambda(b)f, \forall b\in B\}.$$ 

It follows that we have the isomorphism $$M\cong\mathbb{C}(\Omega)^{(B)}/\mathbb{C}^{*}$$ 
given by $\lambda\mapsto [f]$ where $[f]$ is the equivalence class of any $$f\in \mathbb{C}(\Omega)^{(B)}_{\lambda}=\{f\in \mathbb{C}(\Omega)\mid b.f=\lambda(b)f, \forall b\in B\}.$$

Let $Y$ be as in section \ref{s4}. Denote by $\mathcal{B}(Y)$ the set of all prime $B$-stable divisors on $Y$. Note that $\mathcal{B}(Y)$ is the union of the set of all prime $G$-stable divisors on $Y$ and the set of all prime $B$-stable but not $G$-stable divisors on $Y$. 

Each $B$-stable divisor $D\in \mathcal{B}(Y)$ defines the discrete valuation $$v_{D}\colon \mathbb{C}(\Omega)\setminus\{0\}\to \mathbb{Z}.$$ 

Recall that $v_{D}(f)$ is the order of zeros or poles of $f$ at $D$. Also, the valuation $v_{D}$ defines a point in the dual weight lattice $a_{D}\in N=Hom(M,\mathbb{Z})$ by setting $\langle a_{D},\lambda\rangle:=v_{D}(f)$ for $f\in \mathbb{C}(\Omega)^{(B)}_{\lambda}$.

Define $$L:=\{\lambda\in M_\mathbb{R}\mid \langle a_{D},\lambda\rangle \geq 0, \forall D\in\mathcal{B}(Y)\}$$ and we identify $L$ with its image with respect to the injective map $$\iota\colon M_{\mathbb{R}}\hookrightarrow \mathfrak{X}(T)\otimes\mathbb{R}.$$ 

We obtain the following description of the set of $B$-eigenvectors of $\CC[Y]$.  

\begin{lemma}
$\mathbb{C}[Y]^{(B)}_{\lambda}=\begin{cases}
\mathbb{C}[O]^{(B)}_{\lambda}\neq 0, & \text{if $\lambda\in L\cap M$;} \\
0, & \text{if $\lambda\not\in L\cap M$.}\end{cases}$

In particular, we obtain the description of the weight monoid of $Y$ $$\Lambda_{+}(Y)=L\cap M$$
\end{lemma}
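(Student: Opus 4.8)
The plan is to compute the $B$-semiinvariants of $\CC[Y]$ by relating regular functions on $Y$ to rational functions on the open orbit $\Omega$, controlled by the valuations $v_D$ attached to the $B$-stable divisors $D\in\mathcal{B}(Y)$. The starting point is that since $\Omega$ is spherical, the open $B$-orbit $O\subset\Omega$ is affine and $\CC(O)=\CC(\Omega)$, and for each weight $\lambda$ the space $\CC(\Omega)^{(B)}_{\lambda}$ is one-dimensional modulo scalars (this is exactly the isomorphism $M\cong\CC(\Omega)^{(B)}/\CC^{*}$ recorded before the statement). So fixing a representative $f_{\lambda}\in\CC(\Omega)^{(B)}_{\lambda}$ for each $\lambda\in M$, every $B$-eigenfunction of weight $\lambda$ on $Y$ is a scalar multiple of $f_\lambda$, and the only question is whether $f_\lambda$ is actually regular on all of $Y$.

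First I would establish the inclusion $\CC[Y]^{(B)}_{\lambda}\subset\CC(\Omega)^{(B)}_{\lambda}$: a $B$-semiinvariant regular function on $Y$ restricts to one on the dense $B$-orbit, and since $Y$ is a normal variety with function field $\CC(\Omega)$, regularity is detected divisorially. Concretely, $f_\lambda\in\CC[Y]$ if and only if $f_\lambda$ has no poles along any prime divisor of $Y$; because $f_\lambda$ is a $B$-eigenfunction, its divisor of poles is $B$-stable, hence supported on the $B$-stable prime divisors $\mathcal{B}(Y)$. Thus $f_\lambda$ is regular on $Y$ exactly when $v_D(f_\lambda)\geq 0$ for all $D\in\mathcal{B}(Y)$, which by the definition $\langle a_D,\lambda\rangle=v_D(f_\lambda)$ is precisely the condition $\lambda\in L$. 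Since we are considering lattice weights, this reads $\lambda\in L\cap M$. This gives both cases of the displayed formula: if $\lambda\in L\cap M$ then $f_\lambda\in\CC[Y]^{(B)}_\lambda$ so the space is nonzero, and if $\lambda\notin L\cap M$ then no nonzero multiple of $f_\lambda$ is regular, forcing $\CC[Y]^{(B)}_\lambda=0$.

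Next I would identify the nonzero space with $\CC[O]^{(B)}_\lambda$. Since $O$ is the open affine $B$-orbit, restriction gives a map $\CC[Y]^{(B)}_\lambda\to\CC[O]^{(B)}_\lambda$, which is injective because $O$ is dense. For surjectivity (equivalently, to see $\CC[O]^{(B)}_\lambda\neq 0$ forces the same one-dimensional space), I would use that $O$ being a single $B$-orbit forces $\CC[O]^{(B)}_\lambda$ to be at most one-dimensional and nonzero exactly when $\lambda$ lies in the weight monoid of $O$; combined with the fact that the valuative condition $\lambda\in L\cap M$ is the intrinsic characterization of weights extending regularly to $Y$, the two coincide. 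The statement $\Lambda_+(Y)=L\cap M$ then follows immediately by intersecting with the dominant chamber and reading off which $\lambda$ give $\CC[Y]^{(B)}_\lambda\neq 0$.

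The main obstacle I expect is the surjectivity/nonvanishing claim $\CC[O]^{(B)}_\lambda\neq 0$ for $\lambda\in L\cap M$, i.e. verifying that the divisorial condition really is attained and not merely necessary. This requires knowing that the valuations coming from $\mathcal{B}(Y)$ cut out exactly the regular semiinvariants, which rests on normality of $Y$ together with the structure theory of spherical embeddings — specifically that the prime $B$-stable divisors and $G$-invariant valuations jointly control extension of functions from $\Omega$ to $Y$. The subtlety is to confirm that imposing nonnegativity along the finitely many $D\in\mathcal{B}(Y)$ (rather than along all valuations of $\CC(\Omega)$) suffices, which is where the finiteness of $B$-orbits in a spherical variety and the affineness of $O$ do the essential work.
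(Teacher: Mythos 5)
Your argument is correct and follows essentially the same route as the paper: both reduce the question to the identity $\CC[Y]^{(B)}_{\lambda}=\{f\in \CC[O]^{(B)}_{\lambda}\mid v_{D}(f)\geq 0,\ \forall D\in\mathcal{B}(Y)\}$, justified by normality of $Y$ and the fact that the polar divisor of a $B$-eigenfunction is supported on $B$-stable prime divisors, and then observe that for $\lambda\in M$ the (one-dimensional) eigenspace $\CC(\Omega)^{(B)}_{\lambda}$ is automatically regular and nonvanishing on the open $B$-orbit $O$. The ``obstacle'' you flag at the end is not actually an issue: $\CC[O]^{(B)}_{\lambda}=\CC(\Omega)^{(B)}_{\lambda}\neq 0$ for $\lambda\in M$ is immediate from the definition of $M$, exactly as the paper uses it.
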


\begin{proof}
Note that $$\mathbb{C}[Y]^{(B)}_{\lambda}=\{f\in \mathbb{C}[O]^{(B)}_{\lambda}\mid \langle a_{D},\lambda\rangle=v_{D}(f)\geq 0, \forall D\in\mathcal{B}(Y)\}.$$ 

If $\lambda\in L$, then for any $f\in \mathbb{C}[O]^{(B)}_{\lambda}$ we obtain $\langle a_{D}, \lambda\rangle\geq 0$ for all $D\in \mathcal{B}(Y)$. It follows that $\mathbb{C}[Y]^{(B)}_{\lambda}=\mathbb{C}[O]^{(B)}_{\lambda}$. 

By definition of $M$, we have $\mathbb{C}(\Omega)^{(B)}_{\lambda}\neq 0$ for all $\lambda\in M$. Since $$\mathbb{C}[Y]^{(B)}_{\lambda}=\mathbb{C}(\Omega)^{(B)}_{\lambda}$$ for $\lambda\in L$, it follows that $\mathbb{C}[Y]^{(B)}_{\lambda}\neq 0$.

Now let $\lambda\not\in L$ and we assume there exits $f\in \mathbb{C}[Y]^{(B)}_{\lambda}$ such that $f\neq 0$. Then there is $D\in\mathcal{B}(Y)$ such that $v_{D}(f)<0$. This is a contradiction; the proof of the lemma is complete.
\end{proof}

We obtain the following corollary.

\begin{corollary}\label{cormain11}
Let $X$ be a (1,0)-compactifiable spherical $G$-variety. Then $X$ admits the Hartogs phenomenon if and only if $L=0$.
\end{corollary}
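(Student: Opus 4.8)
The plan is to combine the weight criterion (the theorem stated just before Section~\ref{s6}) with the Lemma just established, and then to reduce the vanishing of the weight monoid $\Lambda_{+}(Y)$ to the vanishing of the cone $L$ by a rationality argument.

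First I would observe that for a spherical variety the second condition $\CC[Y]^{B}=\CC$ in the weight criterion is automatic, so that only $\Lambda_{+}(Y)=0$ remains to be analyzed. Indeed, $Y$ is itself a normal almost homogeneous $G$-variety with the same open orbit $\Omega$, and $\Omega$ contains the open $B$-orbit $O$; hence $Y$ is spherical in the sense of Definition~\ref{defspher}. By the characterization of sphericity recalled above, its function field satisfies $\mathbb{C}(Y)^{B}=\mathbb{C}(\Omega)^{B}=\mathbb{C}$, so $\CC[Y]^{B}\subseteq\mathbb{C}(Y)^{B}=\mathbb{C}$ and therefore $\CC[Y]^{B}=\CC$. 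Consequently the weight criterion yields that $X$ admits the Hartogs phenomenon if and only if $\Lambda_{+}(Y)=0$.

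Next, by the preceding Lemma we have $\Lambda_{+}(Y)=L\cap M$, so it suffices to prove that $L\cap M=\{0\}$ if and only if $L=0$. One implication is immediate. For the converse, the key point is that $L$ is a \emph{rational} cone in $M_{\mathbb{R}}$: it is cut out by the inequalities $\langle a_{D},\lambda\rangle\geq 0$ for $D\in\mathcal{B}(Y)$, and since each $a_{D}$ lies in the lattice $N=\mathrm{Hom}(M,\mathbb{Z})$, these are integral linear inequalities. Because $Y$ is spherical it has finitely many $B$-orbits, hence $\mathcal{B}(Y)$ is finite and $L$ is a rational polyhedral cone; such a cone, if nonzero, is the $\mathbb{R}_{\geq 0}$-span of nonzero lattice vectors and thus contains a nonzero point of $M$. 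Contrapositively, $L\cap M=\{0\}$ forces $L=0$.

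Combining the two steps, $X$ admits the Hartogs phenomenon if and only if $\Lambda_{+}(Y)=L\cap M=\{0\}$, which by the rationality argument holds if and only if $L=0$, as claimed. The only delicate point is the passage from $L\cap M=\{0\}$ to $L=0$, which rests on the integrality of the $a_{D}$ and the finiteness of $\mathcal{B}(Y)$; once rationality of $L$ is in hand the lattice argument is routine, and the reduction $\CC[Y]^{B}=\CC$ is essentially a restatement of sphericity.
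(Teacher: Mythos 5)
Your proof is correct and takes essentially the same route the paper intends: the corollary is obtained by combining the weight criterion with the preceding lemma $\Lambda_{+}(Y)=L\cap M$, and the two reductions you spell out (that $\CC[Y]^{B}\subseteq\CC(Y)^{B}=\CC$ by sphericity of $Y$, and that $L\cap M=\{0\}$ iff $L=0$ because $L$ is a rational polyhedral cone cut out by the finitely many integral functionals $a_{D}$) are exactly the details the paper leaves implicit. Both supplementary arguments are sound.
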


Now we reformulate this criterion in terms of the dual weight lattice $N$.

Define a cone $C$ in the space $N_{\mathbb{R}}$ as $$C:=\mathbb{R}_{\geq 0}\big\langle a_{D}\mid D\in \mathcal{B}(Y)\big\rangle.$$ 

We obtain the following criterion.

\begin{corollary}\label{cormain12}
Let $X$ be a (1,0)-compactifiable spherical $G$-variety. Then $X$ admits the Hartogs phenomenon if and only if $C=N_{\mathbb{R}}$.
\end{corollary}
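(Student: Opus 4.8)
The plan is to deduce this corollary from Corollary~\ref{cormain11}, which already gives the equivalence ``Hartogs phenomenon $\iff L=0$'', by translating the condition $L=0$ into a statement about $C$ via convex duality. The key observation is that $L$ is exactly the dual cone of $C$: by definition
$$L=\{\lambda\in M_\mathbb{R}\mid \langle a_D,\lambda\rangle\geq 0,\ \forall D\in\mathcal{B}(Y)\}=\{\lambda\in M_\mathbb{R}\mid \langle a,\lambda\rangle\geq 0,\ \forall a\in C\}=C^{\vee},$$
where the middle equality holds because the inequality $\langle a,\lambda\rangle\geq 0$ is preserved under nonnegative linear combinations of the generators $a_D$. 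Thus it suffices to prove the purely convex-geometric statement that $C^{\vee}=\{0\}$ if and only if $C=N_\mathbb{R}$.

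First I would record that $C$ is a finitely generated cone, hence a closed polyhedral cone in $N_\mathbb{R}$. This is where I use that $X$ (and therefore $Y$) is spherical: a spherical $G$-variety contains only finitely many $B$-orbits, so the set $\mathcal{B}(Y)$ of prime $B$-stable divisors is finite and $C=\mathbb{R}_{\geq 0}\langle a_D\mid D\in\mathcal{B}(Y)\rangle$ is generated by finitely many vectors. Closedness of $C$ is the one hypothesis needed to invoke the biduality theorem $(C^{\vee})^{\vee}=C$ for closed convex cones.

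With closedness in hand, both implications are immediate. If $C=N_\mathbb{R}$, then for any $\lambda\in C^{\vee}$ and any $a\in N_\mathbb{R}$ both $\langle a,\lambda\rangle\geq 0$ and $\langle -a,\lambda\rangle\geq 0$ hold, forcing $\langle a,\lambda\rangle=0$ for all $a$ and hence $\lambda=0$; thus $L=C^{\vee}=\{0\}$. Conversely, if $L=C^{\vee}=\{0\}$, then biduality gives $C=(C^{\vee})^{\vee}=\{0\}^{\vee}$, and the dual of $\{0\}$ is cut out by the vacuous condition $\langle a,0\rangle\geq 0$, so $\{0\}^{\vee}=N_\mathbb{R}$ and $C=N_\mathbb{R}$. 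Combining this equivalence with Corollary~\ref{cormain11} finishes the proof.

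I expect the only genuine point to verify is the closedness (equivalently, finite generation) of $C$, since the biduality theorem fails for non-closed cones; everything else is a routine unwinding of the definition of the dual cone. A clean alternative that avoids explicitly citing biduality would be to argue directly: if $C\neq N_\mathbb{R}$, then since $C$ is a closed convex cone that is not the whole space, a supporting-hyperplane argument produces a nonzero $\lambda\in M_\mathbb{R}$ with $\langle a,\lambda\rangle\geq 0$ for all $a\in C$, i.e.\ a nonzero element of $L$, giving the contrapositive of one direction.
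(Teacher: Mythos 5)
Your proposal is correct and follows essentially the same route as the paper: the paper's proof also reduces to Corollary~\ref{cormain11} via the identity $C^{\vee}=L$ and the duality equivalence $C=N_{\mathbb{R}}\iff L=0$. You merely make explicit the finite generation (hence closedness) of $C$ needed for biduality, which the paper leaves implicit.
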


\begin{proof}
Since $C^{\vee}=L$, where $C^{\vee}$ is the dual cone of $C$, it follows that $C=N_{\mathbb{R}}$ if and only if $L=0$. The proof of the corollary is complete.
\end{proof}

This criterion can be also formulated in terms of colored fans. 

First we recall the definition of valuation cone (for more details see \cite[Sections 4, 10]{Gandini} or \cite[Chapter 4 and Appendix B]{Tim}). 

Let $v\colon\mathbb{C}(\Omega)\setminus\{0\}\to \mathbb{Q}$ be a discrete $\mathbb{Q}$-valued valuation. The valuation $v$ is called \textit{$G$-invariant} if $v(g.f)=v(f)$ for all $f\in\mathbb{C}(\Omega)$ and for all $g\in G$. The set of $G$-invariant valuations of $\mathbb{C}(\Omega)$ is denoted by $\mathcal{V}(\Omega)$.

Note that a $G$-invariant valuation $v$ defines a point $a_{v}\in N_{\mathbb{Q}}=N\otimes\mathbb{Q}$ by setting $$\langle a_{v},\lambda\rangle:=v(f)$$ for some $f\in \mathbb{C}(\Omega)^{(B)}_{\lambda}$. By \cite[Corollary 4.9]{Gandini} we will regard $\mathcal{V}(\Omega)$ as a subset of $N_{\mathbb{Q}}$. Note that it is a finitely generated convex rational cone of maximal dimension \cite[Corollary 10.6]{Gandini} and it is called \textit{valuation cone of $\Omega$}. By $\mathcal{V}_{\mathbb{R}}(\Omega)$ we denote the cone generated by the set $\mathcal{V}(\Omega)$ in $N_{\mathbb{R}}$.

Now by $\mathcal{B}(\Omega)$ we denote the set of all prime $B$-stable divisors of $\Omega$. For each $D\in\mathcal{B}(\Omega)$ denote by $\overline{D}$ the closure of $D$ in $Y$, where $Y$ is as in section \ref{s4}. Since $v_{D}=v_{\overline{D}}$ (after the identification $\mathbb{C}(\Omega)=\mathbb{C}(Y)$), it follows that we may identify the set $\mathcal{B}(\Omega)$ with the subset $\{\overline{D}\mid D\in \mathcal{B}(\Omega)\}$ of $\mathcal{B}(Y)$. 

We recall the definition of colored cone and colored fan. 

\begin{definition}\indent
\begin{itemize}
\item A colored cone is a pair $(\sigma,\mathcal{F})$ with $\sigma\subset N_{\mathbb{R}}$ and $\mathcal{F}\subset \mathcal{B}(\Omega)$ with the following properties: 
\begin{itemize}
\item $\sigma$ is a convex cone generated by $\{a_{D}\mid D\in\mathcal{F}\}$ and finitely many elements of $\mathcal{V}(\Omega)$;
\item the relative interior of $\sigma$ intersects $\mathcal{V}(\Omega)$ non trivially;
\item $\sigma$ contains no lines and $0\not\in \{a_{D}\mid D\in\mathcal{F}\}$. 
\end{itemize}
\item A colored face of a colored cone $(\sigma,\mathcal{F})$ is a pair $(\sigma',\mathcal{F}')$ such that $\sigma'$ is a face of $\sigma$, the relative interior of $\sigma'$ intersects non trivially $\mathcal{V}(\Omega)$ and $\mathcal{F}'=\{D\in\mathcal{F}\mid a_{D}\in \sigma'\}$;
\item A colored fan is a finite set $\Sigma$ of colored cones with the following properties:
\begin{itemize}
\item every colored face of a colored cone of $\Sigma$ is in $\Sigma$;
\item for any $v\in\mathcal{V}(\Omega)$, there exists at most one cone $(\sigma,\mathcal{F})$ such that $v$ is in the relative interior of $\sigma$.
\end{itemize}
\item The support of a colored fan $\Sigma$ is the set $|\Sigma|:=\bigcup\limits_{(\sigma,\mathcal{F})\in\Sigma}\sigma$.
\end{itemize}
\end{definition}

Let $X$ be a spherical $G$-variety with the open orbit $\Omega$. For any $G$-orbit $Y$ of $X$, denote by $X_Y$ the $G$-stable subset $\{x\in X\mid \overline{G.x}\supset Y\}$ (it is a $G$-stable open subvariety of $X$ containing a unique closed $G$-orbit $Y$). Let $D_{1},\cdots, D_{m}$ be the $G$-stable divisors of $X_Y$. Let $\mathcal{F}$ be the set of $B$-stable but not $G$-stable divisors $D$ in $X$ which contain the closed orbit of $X_Y$. Let $\sigma$ be the cone of $N_{\mathbb{R}}$ generated by points $a_{v}$ for $v\in\mathcal{F}$ and points $a_{D_i}\in N$ for $i\in\{1,\cdots, m\}$. Thus we obtain a colored cone $(\sigma,\mathcal{F})$. Moreover, the set of colored cones constructed in this way forms a colored fan $\Sigma_{X}$.

We have the following Luna--Vust theorem on the classification of $\Omega$-embeddings.

\begin{theorem}
The map $X\to\Sigma_{X}$ is a bijection between the set of isomorphism classes of spherical $G$-varieties (with the same open orbit $\Omega$) and the set of colored fans.
\end{theorem}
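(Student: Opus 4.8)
The plan is to prove the bijection in two stages: first for \emph{simple} embeddings---those possessing a unique closed $G$-orbit---which should correspond to single colored cones, and then to assemble arbitrary embeddings from simple ones exactly as colored fans are assembled from colored cones. The backbone of the whole argument is the local structure theory of spherical varieties, so I would begin by recalling that any simple embedding $X_Y$ (with closed orbit $Y$) contains a canonical affine open $B$-stable chart $X_{Y,B}$ with $X_Y=G\cdot X_{Y,B}$, and that the coordinate ring of this chart is pinned down by its $B$-semiinvariants: a function $f\in\CC(\Omega)^{(B)}$ of weight $\lambda$ is regular on $X_{Y,B}$ precisely when $\langle a_D,\lambda\rangle\geq 0$ for every $G$-stable divisor $D$ through $Y$ and for every color in the associated set $\mathcal{F}$.

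The heart of the proof is the correspondence between simple embeddings and colored cones. In one direction the construction is the one displayed just before the statement: from $X_Y$ read off the cone $\sigma\subset N_{\RR}$ spanned by the $a_D$ of the $G$-stable divisors through $Y$ together with the $a_D$ for $D\in\mathcal{F}$. I would verify that the three defining conditions of a colored cone hold---strict convexity and $0\notin\{a_D\mid D\in\mathcal{F}\}$ from normality and separatedness of $X_Y$, and nonempty intersection of the relative interior with $\mathcal{V}(\Omega)$ from the existence of the closed orbit. Conversely, given an abstract colored cone $(\sigma,\mathcal{F})$, I would reconstruct the chart as the spectrum of the algebra spanned by those $f\in\CC[\Omega]$ lying in $\CC(\Omega)^{(B)}$ whose weights fall in $\sigma^{\vee}\cap M$ and that are regular along the colors of $\mathcal{F}$, set $X_Y:=G\cdot X_{Y,B}$, and check that this is a normal spherical embedding whose colored cone is the one we started from. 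That these two constructions are mutually inverse, up to the unique $G$-equivariant isomorphism fixing $\Omega$, gives the bijection in the simple case.

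The passage from cones to fans is then formal. Every spherical embedding $X$ is covered by its simple open pieces $X_Y$, one for each $G$-orbit, and an inclusion $X_{Y'}\hookrightarrow X_Y$ of such pieces corresponds exactly to $(\sigma',\mathcal{F}')$ being a colored face of $(\sigma,\mathcal{F})$; thus the combinatorial datum of $X$ is precisely the collection $\Sigma_X$ of colored cones with the face-compatibility built into the definition of a fan. For surjectivity I would build a simple embedding for each maximal colored cone of a given fan and glue them along the open subvarieties prescribed by their common colored faces; injectivity follows because the fan recovers each simple piece up to canonical isomorphism and the gluing maps are forced by the identifications on $\Omega$.

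I expect the main obstacle to be twofold and to sit entirely inside the simple case and the separatedness check. First, proving that the reconstruction from an arbitrary colored cone really yields a \emph{normal} variety carrying a closed $G$-orbit---rather than merely a $B$-chart---requires the full local structure theorem and a careful analysis of how the $G$-translates of $X_{Y,B}$ patch together. Second, and most subtly, one must show that the glued prescheme is separated, and that this separatedness is equivalent to the fan axiom that no $G$-invariant valuation lies in the relative interiors of two distinct cones; this is where the valuation cone $\mathcal{V}(\Omega)$ enters essentially, through the valuative criterion applied to $G$-invariant valuations.
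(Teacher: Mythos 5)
The paper does not prove this statement: it is the classical Luna--Vust classification of spherical embeddings, quoted as known background with references to Gandini and Timashev. Your outline --- reduction to simple embeddings via the local structure theorem, the correspondence between simple embeddings and colored cones, gluing along colored faces, and separatedness via $G$-invariant valuations meeting relative interiors of at most one cone --- is precisely the standard proof found in those references, so it is consistent with (indeed reconstructs) the argument the paper implicitly relies on.
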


Let us now give few properties of spherical varieties via the classification. 
\begin{remark}\label{propfanvar}
Let $X_{\Sigma}$ be a spherical variety with the open orbit $\Omega$ and with the colored fan $\Sigma$.  
\begin{enumerate}
\item $X_{\Sigma}$ is compact if and only if $\Sigma$ is complete (i.e. $|\Sigma|\supset\mathcal{V}_{\mathbb{R}}(\Omega)$); 
\item There is a bijective correspondence between the $G$-orbits in $X_{\Sigma}$ and the colored cones in $\Sigma$. 
\item If $W_{1},W_{2}$ are $G$-orbits with corresponding colored cones $(\sigma_{1},\mathcal{F}_1)$, $(\sigma_{2},\mathcal{F}_2)$ then $W_{1}\subset \overline{W_{2}}$ if and only if $(\sigma_{2},\mathcal{F}_2)$ is a colored face of the colored cone $(\sigma_{1},\mathcal{F}_1)$.
\end{enumerate}
\end{remark}

Now we prove the following lemma.

\begin{lemma}\label{lemconnect}
Let $X_{\Sigma}$ be a noncompact spherical variety with a colored fan $\Sigma$. Then $X_{\Sigma}$ is (1,0)-compactifiable if and only if the open set $\mathcal{V}_{\mathbb{R}}(\Omega)\setminus |\Sigma|$ is connected.
\end{lemma}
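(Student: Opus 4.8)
The plan is to translate the analytic condition of $(1,0)$-compactifiability into the combinatorics of colored fans and then into the connectedness of the open region $R:=\mathcal{V}_{\mathbb{R}}(\Omega)\setminus|\Sigma|$. By the Luna--Vust theorem, $G$-equivariant compactifications $X'$ of $X_{\Sigma}$ correspond bijectively to \emph{complete} colored fans $\Sigma'$ containing $\Sigma$ as a subfan, where completeness means $|\Sigma'|\supseteq\mathcal{V}_{\mathbb{R}}(\Omega)$ by Remark~\ref{propfanvar}(1). Any such $X'$ is a complete spherical variety, hence rational with rational singularities, so that for a smooth toroidal resolution $\widetilde{X}\to X'$ one has $H^{1}(X',\mathcal{O})\cong H^{1}(\widetilde{X},\mathcal{O})=0$; thus condition~(3) of Definition~\ref{maindef} is automatic, and $(1,0)$-compactifiability is equivalent to the existence of a completion $\Sigma'\supseteq\Sigma$ for which $Z:=X_{\Sigma'}\setminus X_{\Sigma}$ is connected.

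Next I would describe $Z$ combinatorially. Since $\Sigma$ is a face-closed subfan of $\Sigma'$ and $X_{\Sigma}$ is open in $X_{\Sigma'}$, the set $Z$ is the closed union $\bigcup_{\tau\in\Sigma'\setminus\Sigma}O_{\tau}$ of $G$-orbits, and by Remark~\ref{propfanvar}(3) its irreducible components are the orbit closures $\overline{O_{\tau}}$ for the cones $\tau$ minimal in $\Sigma'\setminus\Sigma$. As each component is irreducible, hence connected, $Z$ is connected if and only if the intersection graph of these components is connected, and $\overline{O_{\tau_1}}\cap\overline{O_{\tau_2}}\neq\emptyset$ exactly when $\tau_1,\tau_2$ are common faces of some cone of $\Sigma'$, that is, when they are linked by a shared face whose orbit again lies in $Z$. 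The key geometric observation is that a new maximal cone $\gamma$ meets $|\Sigma|$ only along faces, so $\mathrm{relint}(\gamma)\cap\mathcal{V}_{\mathbb{R}}(\Omega)\subseteq R$; hence each new cone lies over a single connected component of $R$, and the intersection graph of $Z$ mirrors the facet-adjacency graph of the cones subdividing $R$.

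For the direction ``$R$ connected $\Rightarrow$ compactifiable'' I would complete $\Sigma$ by subdividing the closed region $\overline{R}$ inside $\mathcal{V}_{\mathbb{R}}(\Omega)$ into colored cones glued to $\Sigma$ along $\partial R\cap|\Sigma|$; connectedness of $R$ lets one choose the subdivision so that its new maximal cones form a connected facet-adjacency graph, each interior wall $\tau\subseteq R$ contributing an orbit $O_{\tau}\subseteq Z$ that links the two adjacent components, whence $Z$ is connected. For the converse I argue contrapositively: if $R=A\sqcup B$ is disconnected, then for \emph{any} completion each new cone lies over $A$ or over $B$, and a face $\tau$ shared by an $A$-cone $\gamma_A$ and a $B$-cone $\gamma_B$ satisfies $\tau\cap\mathcal{V}_{\mathbb{R}}(\Omega)\subseteq\overline{A}\cap\overline{B}\subseteq|\Sigma|$, forcing $\tau\in\Sigma$ and $O_{\tau}\subseteq X_{\Sigma}$; thus no orbit of $Z$ links the $A$-side to the $B$-side, so $Z$ is disconnected.

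The main obstacle is precisely this last translation between the Zariski topology of the boundary $Z$ and the convex geometry of the fan. The delicate points are: (i) handling colors, since colored cones may extend outside $\mathcal{V}_{\mathbb{R}}(\Omega)$, so all adjacency statements must be read off the valuation parts $\gamma\cap\mathcal{V}_{\mathbb{R}}(\Omega)$; (ii) verifying that a cone of $\Sigma'$ whose valuation part lies in $|\Sigma|$ is actually a cone of $\Sigma$, so that its orbit is not in $Z$, which rests on the fan axiom that cones meet only along faces; and (iii) passing between connectedness of the open region $R$ and of its closure $\overline{R}$, which is harmless here because $R$ is the complement of a closed polyhedral set in the convex cone $\mathcal{V}_{\mathbb{R}}(\Omega)$. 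Once these are settled, combining the two directions yields the stated equivalence.
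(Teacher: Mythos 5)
Your proposal is correct and follows essentially the same route as the paper: both reduce $(1,0)$-compactifiability to the connectedness of $Z=X_{\Sigma'}\setminus X_{\Sigma}$ (with $H^{1}(X_{\Sigma'},\mathcal{O})=0$ automatic for complete spherical varieties), decompose $Z$ into the $G$-orbits indexed by $\Sigma'\setminus\Sigma$, and translate adjacency of orbit closures into the face relations of colored cones and thence into connectedness of $\mathcal{V}_{\mathbb{R}}(\Omega)\setminus|\Sigma|$. The only divergences are cosmetic: you justify $H^{1}=0$ via rationality and rational singularities where the paper cites Brion's extension of Borel--Weil plus GAGA, and you spell out the combinatorial-to-topological dictionary (including the color subtleties) in more detail than the paper, which simply asserts the final equivalence.
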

\begin{proof}
Let $X_{\Sigma'}$ be a compactification of $X_{\Sigma}$. In this case we have $\Sigma\subset \Sigma'$.

First, using \cite[Corollaire 1]{Brion1} and the GAGA principle we have $H^{1}(X_{\Sigma'},\mathcal{O})=0$ for a compact spherical variety $X_{\Sigma'}$. So, we have to prove that $Z:=X_{\Sigma'}\setminus X_{\Sigma}$ is connected if and only if $\mathcal{V}_{\mathbb{R}}(\Omega)\setminus |\Sigma|$ is connected.

Now by $O(\sigma,\mathcal{F})$ we denote the $G$-orbit corresponding to the colored cone $(\sigma,\mathcal{F})$. Since the set $Z=X_{\Sigma'}\setminus X_{\Sigma}$ is $G$-stable, it is a union of $G$-orbits. Namely, we have 
$$Z=\bigcup\limits_{(\sigma,\mathcal{F})\in \Sigma'\setminus\Sigma}O(\sigma,\mathcal{F}).$$

Further, $Z$ is connected if and only if for any two $G$-orbits $V=O(\sigma,\mathcal{F}), V'=O(\sigma',\mathcal{F}')$ such that $(\sigma,\mathcal{F}), (\sigma',\mathcal{F}')\in\Sigma'\setminus\Sigma$ there exists a sequence of $G$-orbits $\{V_{i}=O(\sigma_i,\mathcal{F}_i)\}_{i=0}^{n}$ with the following properties:
\begin{enumerate}
\item $V_0=V, V_{n}=V'$;
\item $(\sigma_i,\mathcal{F}_i)\in\Sigma'\setminus\Sigma$;
\item $\overline{V_{i}}\cap\overline{V_{i+1}}\neq\emptyset$.
\end{enumerate}

But this is equivalent to the fact that for any two colored cones $(\sigma,\mathcal{F}), (\sigma',\mathcal{F}')\in\Sigma'\setminus\Sigma$ there exists a sequence of colored cones $\{C_{i}=(\sigma_i,\mathcal{F}_i)\}_{i=0}^{n}$ with the following properties:

\begin{enumerate}
\item $C_0=(\sigma,\mathcal{F}), C_n=(\sigma',\mathcal{F}')$;
\item $C_i=(\sigma_i,\mathcal{F}_i)\in\Sigma'\setminus\Sigma$;
\item For cones $\sigma_{i}$ and $\sigma_{i+1}$ there exists a colored cone $(\sigma_{i,i+1},\mathcal{F}_{i,i+1})\in\Sigma'\setminus\Sigma$ such that $\sigma_{i}$ and $\sigma_{i+1}$ are its faces. 
\end{enumerate}

This is equivalent to $\mathcal{V}_{\mathbb{R}}(\Omega)\setminus |\Sigma|$ being a connected set. The proof of the lemma is complete.
\end{proof}

We recall that $$Y=X_{\Sigma'}\setminus \bigcup\limits_{D\in\mathcal{G}(X_{\Sigma'}),D\subset X_{\Sigma}} D$$ where $\mathcal{G}(X_{\Sigma'})$ is the set of all prime $G$-stable divisors on $X_{\Sigma'}$.

Let $\mathcal{OG}_{k}(X_{\Sigma'})$ be the set of all $k$-codimensional $G$-orbit on $X_{\Sigma'}$, and $$\mathcal{OG}(X_{\Sigma'}):=\bigcup\limits_{k=1}^{\dim X_{\Sigma'}}\mathcal{OG}_{k}(X_{\Sigma'}).$$

Now we consider the spherical variety $$\widehat{Y}:=X_{\Sigma'}\setminus\bigcup\limits_{O\in\mathcal{OG}(X_{\Sigma'}),\overline{O}\subset X_{\Sigma}}O,$$ where $\overline{O}$ is the closure of $O$ in $X_{\Sigma'}$. Since $\mathcal{G}(X_{\Sigma'})=\{\overline{O}\mid O\in \mathcal{OG}_{1}(X_{\Sigma'})\}$, it follows that $\widehat{Y}\subset Y$ and $codim(Y\setminus\widehat{Y})>1$. Therefore $\mathbb{C}[Y]=\mathbb{C}[\widehat{Y}]$, $\mathcal{B}(Y)=\mathcal{B}(\widehat{Y})$. 

The colored fan $\Sigma_{\widehat{Y}}$ of $\widehat{Y}$ is obtained as follows.
 
\begin{lemma}\label{lemmaonY}
$$\Sigma_{\widehat{Y}}=\{(\tau,\mathcal{F}')\in \Sigma'\mid (\tau,\mathcal{F}') \text{ is a colored face of a colored cone in } \Sigma'\setminus\Sigma \}.$$
\end{lemma}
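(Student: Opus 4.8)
The plan is to realize $\widehat Y$ as a $G$-stable open subvariety of $X_{\Sigma'}$ and then read off its colored fan by translating the condition $\overline{O}\subset X_{\Sigma}$ into the combinatorics of colored faces. First I would check that the removed set $\bigcup_{O\in\mathcal{OG}(X_{\Sigma'}),\,\overline{O}\subset X_{\Sigma}}O$ is closed and $G$-stable: if $\overline{O}\subset X_{\Sigma}$ and $W$ is an orbit with $W\subset\overline{O}$, then $\overline{W}\subset\overline{O}\subset X_{\Sigma}$, so $W$ is removed as well, whence the removed set is a union of orbit closures and is closed. Thus $\widehat Y$ is a $G$-stable open subvariety of $X_{\Sigma'}$; since $X_{\Sigma}$ is noncompact we have $\Sigma'\setminus\Sigma\neq\emptyset$, and the open orbit $\Omega$, being dense in the compact variety $X_{\Sigma'}$, is not removed, so $\widehat Y$ is spherical with the same open orbit $\Omega$. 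By the Luna--Vust theorem together with Remark \ref{propfanvar}(2), its colored fan $\Sigma_{\widehat Y}$ is the subfan of $\Sigma'$ consisting precisely of the colored cones of those $G$-orbits of $X_{\Sigma'}$ that lie in $\widehat Y$; so it remains only to determine which orbits are kept.

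Next I would translate the keeping condition. Let $O$ be a $G$-orbit with colored cone $(\tau,\mathcal{F}')\in\Sigma'$. By definition $O$ is removed exactly when $\overline{O}\subset X_{\Sigma}$, i.e. when $\overline{O}\cap Z=\emptyset$ for $Z=X_{\Sigma'}\setminus X_{\Sigma}$. As established in the proof of Lemma \ref{lemconnect}, $Z$ is the union of the $G$-orbits whose colored cones lie in $\Sigma'\setminus\Sigma$. On the other hand, the orbits inside $\overline{O}$ are governed by Remark \ref{propfanvar}(3): an orbit $W$ satisfies $W\subset\overline{O}$ if and only if $(\tau,\mathcal{F}')$ is a colored face of the colored cone of $W$. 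Combining, $\overline{O}$ meets $Z$ if and only if $(\tau,\mathcal{F}')$ is a colored face of some colored cone belonging to $\Sigma'\setminus\Sigma$.

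Putting this together, an orbit $O$ survives in $\widehat Y$ if and only if its colored cone $(\tau,\mathcal{F}')$ is a colored face of a colored cone in $\Sigma'\setminus\Sigma$, which is exactly the asserted description of $\Sigma_{\widehat Y}$. As a consistency check, this collection is automatically closed under passing to colored faces (by transitivity of the colored-face relation), as required for a colored fan, and it contains all of $\Sigma'\setminus\Sigma$ itself, matching the inclusion $Z\subset\widehat Y$.

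The hard part will be the bookkeeping around the two uses of Remark \ref{propfanvar}(3): one must keep straight that smaller orbits correspond to larger cones, so that $W\subset\overline{O}$ corresponds to $(\tau,\mathcal{F}')$ being a \emph{face} of the cone of $W$ rather than the reverse, and one must verify that the colored cone attached to a surviving orbit is the same whether computed inside $X_{\Sigma'}$ or inside the open subvariety $\widehat Y$. The latter holds because, for $O\subset\widehat Y$, the $G$-stable open subvariety $X_{O}=\{x\in X_{\Sigma'}\mid \overline{G.x}\supset O\}$ is itself contained in $\widehat Y$: its orbits have colored cones that are colored faces of $(\tau,\mathcal{F}')$, hence, by transitivity, again colored faces of a cone in $\Sigma'\setminus\Sigma$. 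Consequently the $G$-stable divisors and colors of $X_{O}$ that define the cone of $O$ are unchanged upon restriction, which legitimizes identifying $\Sigma_{\widehat Y}$ with a subset of $\Sigma'$ and completes the argument.
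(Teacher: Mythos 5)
Your proposal is correct and follows essentially the same route as the paper: both characterize $(\tau,\mathcal{F}')\in\Sigma_{\widehat{Y}}$ by the condition $\overline{O(\tau,\mathcal{F}')}\cap Z\neq\emptyset$ and then translate this, via the orbit--colored-cone correspondence and the description of $Z$ as the union of orbits with cones in $\Sigma'\setminus\Sigma$, into the colored-face condition. The only difference is that you carefully justify the steps the paper dismisses as ``obvious'' (closedness of the removed set, that $\Sigma_{\widehat Y}$ is read off as a subfan of $\Sigma'$, and the direction of the face relation in Remark \ref{propfanvar}(3)), all of which you handle correctly.
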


\begin{proof}
Consider a colored cone $(\tau,\mathcal{F}')\in\Sigma'$ and let $O=O(\tau,\mathcal{F}')$ be the corresponding $G$-orbit. Recall that the set $Z=X_{\Sigma'}\setminus X_{\Sigma}$ is a union of $G$-orbits, namely: $$Z=\bigcup\limits_{(\sigma,\mathcal{F})\in \Sigma'\setminus\Sigma}O(\sigma,\mathcal{F}).$$

We obtain the following obvious statements: $(\tau,\mathcal{F}')\in \Sigma_{\widehat{Y}}$ if and only if $\overline{O}\cap Z\neq\emptyset $ if and only if there exists a colored cone $(\sigma,\mathcal{F})\in\Sigma'\setminus\Sigma$ such that $(\tau,\mathcal{F}')$ is a colored face of  $(\sigma,\mathcal{F})$. The proof of the lemma is complete.
\end{proof} 

\begin{lemma} $|\Sigma_{\widehat{Y}}|\cap\mathcal{V}_{\mathbb{R}}(\Omega)=\overline{\mathcal{V}_{\mathbb{R}}(\Omega)\setminus |\Sigma|}.$
\end{lemma}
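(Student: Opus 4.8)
The plan is first to rewrite the left-hand side as a plain union of cones, and then to combine the completeness of $\Sigma'$ with the defining ``fan'' axiom of a colored fan. Write $\mathcal{V}:=\mathcal{V}_{\mathbb{R}}(\Omega)$. By Lemma \ref{lemmaonY}, $\Sigma_{\widehat{Y}}$ is the set of all colored faces of colored cones in $\Sigma'\setminus\Sigma$; since each colored cone $(\sigma,\mathcal{F})$ is a colored face of itself (its relative interior meets $\mathcal{V}(\Omega)$ by the colored-cone axiom, and $\mathcal{F}=\{D\in\mathcal{F}\mid a_{D}\in\sigma\}$ trivially), and since a cone contains each of its faces, passing to supports gives
$$|\Sigma_{\widehat{Y}}|=\bigcup_{(\sigma,\mathcal{F})\in\Sigma'\setminus\Sigma}\sigma.$$
Because $X_{\Sigma'}$ is compact, $\Sigma'$ is complete, so $\mathcal{V}\subseteq|\Sigma'|=|\Sigma|\cup|\Sigma_{\widehat{Y}}|$ by Remark \ref{propfanvar}(1). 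Hence every point of $\mathcal{V}\setminus|\Sigma|$ lies in $|\Sigma_{\widehat{Y}}|$, so $\mathcal{V}\setminus|\Sigma|\subseteq|\Sigma_{\widehat{Y}}|\cap\mathcal{V}$; as $|\Sigma_{\widehat{Y}}|$ (a finite union of closed cones) and $\mathcal{V}$ are closed, taking closures yields the inclusion $\overline{\mathcal{V}\setminus|\Sigma|}\subseteq|\Sigma_{\widehat{Y}}|\cap\mathcal{V}$.

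For the reverse inclusion I would run a segment argument confined to the interior of $\mathcal{V}$. Let $x\in|\Sigma_{\widehat{Y}}|\cap\mathcal{V}$, so $x$ lies in some cone $\sigma$ with $(\sigma,\mathcal{F})\in\Sigma'\setminus\Sigma$. Since $\Sigma'$ is complete (its cones subdivide the full-dimensional cone $\mathcal{V}$), I may enlarge $\sigma$ to a full-dimensional cone $\sigma_{\max}$ of $\Sigma'$ having $\sigma$ as a face; as $\sigma\notin\Sigma$ and $\Sigma$ is closed under taking colored faces, necessarily $\sigma_{\max}\in\Sigma'\setminus\Sigma$, and $x\in\sigma\subseteq\sigma_{\max}$. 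Because $\sigma_{\max}$ is full-dimensional its interior is open in $N_{\mathbb{R}}$ and meets $\mathcal{V}(\Omega)$, hence also meets the interior of the full-dimensional cone $\mathcal{V}$; I then pick a rational point $y\in\mathrm{int}(\sigma_{\max})\cap\mathrm{int}(\mathcal{V})$. For $t\in(0,1]$ the points $z_{t}:=(1-t)x+ty$ lie in $\mathrm{int}(\sigma_{\max})\cap\mathrm{int}(\mathcal{V})$ (a convex set is carried into its interior by the segment from any of its points to an interior point). It therefore suffices to prove
$$\mathrm{int}(\sigma_{\max})\cap\mathrm{int}(\mathcal{V})\cap|\Sigma|=\emptyset, \qquad (\star)$$
for then $z_{t}\in\mathcal{V}\setminus|\Sigma|$ for every $t\in(0,1]$, and letting $t\to0^{+}$ gives $x=\lim_{t\to0^{+}}z_{t}\in\overline{\mathcal{V}\setminus|\Sigma|}$.

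The substance of the proof is $(\star)$, and this is where the colored-fan axiom enters. Suppose some $z\in\mathrm{int}(\sigma_{\max})\cap\mathrm{int}(\mathcal{V})$ lay in $|\Sigma|$. Then $z$ lies in the relative interior of some face $\tau_{0}$ of a cone of $\Sigma$; since $\mathrm{relint}(\tau_{0})$ meets the open set $\mathrm{int}(\mathcal{V})$ it meets $\mathcal{V}(\Omega)$, so $\tau_{0}$ is a colored face and $\tau_{0}\in\Sigma$, whence $\tau_{0}\neq\sigma_{\max}$. Now $\mathrm{relint}(\sigma_{\max})\cap\mathrm{relint}(\tau_{0})\cap\mathrm{int}(\mathcal{V})$ is nonempty and relatively open in the rational subspace spanned by $\tau_{0}$, so it contains a rational point $w\in\mathcal{V}(\Omega)$ lying in the relative interiors of the two \emph{distinct} cones $\sigma_{\max}$ and $\tau_{0}$ of $\Sigma'$, contradicting the uniqueness clause in the definition of a colored fan. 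The main obstacle is precisely this bridge from the rational axiom to the real points of $\mathcal{V}$: a priori $x$ and the segment could touch the boundary $\partial\mathcal{V}$, where the axiom says nothing. The reduction to a full-dimensional cone $\sigma_{\max}$ together with the choice of $y$ in $\mathrm{int}(\mathcal{V})$ is exactly the device that pushes the whole argument into the interior of $\mathcal{V}$, where density of rational points makes the uniqueness axiom directly applicable; carrying out this reduction carefully is the technical heart of the proof.
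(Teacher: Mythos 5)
Your first paragraph and your forward inclusion match the paper's own argument step for step (the paper in fact only writes out the inclusion $\overline{\mathcal{V}_{\mathbb{R}}(\Omega)\setminus|\Sigma|}\subseteq|\Sigma_{\widehat{Y}}|\cap\mathcal{V}_{\mathbb{R}}(\Omega)$ and leaves the converse implicit, so your attempt to prove both directions is welcome). Your reverse inclusion, however, rests on one unjustified step: the assertion that, ``since $\Sigma'$ is complete,'' every cone $\sigma$ of $\Sigma'$ is a face of a full-dimensional cone $\sigma_{\max}$ of $\Sigma'$. A colored fan is not a subdivision of $\mathcal{V}_{\mathbb{R}}(\Omega)$ in the toric sense: cones may protrude outside $\mathcal{V}_{\mathbb{R}}(\Omega)$, two cones need not intersect along a common face, and the only non-overlap axiom is the uniqueness clause, which constrains relative interiors only at points of $\mathcal{V}(\Omega)$. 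Completeness says only that $|\Sigma'|\supseteq\mathcal{V}_{\mathbb{R}}(\Omega)$. The fact you invoke is true, but it needs a proof (take $\tau$ maximal among cones of $\Sigma'$ having $\sigma$ as a face; if $\tau$ were not full-dimensional, approximate a point $v\in\mathrm{relint}(\tau)\cap\mathcal{V}(\Omega)$ by points of $\mathcal{V}_{\mathbb{R}}(\Omega)\setminus\tau$, use finiteness of $\Sigma'$ and closedness of cones to find a single cone $\kappa$ containing infinitely many of them together with $v$, and then the uniqueness axiom to conclude that $\tau$ is a proper face of $\kappa$, contradicting maximality). As written, this is a genuine gap, and the parenthetical justification you give (``its cones subdivide the full-dimensional cone $\mathcal{V}$'') is not a property that follows from the definitions without work.

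The gap is also avoidable, because the full-dimensional cone is not needed. Run your uniqueness argument directly on $\sigma$ itself: if some $v\in\mathrm{relint}(\sigma)\cap\mathcal{V}(\Omega)$ lay in $|\Sigma|$, then $v$ would lie in the relative interior of a colored face $\tau_{0}$ of a cone of $\Sigma$, so $\tau_{0}\in\Sigma\subseteq\Sigma'$, and the uniqueness axiom applied to $v$ forces $\sigma=\tau_{0}\in\Sigma$, a contradiction; hence $\mathrm{relint}(\sigma)\cap\mathcal{V}(\Omega)\subseteq\mathcal{V}_{\mathbb{R}}(\Omega)\setminus|\Sigma|$. This set is non-empty by the colored-cone axiom, and it is dense in $\sigma\cap\mathcal{V}_{\mathbb{R}}(\Omega)$: the convex set $\mathrm{relint}(\sigma)\cap\mathcal{V}_{\mathbb{R}}(\Omega)$ is cut out by finitely many rational equalities and (strict or non-strict) inequalities, so its rational points --- which lie in $\mathcal{V}(\Omega)$ --- are dense in it, and your segment trick, joining an arbitrary $x\in\sigma\cap\mathcal{V}_{\mathbb{R}}(\Omega)$ to a point of $\mathrm{relint}(\sigma)\cap\mathcal{V}(\Omega)$, shows that $\mathrm{relint}(\sigma)\cap\mathcal{V}_{\mathbb{R}}(\Omega)$ is dense in $\sigma\cap\mathcal{V}_{\mathbb{R}}(\Omega)$. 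Together these give $\sigma\cap\mathcal{V}_{\mathbb{R}}(\Omega)\subseteq\overline{\mathcal{V}_{\mathbb{R}}(\Omega)\setminus|\Sigma|}$ for every $(\sigma,\mathcal{F})\in\Sigma'\setminus\Sigma$, which is the reverse inclusion, with no reference to full-dimensional cones.
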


\begin{proof}
By Lemma \ref{lemmaonY} we obtain that $$|\Sigma_{\widehat{Y}}|=\bigcup\limits_{(\sigma,\mathcal{F})\in\Sigma'\setminus\Sigma}\sigma.$$

A point $p\in\mathcal{V}_{\mathbb{R}}(\Omega)$ belongs to $\overline{\mathcal{V}_{\mathbb{R}}(\Omega)\setminus |\Sigma|}$  if and only if there exists a sequence $\{p_n\}$ of points $p_n\in \mathcal{V}_{\mathbb{R}}(\Omega)\setminus |\Sigma|$ such that $p_{n}\to p$ as $n\to\infty$. We can assume that every $p_n$ belongs to $\sigma\cap\mathcal{V}_{\mathbb{R}}(\Omega)$ for a colored cone $(\sigma,\mathcal{F})\in \Sigma'\setminus\Sigma$. We obtain that $p_{n}\in |\Sigma_{\widehat{Y}}|\cap\mathcal{V}_{\mathbb{R}}(\Omega)$. Since $|\Sigma_{\widehat{Y}}|\cap\mathcal{V}_{\mathbb{R}}(\Omega)$ is a closed set, it follows that $p\in |\Sigma_{\widehat{Y}}|\cap\mathcal{V}_{\mathbb{R}}(\Omega)$. The proof of the lemma is complete.
\end{proof}

We recall that $$C=\mathbb{R}_{\geq 0}\big\langle a_{D}\mid  D\in \mathcal{B}(Y)\big\rangle\subset N_{\RR}.$$

Now we prove the following lemma.

\begin{lemma}\label{lemmaonC}
$C=\mathbb{R}_{\geq 0}\big\langle\overline{\mathcal{V}_{\mathbb{R}}(\Omega)\setminus |\Sigma|}\cup \{a_{D}\mid D\in \mathcal{B}(\Omega)\}\big\rangle. $

\end{lemma}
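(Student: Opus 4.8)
The goal is to identify the cone $C=\mathbb{R}_{\geq 0}\langle a_D \mid D\in\mathcal{B}(Y)\rangle$ with $\mathbb{R}_{\geq 0}\langle\overline{\mathcal{V}_{\mathbb{R}}(\Omega)\setminus|\Sigma|}\cup\{a_D\mid D\in\mathcal{B}(\Omega)\}\rangle$. The plan is to exploit the decomposition recorded just before this lemma: $\mathcal{B}(Y)$ (equivalently $\mathcal{B}(\widehat{Y})$) splits into the $B$-stable-but-not-$G$-stable divisors and the $G$-stable divisors of $\widehat{Y}$. For the first type, the remark following the definition of $\mathcal{B}(\Omega)$ shows that these are exactly the closures $\overline{D}$ of the divisors $D\in\mathcal{B}(\Omega)$ of the open orbit, contributing precisely the points $\{a_D\mid D\in\mathcal{B}(\Omega)\}$. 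So the whole problem reduces to showing that the $G$-stable divisors of $\widehat{Y}$ contribute (up to taking the cone they generate together with the colored-divisor rays) exactly the rays in $\overline{\mathcal{V}_{\mathbb{R}}(\Omega)\setminus|\Sigma|}$.

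First I would invoke Remark \ref{propfanvar}(2)--(3) together with Lemma \ref{lemmaonY}: the $G$-stable prime divisors of $\widehat{Y}$ correspond to the codimension-one $G$-orbits of $\widehat{Y}$, i.e. to the one-dimensional colored cones (rays) of the fan $\Sigma_{\widehat{Y}}$, and by Lemma \ref{lemmaonY} these are precisely the rays of $\Sigma'\setminus\Sigma$ that lie in (or are faces of cones in) $\Sigma'\setminus\Sigma$. Each such $G$-stable divisor $D_i$ corresponds to a point $a_{D_i}\in N$ lying in the valuation cone $\mathcal{V}(\Omega)$ (since $G$-stable divisors give $G$-invariant valuations), and these generating rays lie on the boundary of $|\Sigma|$ inside $\mathcal{V}_{\mathbb{R}}(\Omega)$. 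I would then use the preceding lemma, $|\Sigma_{\widehat{Y}}|\cap\mathcal{V}_{\mathbb{R}}(\Omega)=\overline{\mathcal{V}_{\mathbb{R}}(\Omega)\setminus|\Sigma|}$, to conclude that the cone generated by these $G$-stable rays together with the relevant portion of the valuation cone is exactly $\mathbb{R}_{\geq 0}\langle\overline{\mathcal{V}_{\mathbb{R}}(\Omega)\setminus|\Sigma|}\rangle$.

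Assembling the two inclusions gives the result. For $\subseteq$: every generator $a_D$ with $D\in\mathcal{B}(Y)$ is either some $a_D$ with $D\in\mathcal{B}(\Omega)$, or the point $a_{D_i}$ of a $G$-stable divisor of $\widehat{Y}$, which by the previous lemma lies in $\overline{\mathcal{V}_{\mathbb{R}}(\Omega)\setminus|\Sigma|}$; hence $C$ is contained in the right-hand cone. For $\supseteq$: the points $a_D$ with $D\in\mathcal{B}(\Omega)$ are among the generators of $C$ by the identification $\mathcal{B}(\Omega)\hookrightarrow\mathcal{B}(Y)$, and $\overline{\mathcal{V}_{\mathbb{R}}(\Omega)\setminus|\Sigma|}=|\Sigma_{\widehat{Y}}|\cap\mathcal{V}_{\mathbb{R}}(\Omega)$ is generated by the rays $a_{D_i}$ of the $G$-stable divisors of $\widehat{Y}$ (plus colored rays already in $\mathcal{B}(\Omega)$), all of which are generators of $C$.

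The main obstacle I anticipate is the $\supseteq$ direction, specifically verifying that the entire closed set $\overline{\mathcal{V}_{\mathbb{R}}(\Omega)\setminus|\Sigma|}$—not merely its extremal rays coming from codimension-one orbits—is recovered inside the cone generated by $\{a_{D_i}\}$. One must be careful that a face of a cone in $\Sigma'\setminus\Sigma$ may itself be colored (lie in $\Sigma$), so the generating rays of $|\Sigma_{\widehat{Y}}|\cap\mathcal{V}_{\mathbb{R}}(\Omega)$ could include colored divisors from $\mathcal{B}(\Omega)$ as well as genuine $G$-stable ones; tracking this bookkeeping through Lemma \ref{lemmaonY} and confirming that taking $\mathbb{R}_{\geq 0}\langle\,\cdot\,\rangle$ of the union absorbs any lower-dimensional faces is the delicate point.
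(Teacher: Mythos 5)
Your plan is correct and follows essentially the same route as the paper: the same splitting of $\mathcal{B}(Y)=\mathcal{B}(\widehat{Y})$ into $G$-stable divisors and closures of divisors from $\mathcal{B}(\Omega)$, the same identification of the $G$-stable contribution with $|\Sigma_{\widehat{Y}}|\cap\mathcal{V}_{\mathbb{R}}(\Omega)=\overline{\mathcal{V}_{\mathbb{R}}(\Omega)\setminus|\Sigma|}$ via the preceding lemmas, and the same resolution of the delicate point (the paper handles it by noting that any generating ray of $\Sigma_{\widehat{Y}}$ meeting $N_{\mathbb{R}}\setminus\mathcal{V}(\Omega)$ comes from a color, hence is already among $\{a_{D}\mid D\in\mathcal{B}(\Omega)\}$). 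The only slip is your aside that the $G$-stable rays ``lie on the boundary of $|\Sigma|$''; what you actually need, and correctly use, is that they lie in $\overline{\mathcal{V}_{\mathbb{R}}(\Omega)\setminus|\Sigma|}$.
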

\begin{proof}

Note that $\mathcal{B}(\widehat{Y})$ is the union of the set $\mathcal{G}(\widehat{Y})$ of prime $G$-stable divisors of $\widehat{Y}$ and the set $\{\overline{D}\mid D\in\mathcal{B}(\Omega)\}$, where $\overline{D}$ is the closure in $\widehat{Y}$ of a prime $B$-stable divisor $D\subset \Omega$. 

By $\mathcal{F}(\widehat{Y})$ we denote the union of all subsets $\mathcal{F}\subset\mathcal{B}(\Omega)$ such that $(\sigma,\mathcal{F})\in \Sigma_{\widehat{Y}}$. 

Thus we have the following equalities:
\begin{multline*}
C=\mathbb{R}_{\geq 0}\big\langle a_{D}\mid D\in \mathcal{B}(Y)\big\rangle=\mathbb{R}_{\geq 0}\big\langle a_{D}\mid D\in \mathcal{B}(\widehat{Y})\big\rangle=\\=\mathbb{R}_{\geq 0}\big\langle \{a_{D}\mid D\in \mathcal{G}(\widehat{Y})\cup\mathcal{F}(\widehat{Y})\}\cup\{a_{D}\mid D\in\mathcal{B}(\Omega)\}\big\rangle.
\end{multline*}

Further we have the decomposition $$|\Sigma_{\widehat{Y}}|=(|\Sigma_{\widehat{Y}}|\cap\mathcal{V}_{\mathbb{R}}(\Omega))\cup \overline{|\Sigma_{\widehat{Y}}|\setminus\mathcal{V}_{\mathbb{R}}(\Omega)}=\overline{\mathcal{V}_{\mathbb{R}}(\Omega)\setminus |\Sigma|}\cup \overline{|\Sigma_{\widehat{Y}}|\setminus\mathcal{V}_{\mathbb{R}}(\Omega)}.$$

Since the set of generators of all colored cones of $\Sigma_{\widehat{Y}}$ is the set $$\{a_{D}\mid D\in \mathcal{G}(\widehat{Y})\cup\mathcal{F}(\widehat{Y})\},$$  it follows that $$\mathbb{R}_{\geq 0}\big\langle a_{D}\mid  D\in \mathcal{G}(\widehat{Y})\cup\mathcal{F}(\widehat{Y})\big\rangle=\mathbb{R}_{\geq 0}\big\langle|\Sigma_{\widehat{Y}}|\big\rangle=\mathbb{R}_{\geq 0}\big\langle\overline{\mathcal{V}_{\mathbb{R}}(\Omega)\setminus |\Sigma|}\cup \overline{|\Sigma_{\widehat{Y}}|\setminus\mathcal{V}_{\mathbb{R}}(\Omega)}\big\rangle.$$

Now if a ray $\mathbb{R}_{\geq0}\langle a_{D}\rangle$ intersects with $N_{\mathbb{R}}\setminus\mathcal{V}(\Omega)$ then $D$ is not $G$-stable and thus $D\in\mathcal{F}(\widehat{Y})\subset\mathcal{B}(\Omega)$.

We obtain the following equalities:
\begin{multline*}
C=\mathbb{R}_{\geq 0}\big\langle \{a_{D}\mid D\in \mathcal{G}(\widehat{Y})\cup\mathcal{F}(\widehat{Y})\}\cup\{a_{D}\mid D\in\mathcal{B}(\Omega)\}\big\rangle=\\=
\mathbb{R}_{\geq 0}\big\langle\overline{\mathcal{V}_{\mathbb{R}}(\Omega)\setminus |\Sigma|}\cup \overline{|\Sigma_{\widehat{Y}}|\setminus\mathcal{V}_{\mathbb{R}}(\Omega)}\cup\{a_{D}\mid D\in\mathcal{B}(\Omega)\}\big\rangle=\\=\mathbb{R}_{\geq 0}\big\langle\overline{\mathcal{V}_{\mathbb{R}}(\Omega)\setminus |\Sigma|}\cup \{a_{D}\mid D\in \mathcal{B}(\Omega)\}\big\rangle.
\end{multline*}
The proof of the lemma is complete.
\end{proof}

Thus we obtain the following convex geometric criterion. 

\begin{theorem}\label{ThmC}
Let $X_{\Sigma}$ be a noncompact spherical $G$-variety with the open $G$-orbit $\Omega$ and with the colored fan $\Sigma$ such that $\mathcal{V}_{\mathbb{R}}(\Omega)\setminus |\Sigma|$ is a connected set. Then $X_{\Sigma}$ admits the Hartogs phenomenon if and only if $$\mathbb{R}_{\geq 0}\langle\overline{\mathcal{V}_{\mathbb{R}}(\Omega)\setminus |\Sigma|}\cup \{a_{D}\mid D\in \mathcal{B}(\Omega)\}\rangle=N_\mathbb{R}.$$
\end{theorem}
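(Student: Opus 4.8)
The plan is to deduce the theorem as a clean synthesis of three results already established in this section: Lemma \ref{lemconnect}, Corollary \ref{cormain12}, and Lemma \ref{lemmaonC}. Essentially all of the geometric work has been front-loaded into those statements, so the theorem itself reduces to verifying that their hypotheses line up and chaining the equivalences.

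First I would observe that the standing hypothesis — that $\mathcal{V}_{\mathbb{R}}(\Omega)\setminus|\Sigma|$ is connected — is precisely the condition appearing in Lemma \ref{lemconnect}, which tells us that $X_{\Sigma}$ is a noncompact \emph{(1,0)-compactifiable} spherical $G$-variety. This is exactly the class of varieties to which the convex-geometric criterion applies, so the connectedness assumption is what licenses the use of Corollary \ref{cormain12}. I would state this reduction explicitly, since it is the only place where the topological hypothesis of the theorem is consumed.

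Next I would invoke Corollary \ref{cormain12}: now that $X_{\Sigma}$ is known to be (1,0)-compactifiable, that corollary asserts that $X_{\Sigma}$ admits the Hartogs phenomenon if and only if $C=N_{\mathbb{R}}$, where $C=\mathbb{R}_{\geq0}\langle a_{D}\mid D\in\mathcal{B}(Y)\rangle$. Finally I would substitute the explicit computation of $C$ furnished by Lemma \ref{lemmaonC}, which identifies $C$ with $\mathbb{R}_{\geq0}\langle\overline{\mathcal{V}_{\mathbb{R}}(\Omega)\setminus|\Sigma|}\cup\{a_{D}\mid D\in\mathcal{B}(\Omega)\}\rangle$. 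Combining the two equivalences yields the stated criterion at once.

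Since every nontrivial step has been carried out beforehand, I do not expect a genuine obstacle here; the proof is a two-line deduction. The only point warranting care is bookkeeping: one must confirm that the $Y$ entering Corollary \ref{cormain12} is the same $Y$ whose cone $C$ is computed in Lemma \ref{lemmaonC}, and in particular that the passage from $\mathcal{B}(Y)$ to the colored-fan data through $\widehat{Y}$ is legitimate. This is guaranteed by the earlier identities $\mathbb{C}[Y]=\mathbb{C}[\widehat{Y}]$ and $\mathcal{B}(Y)=\mathcal{B}(\widehat{Y})$ established just before Lemma \ref{lemmaonY}, so no additional argument is needed and the proof can be stated in a single short paragraph.
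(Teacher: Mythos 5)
Your proposal is correct and follows exactly the route the paper intends: the theorem is stated immediately after Lemma \ref{lemmaonC} with the words ``Thus we obtain,'' i.e.\ it is precisely the chain Lemma \ref{lemconnect} (connectedness gives (1,0)-compactifiability) $\Rightarrow$ Corollary \ref{cormain12} ($C=N_{\mathbb{R}}$ criterion) $\Rightarrow$ Lemma \ref{lemmaonC} (identification of $C$). Your bookkeeping remark about $Y$ versus $\widehat{Y}$ is also consistent with the identities $\mathbb{C}[Y]=\mathbb{C}[\widehat{Y}]$ and $\mathcal{B}(Y)=\mathcal{B}(\widehat{Y})$ established in the paper.
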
 

\begin{remark}\label{remspher}
Let $\Omega$ be a noncompact spherical homogeneous $G$-variety. The colored fan of $\Omega$ is $\Sigma=\{(0,\emptyset)\}$. The formula $$\mathbb{R}_{\geq 0}\langle\mathcal{V}_{\mathbb{R}}(\Omega)\cup \{a_{D}\mid D\in \mathcal{B}(\Omega)\}\rangle=N_\mathbb{R}$$ is true for every $\Omega$, because in this case $Y=X_{\Sigma'}$ and $\mathbb{C}[Y]=\mathbb{C}$. Note that the connectedness condition for $\mathcal{V}_{\mathbb{R}}(\Omega)\setminus \{0\}$ is automatically satisfied, except for the case $\mathcal{V}_{\mathbb{R}}(\Omega)=N_{\mathbb{R}}$ and $rk(N)=1$ (i.e. the case of horospherical
homogeneous $G$-varieties of rank one). In this situation, $\Omega$ admits the Hartogs phenomenon. For the case of horospherical
homogeneous $G$-varieties of rank one see Remark \ref{remhorospher} in the next section. 
\end{remark} 

\subsection{The case of horospherical varieties}
\label{s6.1}

First, we recall some notions and notation.

Let $G$ be a complex reductive Lie group, $B\subset G$ be a Borel subgroup, $T\subset B$ be a maximal algebraic torus, $U$ be the unipotent radical of $B$ (it is a maximal unipotent subgroup of $G$). Let $W$ be the Weyl group of $G$ with respect to $T$, let $R$ be the root system of $(G,T)$, $S$ be the simple roots set with respect to $B$. The group $W$ contains root reflections $r_{\alpha}$ ($\alpha\in R$) acting on $\mathfrak{X}(T)$ as $r_{\alpha}(\lambda)=\lambda-\langle \lambda, \alpha^{\vee}\rangle\alpha$ and is generated by reflections corresponding to simple roots.

Note that parabolic subgroups containing a given Borel subgroup $B$ are para\-met\-ri\-zed by subsets of simple roots $I\subset S$. The Lie algebra of the respective
parabolic subgroup $P=P_{I}$ is $$Lie(P)=\mathfrak{t}\oplus\bigoplus\limits_{\alpha\in R_{+}\cup R_I}\mathfrak{g}_{\alpha}$$ where $R_I\subset R$ is the root subsystem spanned by $I$. 

There is a unique Levi decomposition $P=P_{u}\leftthreetimes L$ where $P_{u}$ is the unipotent radical of $P$ and $L=L_{I}\subset P_{I}$ is a unique Levi subgroup containing a given maximal torus $T\subset B$. The root system of $(L_{I},T)$ is $R_{I}$.

The opposite parabolic subgroup $P^{-}=P^{-}_{I}\supset B^{-}$ associated with $I$ intersects $P_{I}$ in $L_I$ and has the Levi decomposition $P^{-}=P^{-}_{u}\leftthreetimes L$ where $P_{u}^{-}$ is the unipotent radical of $P^{-}$.

Now we recall the definition of horospherical variety. 

\begin{definition}\indent
\begin{itemize}
\item A homogeneous $G$-variety $\Omega$ is called horospherical if the stabilizer of any point in $\Omega$ contains a maximal unipotent subgroup of $G$. 
\item A normal almost homogeneous complex algebraic $G$-variety with the open orbit $\Omega$ is called horospherical if $\Omega$ is horospherical. 
\end{itemize}
\end{definition}

Note that a horospherical variety is spherical in the sence of Definition \ref{defspher}. Let $H$ be the stabilizer of any point $o$ in $\Omega$, thus $G/H\cong\Omega, g\mapsto g.o$. We may assume $H\supset U^{-}$. By \cite[Lemma 7.4]{Tim} we have $H=P^{-}_{u}\leftthreetimes L_{0}$ for a certain parabolic subgroup $P\supset B$ (namely, $P$ is such that $P^{-}=N_{G}(H)$) with the Levi subgroup $L\supset L_{0}\supset L'$ (here $L'$ is the commutator subgroup of $L$) and the unipotent radical $P_u$ of $P$. We may assume that $L\supset T$. 

Note that the injective map $\iota\colon M_\RR\hookrightarrow \XX(T)\otimes\RR$ induces the surjective map $$\iota^{*}\colon \XX^{*}(T)\otimes\RR\twoheadrightarrow N_{\RR}:=N\otimes\RR.$$ 

Now, we recall some facts about horospherical homogeneous $G$-varieties (see \cite[Section 28.1]{Tim}).
\begin{remark}\label{rem2}\indent
\begin{itemize}
\item $M\cong\XX(A)$, where $A\cong P^{-}/H\cong L/L_0\cong T/(T\cap L_0)$;
\item $\mathcal{V}(\Omega)=N_{\mathbb{Q}}$;
\item $B$-stable divisors of $\Omega$ are of the form $D_{\alpha}=\overline{Br_{\alpha}o}$, $\alpha\in S\setminus I$, where $I\subset S$ is the simple root set of $L$. Moreover, $\iota^{*}(\alpha^{\vee})=a_{D_{\alpha}}$.
\end{itemize}
\end{remark}

Thus we obtain the following convex geometric criterion for horospherical varieties. 

\begin{corollary}\label{corhorosph2}
Let $X_{\Sigma}$ be a noncompact horospherical $G$-variety with the open $G$-orbit $\Omega$ and with the colored fan $\Sigma$ such that $N_{\mathbb{R}}\setminus |\Sigma|$ is a connected set. Then $X_{\Sigma}$ admits the Hartogs phenomenon if and only if $$\mathbb{R}_{\geq 0}\langle\overline{N_{\mathbb{R}}\setminus |\Sigma|}\cup \iota^{*}((S\setminus I)^{\vee})\rangle=N_\mathbb{R}$$ where $(S\setminus I)^{\vee}=\{\alpha^{\vee}\mid\alpha\in S\setminus I\}$.
\end{corollary}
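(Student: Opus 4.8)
The plan is to specialize the general spherical criterion of Theorem \ref{ThmC} to the horospherical case, where all the abstract invariants become explicit. The starting point is that a horospherical $G$-variety is spherical in the sense of Definition \ref{defspher}, so every structural result of Section \ref{s6} applies verbatim. In particular, Theorem \ref{ThmC} tells us that $X_\Sigma$ admits the Hartogs phenomenon if and only if
\[
\mathbb{R}_{\geq 0}\langle\overline{\mathcal{V}_{\mathbb{R}}(\Omega)\setminus |\Sigma|}\cup \{a_{D}\mid D\in \mathcal{B}(\Omega)\}\rangle=N_{\mathbb{R}},
\]
and the connectedness hypothesis of that theorem is exactly the $(1,0)$-compactifiability condition from Lemma \ref{lemconnect}. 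So the entire task reduces to rewriting each of the three ingredients appearing in Theorem \ref{ThmC} using the horospherical dictionary collected in Remark \ref{rem2}.

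First I would invoke the second bullet of Remark \ref{rem2}, namely $\mathcal{V}(\Omega)=N_{\mathbb{Q}}$, which gives $\mathcal{V}_{\mathbb{R}}(\Omega)=N_{\mathbb{R}}$. Substituting this into both the connectedness hypothesis and the cone formula immediately replaces every occurrence of $\mathcal{V}_{\mathbb{R}}(\Omega)$ by $N_{\mathbb{R}}$: the connectedness condition becomes ``$N_{\mathbb{R}}\setminus|\Sigma|$ is connected,'' and the closure term becomes $\overline{N_{\mathbb{R}}\setminus|\Sigma|}$. Next I would use the third bullet of Remark \ref{rem2}, which identifies the prime $B$-stable divisors of $\Omega$ as exactly the divisors $D_\alpha=\overline{Br_\alpha o}$ indexed by $\alpha\in S\setminus I$, and records the crucial equality $a_{D_\alpha}=\iota^{*}(\alpha^{\vee})$. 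This means the finite set $\{a_D\mid D\in\mathcal{B}(\Omega)\}$ is precisely $\{\iota^{*}(\alpha^{\vee})\mid \alpha\in S\setminus I\}=\iota^{*}((S\setminus I)^{\vee})$ in the notation of the statement. Combining these two substitutions turns the general cone of Theorem \ref{ThmC} into
\[
\mathbb{R}_{\geq 0}\langle\overline{N_{\mathbb{R}}\setminus |\Sigma|}\cup \iota^{*}((S\setminus I)^{\vee})\rangle,
\]
and the criterion ``$=N_{\mathbb{R}}$'' is inherited unchanged, completing the derivation.

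This argument is essentially a transcription, so I do not expect a genuine obstacle; the only point requiring care is bookkeeping consistency. Specifically, one must check that the index set $I\subset S$ appearing in the statement—attached to the parabolic $P$ with $P^{-}=N_G(H)$—is the same $I$ that parametrizes the simple roots of the Levi $L$ in Remark \ref{rem2}, so that $S\setminus I$ correctly enumerates $\mathcal{B}(\Omega)$. This is guaranteed by the setup preceding the corollary, where $H=P_u^{-}\leftthreetimes L_0$ and $L\supset L_0\supset L'$ fix $L$ (hence its simple root set $I$) unambiguously. I would also remark that the degenerate rank-one horospherical case, where $N_{\mathbb{R}}\setminus\{0\}$ fails to be connected, is exactly the exception flagged in Remark \ref{remspher} and is therefore excluded by the connectedness hypothesis, so no separate treatment is needed here. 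With these identifications in place the corollary follows at once from Theorem \ref{ThmC}.
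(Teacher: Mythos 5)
Your proposal is correct and follows exactly the route the paper intends: the corollary is obtained by specializing Theorem \ref{ThmC} via Remark \ref{rem2}, substituting $\mathcal{V}_{\mathbb{R}}(\Omega)=N_{\mathbb{R}}$ and identifying $\{a_{D}\mid D\in\mathcal{B}(\Omega)\}$ with $\iota^{*}((S\setminus I)^{\vee})$ through $a_{D_{\alpha}}=\iota^{*}(\alpha^{\vee})$. Your additional bookkeeping check on the index set $I$ and the remark on the rank-one degenerate case are consistent with the paper's Remarks \ref{rem2} and \ref{remspher}.
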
 

\begin{remark}\label{remhorospher}
Using Corollary \ref{corhorosph2} or Remark \ref{remspher} we obtain that every horospherical homogeneous $G$-variety $\Omega$ with $rk(N)>1$ admits the Hartogs phenomenon. For a horospherical homogeneous $G$-variety $\Omega$ of rank one we have two cases. If $\iota^{*}((S\setminus I)^{\vee})\neq\{0\}$ then there always exists a spherical embedding $X\supset\Omega$ such that $N_{\mathbb{R}}\setminus |\Sigma_{X}|$ is connected and $\mathbb{R}_{\geq 0}\langle\overline{N_{\mathbb{R}}\setminus |\Sigma_{X}|}\cup \iota^{*}((S\setminus I)^{\vee})\rangle=N_\mathbb{R}$. In this case, since $X$ admits the Hartogs phenomenon, then so does $\Omega$. If $\iota^{*}((S\setminus I)^{\vee})=\{0\}$, then $\Omega=\mathbb{C}^{*}\times\Omega_0$, where $\Omega_{0}$ is a compact spherical homogeneous $G$-variety (namely, $\Omega_0\cong G/P^{-}$). In this case $\Omega$ does not admit the Hartogs phenomenon.
\end{remark}
\begin{remark}
From Remarks \ref{remspher} and \ref{remhorospher} we obtain that the Hartogs phenomenon holds for every noncompact spherical homogeneous $G$-variety, except for $\mathbb{C}^{*}\times G/P^{-}$.
\end{remark}

Futher, for $\Omega=G/U^{-}$ we have $I=\emptyset$ and $N=\mathfrak{X}^{*}(T)$; thus we obtain the following

\begin{corollary}\label{cor3}
Let $X_{\Sigma}$ be a horospherical $G$-variety with the open $G$-orbit $\Omega=G/U^{-}$ and with the colored fan $\Sigma$ such that $(\mathfrak{X}^{*}(T)\otimes\mathbb{R})\setminus |\Sigma|$ is a connected set. Then $X_{\Sigma}$ admits the Hartogs phenomenon if and only if $$\mathbb{R}_{\geq 0}\langle\overline{(\mathfrak{X}^{*}(T)\otimes\mathbb{R})\setminus |\Sigma|}\cup \iota^{*}(S^{\vee})\rangle=\mathfrak{X}^{*}(T)\otimes\mathbb{R}.$$
\end{corollary}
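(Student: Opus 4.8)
The plan is to derive Corollary~\ref{cor3} as the special case $I=\emptyset$ of Corollary~\ref{corhorosph2}, so the task is really to verify that the hypotheses specialize correctly and that the indicated substitutions are legitimate. First I would observe that for the homogeneous space $\Omega=G/U^{-}$ the stabilizer is $H=U^{-}$, which is the unipotent radical $P^{-}_{u}$ of the \emph{Borel} subgroup $B^{-}$; hence the associated parabolic is $P=B$, and since parabolics containing $B$ correspond to subsets $I\subset S$, the Borel itself corresponds to $I=\emptyset$. This is the key identification: the Levi subgroup $L$ of $P=B$ is just the torus $T$, so its root system $R_{I}$ is empty and $I=\emptyset$ as claimed.

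Next I would compute the weight lattice and its dual. From Remark~\ref{rem2}, $M\cong\mathfrak{X}(A)$ with $A\cong T/(T\cap L_{0})$; for $\Omega=G/U^{-}$ we have $L_{0}=L'=\{e\}$ (the commutator subgroup of $L=T$ is trivial), so $A\cong T$ and therefore $M\cong\mathfrak{X}(T)$. Dualizing gives $N=\mathrm{Hom}(M,\mathbb{Z})\cong\mathfrak{X}^{*}(T)$, which is precisely the identification $N=\mathfrak{X}^{*}(T)$ stated before the corollary, so that $N_{\mathbb{R}}=\mathfrak{X}^{*}(T)\otimes\mathbb{R}$. Under these identifications the map $\iota\colon M_{\mathbb{R}}\hookrightarrow\mathfrak{X}(T)\otimes\mathbb{R}$ becomes an isomorphism, and consequently $\iota^{*}\colon\mathfrak{X}^{*}(T)\otimes\mathbb{R}\to N_{\mathbb{R}}$ is the identity map.

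With these specializations in hand, the substitution into Corollary~\ref{corhorosph2} is routine. Since $I=\emptyset$ we have $S\setminus I=S$, so the set of dual simple roots becomes $(S\setminus I)^{\vee}=S^{\vee}$, and by the third item of Remark~\ref{rem2} the $B$-stable divisors of $\Omega$ are exactly $D_{\alpha}$ for $\alpha\in S$ with $\iota^{*}(\alpha^{\vee})=a_{D_{\alpha}}$. Replacing $N_{\mathbb{R}}$ by $\mathfrak{X}^{*}(T)\otimes\mathbb{R}$ throughout and $(S\setminus I)^{\vee}$ by $S^{\vee}$ in both the connectedness hypothesis and the cone-generation criterion yields precisely the statement of Corollary~\ref{cor3}. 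The connectedness condition $N_{\mathbb{R}}\setminus|\Sigma|$ becomes $(\mathfrak{X}^{*}(T)\otimes\mathbb{R})\setminus|\Sigma|$, which matches the hypothesis, and this condition is exactly the $(1,0)$-compactifiability criterion of Lemma~\ref{lemconnect} specialized to the horospherical case where $\mathcal{V}_{\mathbb{R}}(\Omega)=N_{\mathbb{R}}$.

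I do not expect a serious obstacle here, since the corollary is a direct specialization rather than a new argument. The only point requiring a little care is confirming that $L_{0}$ collapses to the trivial group for $\Omega=G/U^{-}$ (so that $A\cong T$ and $M\cong\mathfrak{X}(T)$ in full), rather than to some intermediate subgroup; this follows from $H=U^{-}=P^{-}_{u}\leftthreetimes\{e\}$ forcing $L_{0}=\{e\}$ in the decomposition $H=P^{-}_{u}\leftthreetimes L_{0}$ of \cite[Lemma 7.4]{Tim}. Once this identification is fixed, everything else is a transparent relabeling of the general horospherical criterion.
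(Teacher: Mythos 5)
Your proposal is correct and follows exactly the paper's route: the paper obtains Corollary~\ref{cor3} as the immediate specialization of Corollary~\ref{corhorosph2} to $\Omega=G/U^{-}$, noting only that $I=\emptyset$ and $N=\mathfrak{X}^{*}(T)$. Your additional verification that $H=U^{-}$ forces $P=B$, $L_{0}=\{e\}$, and hence $M\cong\mathfrak{X}(T)$ just makes explicit what the paper leaves implicit.
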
 

\section{Examples}

In this section we consider examples of horospherical varieties with the open orbit $(SL(2)\times\mathbb{C}^{*})/U^{-}$ where $U^{-}$ is the maximal unipotent subgroup of $SL(2)\times\mathbb{C}^{*}$ consisting of lower triangular matrices with ones on the diagonal. 

\subsection{Some calculations for $(SL(2)\times\mathbb{C}^{*})/U^{-}$}\label{calc1}\indent

Consider $G=SL(2)\times\mathbb{C}^{*}=\big\{ \left(\begin{smallmatrix}
a_{11} & a_{12} & 0\\
a_{21} & a_{22} & 0\\
0& 0 & a_{33}
\end{smallmatrix} \right)\mid a_{11}a_{22}-a_{12}a_{21}=1 ,a_{33}\in\mathbb{C}^{*}\big\}$, and let $$B=\big\{\left(\begin{smallmatrix}
a_{11} & a_{12} & 0\\
0 & a_{22} & 0\\
0& 0 & a_{33}
\end{smallmatrix} \right)\mid a_{11}a_{22}=1 ,a_{33}\in\mathbb{C}^{*}\big\}.$$

Consider the following subgroups of $G$:
\begin{itemize}
\item the maximal torus: $$T=\big\{\left(\begin{smallmatrix}
a_{11} & 0 & 0\\
0 & a_{22} & 0\\
0& 0 & a_{33}
\end{smallmatrix} \right)\mid a_{11}a_{22}=1 ,a_{33}\in\mathbb{C}^{*}\big\}$$
\item the opposite Borel subgroup: $$B^{-}=\big\{\left(\begin{smallmatrix}
a_{11} & 0 & 0\\
a_{21} & a_{22} & 0\\
0& 0 & a_{33}
\end{smallmatrix} \right)\mid a_{11}a_{22}=1 ,a_{33}\in\mathbb{C}^{*}\big\}$$
\item the maximal unipotent subgroup: $$U=\big\{\left(\begin{smallmatrix}
1 & a_{12} & 0\\
0 & 1 & 0\\
0& 0 & 1
\end{smallmatrix} \right)\big\}$$
\item the opposite maximal unipotent subgroup: 
$$U^{-}=\big\{\left(\begin{smallmatrix}
1 & 0 & 0\\
a_{21} & 1 & 0\\
0& 0 & 1
\end{smallmatrix} \right)\big\}$$
\end{itemize}

Note that $\XX(T)=\mathbb{Z}^{2}$. Denoting $t=\left(\begin{smallmatrix}
t_{1} & 0 & 0\\
0 & t_{2} & 0\\
0& 0 & t_{3}
\end{smallmatrix} \right)$, we obtain that each character $\lambda\in\XX(T)$ is given by $\lambda(t)=t_{1}^{l}t_{3}^{m}$ for some $(l,m)\in \mathbb{Z}^{2}$. 

For the Lie algebra $\mathfrak{g}=Lie(G)$ we have the following root decomposition: $$\mathfrak{g}=\mathfrak{t}\oplus\mathfrak{g}_{\alpha_{12}}\oplus\mathfrak{g}_{\alpha_{21}}$$ where 

\begin{itemize}
\item $\mathfrak{g}_{\alpha_{12}}=\mathbb{C}E_{12}$, where $E_{12}=\left(\begin{smallmatrix}
0 & 1 & 0\\
0 & 0 & 0\\
0 & 0 & 0
\end{smallmatrix} \right)$, and $\alpha_{12}\in\XX(T)$ given by $$\alpha_{12}(t)=t_{1}^{2}, \text{ i.e. } \alpha_{12}=(2,0).;$$
\item$\mathfrak{g}_{\alpha_{21}}=\mathbb{C}E_{21}$, where $E_{21}=\left(\begin{smallmatrix}
0 & 0 & 0\\
1 & 0 & 0\\
0 & 0 & 0
\end{smallmatrix} \right)$, and $\alpha_{21}\in\XX(T)$ given by $$\alpha_{12}(t)=t_{1}^{-2}, \text{ i.e. } \alpha_{21}=(-2,0).$$
\end{itemize}

The root system of $G$ is $R=\{\alpha_{12},\alpha_{21}\}$. Since $Lie(B)=\mathfrak{t}\oplus \mathbb{C}E_{12}$, it follows that the subset of simple roots with respect to $B$ is $S=\{\alpha_{12}\}$. 

The Weyl group of $G$ with respect to $T$ is $W=N_{G}(T)/T=\{e, r_{12}\}$ where $r_{12}$ is the reflection which is represented by the matrix $U=E_{12}-E_{21}+E_{33}\in N_{G}(T)$. 

Now, we consider the homogeneous $G$-variety $G/U^{-}$. We have the isomorphism $\phi\colon G/U^{-}\cong \mathbb{C}^{2}\setminus\{(0,0)\}\times\mathbb{C}^{*}$ induced by the map $$G\to \mathbb{C}^{2}\setminus\{(0,0)\}\times\mathbb{C}^{*}, \left(\begin{smallmatrix}
a_{11} & a_{12} & 0\\
 a_{21} & a_{22} & 0\\
0& 0 & a_{33}
\end{smallmatrix} \right)\mapsto (a_{12},a_{22},a_{33}).$$

The isomorphism $\phi$ is $G$-equivariant with respect to the left multiplication action $G$ on $G/U^{-}$ and the standard action $G$ on $\mathbb{C}^{2}\setminus\{(0,0)\}\times\mathbb{C}^{*}$. 

Using the isomorphism $\phi$, we obtain that $B$-orbits of $G/U^{-}$ are of the form:

\begin{itemize}
\item unique $B$-stable divisor $$D_{12}=\{(x_{1},x_{2},w)\in \mathbb{C}^{2}\setminus\{(0,0)\}\times\mathbb{C}^{*})\mid x_{2}=0\};$$ 
\item open $B$-orbit $$O=\{(x_{1},x_{2},w)\in \mathbb{C}^{2}\setminus\{(0,0)\}\times\mathbb{C}^{*}\mid x_{2}\neq0\}.$$
\end{itemize}

Now we calculate the weight lattice. The algebra of regular functions on $\Omega$ is the following $$\mathbb{C}[\Omega]=\mathbb{C}[\mathbb{C}^{2}\setminus\{(0,0)\}\times\mathbb{C}^{*}]=\bigoplus\limits_{(i,j,k)\in(\mathbb{Z}_{\geq0})^{2}\times\mathbb{Z}}\mathbb{C}x_{1}^{i}x_{2}^{j}w^{k}.$$ It follows that $x_{2}^{l}w^{-m}\in\mathbb{C}(\Omega)^{(B)}_{(l,m)}$ where $\lambda=(l,m)\in \XX(T)=\mathbb{Z}^{2}$.

Thus we see that $M=\XX(T)$ (of course, we may apply Remark \ref{rem2}; in this case, $P=B,H=U^{-}$ and $A\cong T$). 

The $B$-stable divisor $D_{12}$ defines the discrete valuation $$v_{12}\colon \mathbb{C}(\Omega)\setminus\{0\}\to\mathbb{Z}$$ and the point $a_{D_{12}}\in N$ by the formula $\langle a_{D_{12}},(l,m)\rangle:=v_{D_{12}}(x_{2}^{l}w^{-m})=l$. This means that $a_{D_{12}}=(1,0)$.

Let $(,)$ be a standard scalar product in $\XX(T)\otimes\mathbb{R}$. The dual simple root $\alpha_{12}^{\vee}\in \XX^{*}(T)\otimes\mathbb{R}$ given by formula $$\langle\alpha_{12}^{\vee},\lambda\rangle=\frac{2(\alpha_{12},\lambda)}{(\alpha_{12},\alpha_{12})}$$ for all $\lambda\in\XX(T)$ It follows that $\alpha_{12}^{\vee}=(1,0)$. Thus $\alpha_{12}^{\vee}=a_{D_{12}}$ (which actually follows from Remark \ref{rem2}).

\subsection{$(SL(2)\times \CC^{*})/U^{-}$-embeddings}\indent

Using Corollary \ref{cor3} we obtain the following criterion

\begin{corollary}\label{crit}
Let $X_{\Sigma}$ be a horospherical variety with the open orbit $(SL(2)\times\CC^{*})/U^{-}$ and with a colored fan $\Sigma$ such that $\mathbb{R}^{2}\setminus |\Sigma|$ is connected. Then $X_{\Sigma}$ admits the Hartogs phenomenon if and only if $$\mathbb{R}_{\geq 0}\langle \overline{\mathbb{R}^{2}\setminus |\Sigma|}\cup \{\alpha_{12}^{\vee}\}\rangle=\mathbb{R}^{2}.$$
\end{corollary}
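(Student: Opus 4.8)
The plan is to derive Corollary~\ref{crit} as a direct specialization of Corollary~\ref{cor3} to the concrete group $G=SL(2)\times\CC^{*}$ with open orbit $\Omega=G/U^{-}$. Since Corollary~\ref{cor3} is already stated and proved for arbitrary $G$ with $\Omega=G/U^{-}$, the entire task reduces to substituting the explicit data computed in Subsection~\ref{calc1} into the general formula. The general criterion reads
$$\mathbb{R}_{\geq 0}\langle\overline{(\XX^{*}(T)\otimes\mathbb{R})\setminus |\Sigma|}\cup \iota^{*}(S^{\vee})\rangle=\XX^{*}(T)\otimes\mathbb{R},$$
so I must identify each ingredient appearing here for the present group.

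First I would record from Subsection~\ref{calc1} that for $G=SL(2)\times\CC^{*}$ we have $\XX(T)=\mathbb{Z}^{2}$, hence $M=\XX(T)=\mathbb{Z}^{2}$ and $N=\XX^{*}(T)=\mathbb{Z}^{2}$, so that $N_{\RR}=\XX^{*}(T)\otimes\RR=\mathbb{R}^{2}$. This justifies replacing $\XX^{*}(T)\otimes\mathbb{R}$ by $\mathbb{R}^{2}$ throughout, and in particular replacing the connectedness hypothesis $(\XX^{*}(T)\otimes\mathbb{R})\setminus|\Sigma|$ by $\mathbb{R}^{2}\setminus|\Sigma|$. Next I would use the explicit simple root set $S=\{\alpha_{12}\}$ computed in Subsection~\ref{calc1}. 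Since $\Omega=G/U^{-}$ corresponds to $I=\emptyset$, we have $S\setminus I=S=\{\alpha_{12}\}$, so the set $S^{\vee}$ appearing in Corollary~\ref{cor3} is simply $\{\alpha_{12}^{\vee}\}$.

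The only remaining point is to interpret $\iota^{*}(S^{\vee})=\iota^{*}(\{\alpha_{12}^{\vee}\})$. Because $M=\XX(T)$, the injection $\iota\colon M_{\RR}\hookrightarrow\XX(T)\otimes\RR$ is the identity, and therefore its dual $\iota^{*}\colon\XX^{*}(T)\otimes\RR\twoheadrightarrow N_{\RR}$ is likewise the identity. Consequently $\iota^{*}(\alpha_{12}^{\vee})=\alpha_{12}^{\vee}$, and I would cite the explicit computation $\alpha_{12}^{\vee}=(1,0)=a_{D_{12}}$ from Subsection~\ref{calc1} to confirm the identification is consistent with the divisor data. Substituting $\iota^{*}(S^{\vee})=\{\alpha_{12}^{\vee}\}$ into the general formula then yields exactly
$$\mathbb{R}_{\geq 0}\langle\overline{\mathbb{R}^{2}\setminus|\Sigma|}\cup\{\alpha_{12}^{\vee}\}\rangle=\mathbb{R}^{2},$$
which is the assertion of Corollary~\ref{crit}.

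I do not anticipate a genuine obstacle here, since this is a verification that the abstract lattices and maps of Corollary~\ref{cor3} collapse to trivial identifications in the rank-two example. The only thing requiring a moment of care is checking that $\iota$ really is an isomorphism (equivalently $M=\XX(T)$) so that $\iota^{*}$ may be suppressed; this is guaranteed by the computation $M=\XX(T)$ in Subsection~\ref{calc1}, which in turn follows from Remark~\ref{rem2} with $P=B$, $H=U^{-}$, $A\cong T$. Once that identification is in place, the corollary is immediate.
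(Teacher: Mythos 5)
Your proposal is correct and matches the paper's own (essentially unwritten) argument: the paper simply states that Corollary~\ref{crit} follows from Corollary~\ref{cor3} together with the computations of Subsection~\ref{calc1}, namely $M=\XX(T)=\mathbb{Z}^{2}$, $N_{\RR}=\mathbb{R}^{2}$, $S=\{\alpha_{12}\}$, $I=\emptyset$, and $\iota^{*}(\alpha_{12}^{\vee})=\alpha_{12}^{\vee}=(1,0)=a_{D_{12}}$. Your explicit verification that $\iota$, and hence $\iota^{*}$, is the identity is exactly the point that makes the substitution legitimate.
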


Now we construct examples of horospherical varieties and the corresponding colored fans. We apply Corollary \ref{crit}.

\begin{remark}
The standard action of $G=SL(2)$ on $\mathbb{C}^{2}$ induces the action of $SL(2)$ on the projective space $\mathbb{P}^{2}=\mathbb{P}(\mathbb{C}\oplus\mathbb{C}^{2})$ which is given by the formula $$\left(\begin{smallmatrix}
a_{11} & a_{12}\\
a_{21} & a_{22}
\end{smallmatrix} \right).[z_{0}:z_{1}:z_{2}]:=[z_{0}:(a_{11}z_{1}+a_{12}z_{2}):(a_{21}z_{1}+a_{22}z_{2})].$$ 

By $B$ we denote the subgroup of $G$ consisting of upper triangular matrices. 

Note that $G$-orbits of this action are the following:
\begin{itemize}
\item $G$-orbit $H_{\infty}=\{[z_{0}:z_{1}:z_2]\in\mathbb{P}^{2}\mid z_{0}=0\}$. It is a unique $G$-stable divisor;
\item $G$-orbit $H_{0}=\{[1:0:0]\}$;
\item the open $G$-orbit $\mathbb{P}^{2}\setminus(H_{\infty}\cup H_{0})\cong\mathbb{C}^{2}\setminus\{(0,0)\}$;
\end{itemize}

Also we have a $B$-stable but not a $G$-stable divisor $$H_{12}=\{[z_{0}:z_{1}:z_2]\in\mathbb{P}^{2}\mid z_{2}=0\}.$$ 

Note that the open $B$-orbit is $\mathbb{P}^{2}\setminus(H_{\infty}\cup H_{0}\cup H_{12})\cong\mathbb{C}^{2}\setminus\{x_{2}=0\}$ (where $x_{2}=\frac{z_{2}}{z_{0}}$).

\end{remark}

\begin{remark}
Consider the toric action of $\mathbb{C}^{*}$ on $\mathbb{P}^{1}$ which is given by the formula $$t.[w_{0}:w_{1}]=[w_{0}:tw_{1}].$$

In this case, we have the following $\mathbb{C}^{*}$-orbits:
\begin{itemize}
\item The open $\mathbb{C}^{*}$-orbit is $\mathbb{P}^{1}\setminus\{[1:0],[0:1]\}\cong\mathbb{C}^{*}=\mathbb{C}\setminus\{w=0\}$ (where $w=\frac{w_1}{w_0}$);
\item $\mathbb{C}^{*}$-divisors are $[1:0], [0:1]$.
\end{itemize}

\end{remark}

Now we consider the compact variety $X'=\mathbb{P}^{2}\times\mathbb{P}^{1}$ with the action of $SL(2)\times\mathbb{C}^{*}$ given by the formula
\begin{multline*}
\left(\begin{smallmatrix}
a_{11} & a_{12} & 0\\
a_{21} & a_{22} & 0\\
0 & 0 & a_{33}
\end{smallmatrix} \right).([z_{0}:z_{1}:z_{2}],[w_{0}:w_{1}])=\\=([z_{0}:(a_{11}z_{1}+a_{12}z_{2}):(a_{21}z_{1}+a_{22}z_{2})],[w_{0}:a_{33}w_{1}]).
\end{multline*}

In this case, the open $SL(2)\times\mathbb{C}^{*}$-orbit is $\mathbb{C}^{2}\setminus\{(0,0)\}\times\mathbb{C}^{*}\cong SL(2)\times\mathbb{C}^{*}/U^{-}$ and the open $B$-orbit is $\mathbb{C}^{2}\setminus\{(x_{2}=0)\}\times\mathbb{C}^{*}$.

We have the followng $SL(2)\times\mathbb{C}^{*}$-orbits which are not open:

\begin{itemize}
\item 0-dimensional: $H_{0}\times[1:0]$, $H_{0}\times[0:1]$;
\item 1-dimensional: $H_{\infty}\times [1:0], H_{\infty}\times [0:1]$, $H_{0}\times\mathbb{C}^{*}$;
\item 2-dimensional: $H_{\infty}\times\mathbb{C}^{*}, \mathbb{C}^{2}\setminus\{(0,0)\}\times[1:0]$, $\mathbb{C}^{2}\setminus\{(0,0)\}\times[0:1]$;
\end{itemize}

Also we have the following $B$-stable divisors: 

\begin{itemize}
\item $SL(2)\times\mathbb{C}^{*}$-stable divisors: $D_{\infty}:=H_{\infty}\times\mathbb{P}^{1}, D_{10}:=\mathbb{P}^{2}\times[1:0], D_{01}:=\mathbb{P}^{2}\times[0:1]$;
\item not $SL(2)\times\mathbb{C}^{*}$-stable divisor: $D_{12}:=H_{12}\times\mathbb{P}^{1}$.
\end{itemize}

Now we calculate points $a_{D_\infty},a_{D_{10}},a_{D_{01}}, a_{D_{12}}$ in $N=\mathbb{Z}^{2}$ corresponding to the $B$-stable divisors $D_{\infty}, D_{10}, D_{01}, D_{12}$. 

Consider a character $\lambda=(l,m)\in M=\mathbb{Z}^{2}$. We have the following: 
\begin{itemize}
\item $\langle a_{D_\infty},(l,m)\rangle=v_{D_{\infty}}(x_{2}^{l}w^{-m})=v_{D_{\infty}}((\frac{z_2}{z_0})^{l}w^{-m})=-l$; thus $a_{D_\infty}=(-1,0)$;
\item $\langle a_{D_{10}},(l,m)\rangle=v_{D_{10}}(x_{2}^{l}w^{-m})=v_{D_{10}}(x_2^{l}(\frac{w_1}{w_0})^{-m})=m$; thus $a_{D_{10}}=(0,1)$
\item $\langle a_{D_{01}},(l,m)\rangle=v_{D_{01}}(x_2^{l}(\frac{w_1}{w_0})^{-m})=-m$; thus $a_{D_{10}}=(0,-1)$
\item $\langle a_{D_{12}},(l,m)\rangle=v_{D_{12}}(x_{2}^{l}w^{-m})=v_{D_{12}}((\frac{z_2}{z_0})^{l}w^{-m})=l$; thus $a_{D_{12}}=(1,0)=\alpha_{12}^{\vee}$;
\end{itemize}

So, we obtain the colored fan $\Sigma'$ of $X'$ consisting of the following colored cones (see Figure \ref{3fan}): 
\begin{itemize}
\item 0-dimensional cone: $((0,0),\emptyset)$;
\item 1-dimensional cones: $(\mathbb{R}_{\geq0}\langle (-1,0)\rangle, \emptyset)$, $(\mathbb{R}_{\geq0}\langle (0,-1)\rangle, \emptyset)$, $(\mathbb{R}_{\geq0}\langle (0,1)\rangle, \emptyset)$, $(\mathbb{R}_{\geq0}\langle (1,0)\rangle, D_{12})$; 
\item 2-dimensional cones: $(\mathbb{R}_{\geq0}\langle (-1,0),(0,1)\rangle, \emptyset)$, $(\mathbb{R}_{\geq0}\langle (-1,0),(0,-1)\rangle, \emptyset)$, 

$(\mathbb{R}_{\geq0}\langle (1,0), (0,1)\rangle, D_{12})$, 
$(\mathbb{R}_{\geq0}\langle (1,0), (0,-1)\rangle, D_{12})$; 
\end{itemize}
\begin{figure}[H]
\begin{center}
\begin{minipage}[H]{0.3\linewidth}
\begin{center}
		\begin{tikzpicture}[scale=0.7]
		\draw[help lines] (-3,-3) grid (3,3);
		\draw (-3,0) coordinate (A) - - (0,0) coordinate (B) -- (0,-3) coordinate (C) pic [pattern=horizontal lines ,angle radius=2cm] {angle};
		\draw (0,-3) coordinate (A) - - (0,0) coordinate (B) -- (3,0) coordinate (C) pic [pattern=vertical lines ,angle radius=2cm] {angle};
		\draw (3,0) coordinate (A) - - (0,0) coordinate (B) -- (0,3) coordinate (C) pic [pattern=dots ,angle radius=2cm] {angle};
		\draw (0,3) coordinate (A) - - (0,0) coordinate (B) -- (-3,0) coordinate (C) pic [pattern=north west lines ,angle radius=2cm] {angle};
		\fill[fill=blue](1,0) circle (4pt) node[above right, blue] {$\alpha_{12}^{\vee}$};
		\end{tikzpicture}
\end{center}
\end{minipage}
\caption{Colored fan $\Sigma'$ of $\mathbb{P}^{2}\times\mathbb{P}^{1}$. \label{3fan}}
\end{center}
\end{figure}

Now we delete from $X'$ some $SL(2)\times\mathbb{C}^{*}$-stable divisors to obtain a noncompact horospherical variety $X$ and we apply for $X$ the convex geometric criterion in Corollary \ref{crit}. 

\begin{itemize}
\item For $X:= X'\setminus D_{\infty}\cong \mathbb{C}^{2}\times\mathbb{P}^{1}$ we have the colored fan $\Sigma$ as in Figure \ref{3fan1}. Since $\mathbb{R}_{\geq0}\langle\overline{\mathbb{R}^{2}\setminus |\Sigma|}\cup\{\alpha_{12}^{\vee}\}\rangle=\mathbb{R}^{2}$, it follows that $X$ admits the Hartogs phenomenon.  

\begin{figure}[H]
\begin{center}
\begin{minipage}[H]{0.3\linewidth}
\begin{center}
		\begin{tikzpicture}[scale=0.7]
		\draw[help lines] (-3,-3) grid (3,3);
		\draw (0,-3) coordinate (A) - - (0,0) coordinate (B) -- (3,0) coordinate (C) pic [pattern=vertical lines ,angle radius=2cm] {angle};
		\draw (3,0) coordinate (A) - - (0,0) coordinate (B) -- (0,3) coordinate (C) pic [pattern=dots ,angle radius=2cm] {angle};
		\fill[fill=blue](1,0) circle (4pt) node[above right, blue] {$\alpha_{12}^{\vee}$};
		\end{tikzpicture}
\end{center}
\end{minipage}
\caption{Colored fan $\Sigma$ of $\mathbb{C}^{2}\times\mathbb{P}^{1}$. \label{3fan1}}
\end{center}
\end{figure}

\item For $X:= X'\setminus D_{10}\cong \mathbb{P}^{2}\times\mathbb{C}^{1}$ we have the colored fan $\Sigma$ as in Figure \ref{3fan2}. Since $\mathbb{R}_{\geq0}\langle\overline{\mathbb{R}^{2}\setminus |\Sigma|}\cup\{\alpha_{12}^{\vee}\}\rangle\neq\mathbb{R}^{2}$, it follows that $X$ does not admit the Hartogs phenomenon.  

\begin{figure}[H]
\begin{center}
\begin{minipage}[H]{0.3\linewidth}
\begin{center}
		\begin{tikzpicture}[scale=0.7]
		\draw[help lines] (-3,-3) grid (3,3);
		\draw (-3,0) coordinate (A) - - (0,0) coordinate (B) -- (0,-3) coordinate (C) pic [pattern=horizontal lines ,angle radius=2cm] {angle};
		\draw (0,-3) coordinate (A) - - (0,0) coordinate (B) -- (3,0) coordinate (C) pic [pattern=vertical lines ,angle radius=2cm] {angle};
		\fill[fill=blue](1,0) circle (4pt) node[above right, blue] {$\alpha_{12}^{\vee}$};
		\end{tikzpicture}
\end{center}
\end{minipage}
\caption{Colored fan $\Sigma$ of $\mathbb{P}^{2}\times\mathbb{C}$. \label{3fan2}}
\end{center}
\end{figure}

\end{itemize}

\end{document}